\setlist[itemize]{noitemsep, topsep=-10pt}
\setlist[enumerate]{noitemsep, topsep=-10pt}
\theoremstyle{plain}
\newtheorem{thm}{Theorem}[section]
\newtheorem{lem}[thm]{Lemma}
\newtheorem{prop}[thm]{Proposition}
\newtheorem{cor}[thm]{Corollary}
\theoremstyle{definition}
\newtheorem{defn}[thm]{Definition}
\newtheorem{question}[thm]{Question}
\theoremstyle{remark}
\newtheorem{rem}[thm]{Remark}
\newtheorem*{note}{Note}
\DeclareMathOperator{\rank}{rank}
\DeclareMathOperator{\core}{Core}
\renewenvironment{proof}[1][\proofname]{\par
  \vspace{-\topsep}
  \pushQED{\qed}%
  \normalfont
  \topsep0pt \partopsep0pt 
  \trivlist
  \item[\hskip\labelsep
        \itshape
    #1\@addpunct{.}]\ignorespaces
}{%
  \popQED\endtrivlist\@endpefalse
  \addvspace{6pt plus 6pt} 
}
\let\csname pgf@arrow@code@cm |\endcsname\relax
\title{Investigating transversals as generating sets for groups}
\author{Maurice Chiodo, Robert Crumplin, Oscar Donlan, Pawe\l{} Piwek}
\date{\today}
\begin{document}

\newcommand{\todo}[1]{\color{red}(\textbf{TODO: #1})\color{black}}
\newcommand{\think}[1]{\color{blue}(\textbf{THINK: #1})\color{black}}
\newcommand{\set}[1]{\{#1\}}
\newcommand{\h}{0}
\newcommand{\w}{0}

\newcommand{\gen}[1]{\langle #1 \rangle}

\renewcommand*{\le}{\leqslant}
\renewcommand*{\leq}{\leqslant}
\renewcommand*{\ge}{\geqslant}
\renewcommand*{\geq}{\geqslant}

\maketitle
        \begin{abstract}
        In \cite{transgen} is was shown that for any group $G$ whose rank (i.e., minimal number of generators) is at most 3,  and any finite index subgroup $H\leq G$ with index $[G:H]\geq \rank(G)$, one can always find a left-right transversal of $H$ which generates $G$. In this paper we extend this result to groups of rank at most 4. We also extend this to groups $G$ of arbitrary (finite) rank $r$ provided all the non-trivial divisors of $[G:\core_G(H)]$ are at least $2r-1$. Finally, we extend this to groups $G$ of arbitrary (finite) rank provided $H$ is malnormal in $G$.
        \end{abstract}
    
{\let\thefootnote\relax\footnotetext{2010 \textit{AMS Classification:} 20E99, 20F05}}
{\let\thefootnote\relax\footnotetext{\textit{Keywords:} Transversals, generating sets, finite index subgroups.}}
{\let\thefootnote\relax\footnotetext{The second author was partially funded by an Undergraduate Summer Internship Scheme from King's College, Cambridge, and a Summer Research in Mathematics Bursary from the Faculty of Mathematics, Cambridge.}}
{\let\thefootnote\relax\footnotetext{The third author was partially supported by the Beker Fund from Selwyn College, Cambridge, and a Summer Research in Mathematics Bursary from the Faculty of Mathematics, Cambridge.}}
{\let\thefootnote\relax\footnotetext{The fourth author was partially funded by an Undergraduate Summer Internship Scheme from King's College, Cambridge, a Summer Research in Mathematics Bursary from the Faculty of Mathematics, Cambridge, and a London Mathematical Society
Undergraduate Research Bursary in Mathematics.}}

    \section{Introduction}
    
Given a group $G$ and a subgroup $H \leq G$ of finite index, one can consider  sets $S \subset G$ which are a \emph{left} (resp. \textit{right}) \textit{ transversals} of $H$ in $G$. That is, a complete set of left (resp. right) coset representatives. It then follows that we can consider \emph{left-right transversals}; sets which are simultaneously a left transversal, and right transversal, for $H$ in $G$.  While it is immediate in the case of finite index subgroups that left (resp. right) transversals exist (the case of infinite index subgroups is more complicated; existence of left or right transversals is equivalent to the axiom of choice, as shown in \cite[Theorem 2.1]{choice}), one might then ask the following question:

\begin{question}
Given a finitely generated group $G$ and a finite index subgroup $H$, does there always exist a left-right transversal of $H$ in $G$?
\end{question}

It turns out that such a transversal always exists, by an application (see \cite{cosetgraph}) of Hall's Marriage Theorem to the \emph{coset intersection graph} $\Gamma_{H,H}^{G}$ of $H$ in $G$ (\Cref{defn:coset-intersection}); a graph whose vertex set is the disjoint union of the left and the right cosets of $H$ in $G$, with edges between vertices whenever the corresponding cosets intersect. $\Gamma_{H, H}^{G}$ is thus bipartite as the set of left (and right) cosets are mutually disjoint. A proof without using Hall's Theorem was given in \cite[Theorem 3]{cosetgraph}.

In the context of a finitely generated group $G$, by defining $\rank(G)$ to be the smallest size of a generating set for $G$,  one might ask the following question relating generating sets to transversals:

\begin{question}
Given a finitely generated group $G$ and a finite index subgroup $H$ with $\rank(G)\leq[G:H]$, does there always exist a left transversal of $H$ which generates $G$?
\end{question}

It is also true that such a transversal always exists, as shown in \cite[Theorem 3.7]{transgen}. And clearly the reverse statement is true: if there exists a left transversal of $H$ which generates $G$ then $\rank(G)\leq [G:H]$. So we now have a necessary and sufficient condition for a finite index subgroup $H$ of a finitely generated group $G$ to have a left transversal which generates $G$; namely, that $\rank(G)\leq [G:H]$. Moreover, by taking inverses, and noting that the element-wise inverse of a left transversal is a right transversal, we see that the same condition holds for the existence of right transversals as generating sets. Combining all the observations so far, one might now consider the following more general question, which forms the main motivation of this paper:

\begin{question}\label{ques: motivating}
Given a finitely generated group $G$ and a finite index subgroup $H$ with $\rank(G)\leq[G:H]$, does there always exist a left-right transversal of $H$ which generates $G$?
\end{question}

This question was first studied in \cite{transgen},  where it was shown in \cite[Theorem 3.11]{transgen} that such a transversal always exists under the additional hypothesis that $\rank(G)\leq 3$. To achieve this, the authors introduced a new technique called \textit{shifting boxes}. This involves using the transitive action of a group $G$ on the set of left (or right) cosets of a subgroup $H \leq G$ to apply Nielsen transformations to a generating set of $G$ in a way such that the resulting generators lie inside (or outside) particular desired cosets of $H$. A study of the graph $\Gamma_{H,H}^{G}$ was conducted in \cite{cosetgraph}, and it was shown that the components are always complete bipartite graphs. This description of $\Gamma_{H,H}^{G}$ gave rise to a combinatorial model of coset intersections known as `chessboards'. Chessboards provide an approach to answering versions of \Cref{ques: motivating}, as indeed was done in \cite{transgen}, and which we do throughout this paper. In particular, we use these techniques, as done in \cite[Theorem 3.11]{transgen}, to relax the hypothesis of $\rank(G)\leq 3$, up to $\rank(G)\leq 4$. That is, we show as the first main result of this paper that:

\begin{thm}\label{thm: rank4case}
Let $G$ be a group of rank $4$ with a finite index subgroup $H$, such that $[G:H] \geq 4$. Suppose $S$ is a set of $4$ elements which generate $G$, then there exists a sequence of Nielsen transformations taking $S$ to a new set $\tilde{S}$, such that the elements of $\tilde{S}$ may be extended to a left-right transversal of $H$ in $G$.
\\Hence, given a finitely generated group $G$ of rank $4$ and a finite index subgroup $H$, there exists a left-right transversal of $H$ which generates $G$ if and only if $\rank(G)\leq[G:H]$.
\end{thm}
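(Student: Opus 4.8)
The plan is to mimic the strategy that worked for $\rank(G) \le 3$ in \cite[Theorem 3.11]{transgen}, but with the extra bookkeeping forced by a fourth generator. The setup: fix a generating set $S = \{s_1, s_2, s_3, s_4\}$ of $G$, and let $n = [G:H]$. Using the combinatorial model of coset intersections (the ``chessboard'' from \cite{cosetgraph}), the goal is to produce, via Nielsen transformations on $S$, a new tuple $\tilde S = \{t_1, t_2, t_3, t_4\}$ whose elements lie in four distinct left cosets and four distinct right cosets of $H$, arranged so that the corresponding partial system of coset intersections can be completed to a left-right transversal. The key external input is the ``shifting boxes'' technique: since $G$ acts transitively on left cosets (and on right cosets) of $H$, one can replace $s_i$ by $s_i h$ or $h s_i$ — and more generally apply a Nielsen move that multiplies one generator by a word in the others — to move the image of a chosen generator into any prescribed coset, at the cost of possibly disturbing where the other generators sit.

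The key steps, in order, are as follows. First, I would reduce to the ``tight'' case $n = 4$ (if $n > 4$ there is more room and one expects the argument to be easier, or to follow by an inductive/restriction argument as in \cite{transgen}); so assume $[G:H] = 4$ and we need $\tilde S$ to hit all four left cosets and all four right cosets. Second, pass to the core $K = \core_G(H)$ and work in the finite quotient $Q = G/K$, which acts faithfully and transitively on the four cosets; this is a transitive subgroup of $S_4$, so $Q$ is one of $C_4$, $V_4$, $D_4$, $A_4$, $S_4$. Third, do a case analysis on $Q$: in each case one must show that the four generators can be Nielsen-transformed so that, reading off their actions on the four left cosets and four right cosets, the resulting $4\times 4$ ``occupancy'' pattern on the chessboard admits a system of distinct representatives extending to a full left-right transversal (by Hall's theorem applied to $\Gamma_{H,H}^G$, as recalled in the introduction, a completion always exists once the partial assignment is consistent). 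The main work is arranging the generators to avoid the bad configurations — essentially, ensuring no generator is forced to coincide (mod $H$ on the left and on the right simultaneously) with an already-placed one in a way that blocks completion.

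I expect the main obstacle to be exactly this case analysis combined with the shifting-boxes moves: each time one shifts a generator to fix its left coset, one risks moving another generator out of the right coset it was supposed to occupy, and with four generators the dependency graph of ``which move disturbs which generator'' is substantially more entangled than with three. Concretely, the hard part is to show that for every transitive $Q \le S_4$ there is a choice of Nielsen transformations realizing an occupancy pattern with a perfect matching on both sides simultaneously; the delicate sub-case is likely the most ``rigid'' group, $C_4$ (cyclic, so few independent directions to shift in), and possibly $V_4$ where every non-identity element is an involution. I would handle these by choosing the shifting words carefully — using commutators and products of the $s_i$ so that a single Nielsen move adjusts the left-coset position of one generator while leaving the right-coset positions of the others fixed (or permuting them harmlessly) — and then verify combinatorially that the resulting chessboard pattern is completable. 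Once Theorem 1.1's displayed first sentence is established, the final ``if and only if'' follows immediately: the ``only if'' direction is the trivial observation already noted in the introduction (a generating left-right transversal has $[G:H]$ elements, so $\rank(G) \le [G:H]$), and the ``if'' direction is the statement just proved, taking $\tilde S$ and extending it to the desired transversal.
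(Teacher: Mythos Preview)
Your proposed route is genuinely different from the paper's, and it is worth spelling out the contrast. The paper does \emph{not} reduce to the tight case $[G:H]=4$, does \emph{not} pass to $G/\core_G(H)$, and does \emph{not} case on the isomorphism type of the quotient as a transitive subgroup of $S_4$. Instead, after left-right cleaning $S$, it cases on $|S\cap H|\in\{1,2,3\}$; the heart of the argument is the case $|S\cap H|=2$, which is subdivided according to the \emph{$H$-exponents} $e_1\le e_2$ of the two generators lying outside $H$ (the four cases are $e_2\ge 4$; $e_1=2<e_2$; $e_1=e_2=3$; $e_1=e_2=2$). The engine driving every sub-case is a pair of lemmas developed earlier in the paper: an ``Extraction Lemma'' (if some $g\in S\smallsetminus H$ has both $gH$ and $Hg$ lying in sparse $\langle S\cap H\rangle$-orbits, one can left-right extract) and a corollary (if some power $s^n$ lies diagonal to all of $S$, one can extract). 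The exponent decomposition is what makes these lemmas bite: for instance, $e_2\ge 4$ immediately produces diagonal powers $g_2^2,g_2^3$, while $e_1=2$ forces the inversion map to fix the chessboard of $g_1$ and thereby manufacture sparse orbits. Your quotient-structure approach would trade this for a finite list of ambient groups, which is appealing in principle, but note that the image of $S$ in $Q$ is only a generating \emph{multiset} of size $4$ (not minimal), so each of your five cases still hides a substantial internal case analysis.

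That said, as written your proposal has a real gap, not just a stylistic difference. The reduction ``assume $[G:H]=4$, since $n>4$ should be easier'' is asserted but not argued; nothing in \cite{transgen} gives such a monotonicity, and the paper's own proof treats all $n\ge 4$ uniformly precisely because no such reduction is available. More seriously, the entire content of the theorem lives in the sentence ``I would handle these by choosing the shifting words carefully\ldots and then verify combinatorially that the resulting chessboard pattern is completable.'' This is exactly the step that, in the paper, occupies several pages of delicate chessboard manipulations even \emph{with} the Extraction Lemma available as a tool; without an analogue of that lemma your case analysis on $Q\in\{C_4,V_4,D_4,A_4,S_4\}$ would have to rediscover, ad hoc in each case, the mechanism for moving one generator's left coset without wrecking the others' right cosets. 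Until those moves are actually written down and checked (especially in the $V_4$ case, where every nontrivial element has $H$-exponent $2$ --- this corresponds to the paper's hardest Case~IV), the proposal is a plan rather than a proof.
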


Notice that in the above theorem we speak of a much stricter condition on the left-right transversal found. Rather than showing existence of \textit{some} left-right transversal which generates the group, we instead take a generating set $S$ and Nielsen-transform it to a new generating set $S'$ which \textit{extends} to a left-right transversal (that is, $S'$ is a subset of a left-right transversal). As it turns out, all of our results making progress on \Cref{ques: motivating} are of this form: taking a generating set $S$ and Nielsen-transforming it to one which extends to a left-right transversal. We discuss this in more detail in \Cref{sec:finite-setting}, where we re-phrase our motivating question as \Cref{main Q}.

The technique of shifting boxes, as applied to chessboards, also allows us to make progress on \Cref{ques: motivating} under different additional hypotheses, this time invoking divisibility conditions on various subgroup indices. Taking $\core_G(H) := \cap_{g \in G}gHg^{-1}$ to be the \textit{core} of the subgroup $H$ in $G$ (that is, the largest normal subgroup of $G$ contained in $H$), we state another main result of this paper which makes further progress on \Cref{ques: motivating}.

\begin{thm}
\label{thm: divisor_thm}
Let $G$ be a group of rank $r$ with a finite index subgroup $H$, such that $[G:H] \geq r$. Suppose that each non-trivial divisor of $[G:\core_G(H)]$ is at least $2r-1$. Then any generating set $S$ having size $r$ may be Nielsen-transformed to a set $\tilde{S}$ that may be extended to a left-right transversal of $H$ in $G$.
\end{thm}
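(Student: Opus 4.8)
The plan is to pass to a finite quotient, recast the goal combinatorially in terms of chessboards, exploit the divisibility hypothesis to force every chessboard to be either a single cell or very large, and then place the generators one at a time by shifting boxes.

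First I would carry out the reduction of \Cref{sec:finite-setting}: replace $G$ by $\bar G := G/\core_{G}(H)$ and $H$ by $\bar H := H/\core_{G}(H)$. Since the left-right coset intersection data of $(G,H)$ agrees with that of $(\bar G,\bar H)$, and since $\tilde S$ extends to a left-right transversal of $H$ in $G$ precisely when its image does for $\bar H$ in $\bar G$, it suffices to solve the problem for the finite, core-free group $\bar G$, under the hypothesis that every non-trivial divisor of $|\bar G| = [G : \core_{G}(H)]$ is at least $2r-1$. Recall (from \cite{cosetgraph}) that the components of $\Gamma_{\bar H,\bar H}^{\bar G}$ being complete bipartite lets us package the coset intersections into chessboards $C_D$, one per double coset $D = \bar H g \bar H$: $C_D$ is an $m_D \times m_D$ grid with $m_D = [\bar H : \bar H \cap g\bar H g^{-1}]$, rows the left cosets inside $D$, columns the right cosets inside $D$, and every cell non-empty. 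A set $T$ extends to a left-right transversal exactly when, in each $C_D$, the members of $T$ lying in $D$ occupy pairwise distinct rows and pairwise distinct columns (a partial permutation matrix always extends to a full one). So the goal becomes: Nielsen-transform $S$ to $\tilde S$ so that in every double coset the members of $\tilde S$ landing there have distinct left cosets and distinct right cosets.

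Next I would feed in the hypothesis. As $\bar H \cap g\bar H g^{-1} \le \bar H$, the integer $m_D$ divides $|\bar H|$, hence divides $|\bar G|$, so $m_D = 1$ or $m_D \ge 2r-1$: every chessboard is a single cell or has side at least $2r-1$. Moreover $C_D$ is a single cell iff $g \in N_{\bar G}(\bar H)$, so the single-cell chessboards biject with the cosets of $\bar H$ in $N_{\bar G}(\bar H)$; if $N_{\bar G}(\bar H) \ne \bar G$ then, because $\tilde S$ generates $\bar G$, some $s_j \in \tilde S$ lies outside $N_{\bar G}(\bar H)$, and a short computation (using $g \in N_{\bar G}(\bar H)$) shows that right-multiplying any other generator by $s_j$ sends it out of any single-cell chessboard into one of side $m_{\bar H s_j \bar H} \ge 2r-1$. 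With this in hand I would run a greedy argument: order the elements of $\tilde S$ and process them one by one, maintaining the invariant that the processed ones already satisfy the distinctness condition. When $s_k$ is reached: if it lies in a chessboard of side $\ge 2r-1$, the at most $r-1$ already-placed generators block at most $r-1$ rows and at most $r-1$ columns there, so a free cell exists and the shifting-box technique (in the form developed earlier, following \cite{transgen}) moves $s_k$ into it without disturbing the earlier ones; if $s_k$ lies in a single-cell chessboard that is already occupied, first right-multiply it by $s_j$ to reach a large chessboard and then proceed as before. This settles the case $N_{\bar G}(\bar H) \ne \bar G$. In the remaining case $\bar H \trianglelefteq \bar G$, core-freeness forces $\bar H = 1$, all chessboards are single cells, and (barring the trivial case $\bar G = 1$) $|\bar G| \ge 2r-1 > r$; here one only needs the $r$-tuple to have distinct entries, which follows by repeatedly noting that a repeated entry $s_a = s_b$ makes $s_a$ redundant, so the remaining generators already generate $\bar G$ and $s_a$ can be shifted onto any currently unused element, killing one coincidence at a time.

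The main obstacle is the shifting-box step: one must check that an individual generator can genuinely be driven into a prescribed cell of a large chessboard by Nielsen transformations that leave the already-placed generators fixed in their cells. This is where the transitive action of $\bar G$ on the left and on the right cosets of $\bar H$, and the shifting-boxes machinery built on \cite{transgen}, are needed, and it is also the precise point where the bound $2r-1$ is used in sharp form: it is exactly the slack ensuring that $r-1$ blocked rows together with $r-1$ blocked columns still leave an available cell. A secondary delicate point is the bookkeeping around the single-cell chessboards --- equivalently, around $N_{\bar G}(\bar H)$ --- but the divisibility hypothesis is precisely what tames it, since it guarantees the only small chessboards are single cells, so one is never trapped inside, say, a $2 \times 2$ chessboard holding three generators with no room to separate them.
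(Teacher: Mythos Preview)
Your overall structure—pass to the core-free quotient, observe that every chessboard has side $1$ or at least $2r-1$, then greedily place the generators one at a time—is different from the paper's, and the step you yourself flag as ``the main obstacle'' is a genuine gap that existing machinery does not fill. ``Shifting boxes'' in \cite{transgen} and in this paper is not a procedure that drives a chosen generator into an arbitrary prescribed cell; it is a collection of ad hoc Nielsen transformations tailored to very specific configurations. If you apply Nielsen moves only to $s_k$ while keeping the other generators fixed as elements, the reachable set for $s_k$ is exactly $Ks_kK \cup Ks_k^{-1}K$ with $K=\langle s_j:j\neq k\rangle$, and nothing guarantees that this meets your target cell, nor even that $s_k$ stays inside its current chessboard. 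Invoking ``the transitive action of $\bar G$'' does not help, since you are acting by $K$, not by all of $\bar G$. A further symptom that the argument is not tight: your pigeonhole (at most $r-1$ blocked rows together with $r-1$ blocked columns) needs only chessboard side at least $r$ to produce a free cell, not $2r-1$, so the hypothesis is not being used where you claim it is sharp.

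The paper's argument is quite different and never looks at chessboard sizes. After left-right cleaning one may assume $|S\cap H|\geq 2$, and it then suffices to perform a single left-right extraction. Pick $s\in S\smallsetminus H$ and let $e$ be its $H$-exponent; one checks that $e$ divides $[G:\core_G(H)]$, hence $e\geq 2r-1$. The set $A=\{s^2,\ldots,s^{e-1}\}$ is then a diagonal subset of $G\smallsetminus H$ of size $e-2$, and each of the at most $r-2$ elements of $S\smallsetminus H$ can share a row with at most one element of $A$ and a column with at most one element of $A$, so at most $2(r-2)$ members of $A$ fail to be diagonal to all of $S$. Since $e-2\geq 2r-3>2(r-2)$, some power $s^n$ is diagonal to every element of $S$, and \Cref{cor: DPL} supplies the extraction. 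This pigeonhole is exactly where the bound $2r-1$ bites.
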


    Unfortunately the method of shifting boxes becomes combinatorially intractable when the rank of the underlying group gets sufficiently large. Hence other techniques become necessary. In this paper we introduce one such technique, which we call \textit{L-spins} applied to \textit{configurations} of multisets. A configuration of a multiset $S\subset G$ is simply a selection of some rows and columns of a chessboard of $H\leq G$ and the corresponding elements of $S$ lying in the intersections of these rows and columns (\Cref{defn:configuration}); one might think of this as a ``minor'' of a chessboard. And an L-spin is a way to re-arrange, via Nielsen transformations, this multiset on the configuration (\Cref{defn:L-spin}). We use these new techniques to prove the following version of \Cref{ques: motivating} with the added hypothesis that $H\leq G$ is a malnormal subgroup:
    
    \begin{thm}
    \label{thm:malnormal}
    Given a malnormal $H\leq G$ of finite index and a generating multiset $S$ of size $[G:H]$, $S$ can be Nielsen transformed to a left-right transversal.
\end{thm}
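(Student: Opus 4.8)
The plan is to read off the shape of the coset intersection chessboard of $H\le G$, which malnormality makes very rigid, identify exactly which $[G:H]$-tuples are left-right transversals, and then drive $S$ onto one of them using L-spins.

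\emph{Shape of the chessboard.} For $x\in gH\cap Hg'$ one has $gH=xH$ and $Hg'=Hx$, so $gH\cap Hg'=xH\cap Hx=x(H\cap x^{-1}Hx)$ and hence $|gH\cap Hg'|=|H\cap x^{-1}Hx|$. Malnormality forces this to equal $|H|$ when $x\in H$ --- which can only happen when $gH=Hg'=H$ --- and to equal $1$ otherwise. So every cell of the chessboard other than the $(H,H)$-cell is a singleton; a non-trivial left coset $gH$ is the disjoint union of its singleton intersections with right cosets, so it meets exactly $|H|$ of them, and in particular $|H|$, hence $G$, is finite. Since the components of $\Gamma_{H,H}^{G}$ are complete bipartite and balanced --- a left-right transversal is a perfect matching saturating both sides, so in each component $K_{a,b}$ we get $a=b$ --- the chessboard is one $1\times 1$ block (the identity block, whose single cell is $H$) together with $t$ blocks each of the form $K_{m,m}$ with $m=|H|$, and $1+tm=[G:H]$ (so $G$ is a Frobenius group with complement $H$). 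Thus a left-right transversal must put exactly one entry in the identity block, namely some element of $H$, and in each non-trivial block exactly $m$ entries occupying pairwise distinct rows and pairwise distinct columns; this ``one element of $H$ plus a diagonal of each block'' is the target configuration.

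\emph{Reaching the target.} Using the shifting-boxes technique --- the transitive actions of $G$ on left and on right cosets, realised through Nielsen transformations by the other entries of the tuple --- I would first arrange that one entry of $S$ lies in $H$. The remaining $[G:H]-1=tm$ entries then have to be redistributed $m$ to a block and, inside each block, moved onto a diagonal, and this is exactly what L-spins are for: fixing a configuration (\Cref{defn:configuration}), a sequence of L-spins (\Cref{defn:L-spin}) re-arranges the entries of $S$ sitting on it by Nielsen transformations, so one can (i) choose a configuration meeting a block with too many entries and one with too few, and L-spin an entry across; and (ii) inside a single block, L-spin two entries sharing a row (or column) off each other. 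I would organise this as an algorithm driven by a potential function --- say the number of blocks holding the wrong number of entries plus the number of coincident rows/columns within blocks --- and show each step strictly decreases it. Since every Nielsen transformation preserves $\langle S\rangle=G$, the transversal reached at the end again generates $G$, consistent with the statement, and this also shows along the way that some left-right transversal of $H$ generates $G$.

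\emph{Main obstacle.} The crux is to show the algorithm never stalls: whenever $S$ is not yet in the target configuration, an L-spin strictly decreasing the potential exists. Here one genuinely uses that $S$ generates $G$ --- for instance not all entries can lie in $H$, else $\langle S\rangle\le H\subsetneq G$, so there is always an entry outside the identity block available to feed an under-full non-trivial block --- together with the rigid block structure above and the fact that the tuple has exactly $[G:H]$ entries, so a deficit in one block is matched by a surplus in another. Making this precise requires a case analysis of which L-shaped sub-configurations admit which spins, and checking in each case that some available spin lowers the potential; the subsidiary point that every spin used is a legitimate sequence of Nielsen transformations follows from the row/column interpretation (two entries in a common row differ on the right by an element of $H$, two in a common column differ on the left by one) and is supplied by the L-spin machinery developed before the theorem.
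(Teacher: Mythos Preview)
Your analysis of the chessboard shape is correct and matches the paper's. The gap is in the ``reaching the target'' part, and it is a genuine one.

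First, L-spins cannot move an entry between blocks. An L-spin uses three entries $a,b,c$ with $Ha=Hb$ and $bH=cH$; but two elements sharing a right (or left) $H$-coset lie in the same double coset $HgH$. So your step ``choose a configuration meeting a block with too many entries and one with too few, and L-spin an entry across'' is not available: rows and columns live entirely inside a single chessboard, and so do L-spins. More seriously, even inside a single block the L-spin machinery preserves connected components of the configuration graph (\Cref{prop:connectedness L-spins}), so a potential function counting row/column coincidences will stall whenever the entries in a block split into several components --- and this genuinely happens. Indeed, the natural first step (which you skip) is to Nielsen-transform $S$ to a \emph{left} transversal via \Cref{prop: left-clean}; then every block already carries exactly $m$ entries, one per column, so no redistribution is needed, but also \emph{no} L-spin is available at all, since an L-spin requires two entries in a common column.

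The paper's proof supplies the missing mechanism and it is precisely the one consequence of malnormality you did not exploit: because every non-identity box is a singleton, two entries landing in the same box are literally equal, so the Nielsen move $\{g,g\}\mapsto\{g,e\}$ produces the identity, and $e$ can then be walked to any prescribed element of $G$ by a chain $e\mapsto s_{i_1}\mapsto s_{i_1}s_{i_2}\mapsto\cdots$ using that $S$ generates. This ``teleport'' is what lets one cross between components (and between chessboards). Concretely, the paper first passes to a left transversal; for a non-self-inverse pair $HgH\neq Hg^{-1}H$ it forms $S_g\cup S_{g^{-1}}^{-1}$ inside $HgH$, puts each component into normal form (two dots in the same corner box), extracts $e$, and re-inserts it to merge components until a single square component remains, which is then solvable by \Cref{prop:solv}; for a self-inverse $HgH$ it uses the inverse-dual graph, where the two parallel core edges of any sweet again give two dots in one box, so the $e$-trick converts every sweet to an octopus and \Cref{cor:final} finishes. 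Your outline would become a proof once this singleton $\Rightarrow$ identity $\Rightarrow$ teleport device replaces the impossible inter-block L-spin.
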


    Of course, only finite groups have finite index malnormal subgroups, as discussed in \Cref{sec:applications}, so the above theorem is only relevant for finite groups.  Note that the case when $H$ is normal is easily resolved in the affirmative, as left and right cosets of normal subgroups match up so one can immediately apply \cite[Theorem 3.7]{transgen}. Moreover, we resolve the special case of \Cref{ques: motivating} where $H$ is very close to  normal (that is, when $[H:xHx^{-1}\cap H]\leq 2$ for all $x\in G$), again in the affirmative as \Cref{prop:almost-normal}. Thus we see that if $H\leq G$ is very close to normal or very far from normal, then our motivating question is resolved in the affirmative in each case. So it is the instances where $H$ is somewhere between normal and malnormal where further investigation could be done, as are the instances where $\rank(G)>4$.
    
    This paper is laid out as follows:
    \\In \Cref{sec:graphs} we give an overview of the structure results and techniques introduced in \cite{cosetgraph,  transgen} on coset intersection graphs and chessboards. In \Cref{sec:shifting} we generalise some of the shifting boxes techniques from \cite{transgen} to obtain further results which become our main approach to answering questions about left-right transversals as generating sets. We then use these new results to prove \Cref{thm: divisor_thm}. In \Cref{sec:rank4} we apply our new techniques on shifting boxes developed in \Cref{sec:shifting}, to prove the rank-4 case of our motivating question; this is \Cref{thm: rank4case}. This proof is done in the similar style as the proof of the rank-3 case given in \cite[Theorem 3.11]{transgen}; as a sequence of reductions to various sub-cases depending on where certain generators lie in the chessboards of $H\leq G$. In \Cref{sec:finite-setting} we focus on the special case of \Cref{ques: motivating} where $G$ is finite, looking at additional hypotheses on $H\leq G$. Namely, when $H$ is cyclic (\Cref{lem:cyclic-nielsen}) or isomorphic to $C_{p}\times C_{p}$ for $p$ prime (\Cref{lem:CpxCp}), answering \Cref{ques: motivating} in the affirmative in all of these cases. In \Cref{sec: configurations} we define configurations and L-spins, and then use these to show some normal form theorems for configurations. We use these results to answer \Cref{ques: motivating} under the additional hypothesis that $H$ is very close to normal in $G$ (\Cref{prop:almost-normal}) and when $H$ is malnormal in $G$ (\Cref{thm:malnormal}).

\section{Coset intersection graphs and transversals as generating sets}\label{sec:graphs}

The work in Sections \ref{sec:graphs},\ref{sec:shifting},\ref{sec:rank4} is a direct extension of work done by Button, Chiodo and Zeron-Medina in the two papers \cite{cosetgraph,  transgen}. In \cite{cosetgraph}, the coset intersection graphs associated to a group $G$ were studied:

\begin{defn}\label{defn:coset-intersection}
     For a group $G$ with finite index subgroups $H,K  \leq G$, the associated \emph{coset intersection graph}, $\Gamma_{H,K}^{G}$, is the bipartite graph on vertex set $V = \{gH \ | \ g \in G\}\sqcup\{Kg \ | \ g \in G\}$ (where we treat $gH$ and $Kg$ as different vertices even if $gH=Kg$ as sets), with an edge joining two cosets with non-empty intersection in $G$.
\end{defn}

\begin{thm}{{\cite[Theorem 3]{cosetgraph}}}
Let $G$ be a group and $H,K \leq G$ of finite index. Then $\Gamma_{H,K}^G$ is a disjoint union of components of the form $K_{s_i,t_i}$, where $\frac{t_i}{s_i}$ is independent of $i$.
\end{thm}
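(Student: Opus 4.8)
The plan is to identify the connected components of $\Gamma_{H,K}^{G}$ with the $(K,H)$-double cosets of $G$. The first step is the observation that a left coset $gH$ and a right coset $Kg'$ intersect if and only if $g$ and $g'$ lie in the same double coset, i.e. $KgH=Kg'H$: indeed $gH\cap Kg'\neq\emptyset$ says $gh=kg'$ for some $h\in H$, $k\in K$, which is the same as $g'\in KgH$, and since the $(K,H)$-double cosets partition $G$ this is equivalent to $KgH=Kg'H$. Since each double coset $D$ is simultaneously a union of left cosets of $H$ (namely $D=\bigcup_{k\in K}kgH$) and a union of right cosets of $K$, one gets a well-defined partition of the vertex set of $\Gamma_{H,K}^{G}$ by assigning each vertex $gH$ or $Kg$ to the double coset containing $g$; and by the previous sentence there are no edges between distinct blocks of this partition.

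Next I would observe that, within a single block --- the left cosets contained in $D$ together with the right cosets contained in $D$ --- every left-coset vertex is joined to every right-coset vertex, again by the intersection criterion, since all the relevant representatives lie in the one double coset $D$. Hence this induced subgraph is a complete bipartite graph $K_{s_D,t_D}$, where $s_D$ is the number of left cosets of $H$ inside $D$ and $t_D$ is the number of right cosets of $K$ inside $D$. As a non-empty complete bipartite graph is connected, these blocks are precisely the connected components of $\Gamma_{H,K}^{G}$, which gives the first assertion.

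It then remains to compute the ratio $t_D/s_D$ and check that it is independent of $D$. Writing $D=KgH$, the group $K$ acts transitively (by left multiplication) on the set of left cosets of $H$ contained in $D$, and the stabiliser of $gH$ is $K\cap gHg^{-1}$, so by orbit--stabiliser $s_D=[K:K\cap gHg^{-1}]$; symmetrically $H$ acts transitively (by right multiplication) on the right cosets of $K$ in $D$ with stabiliser $H\cap g^{-1}Kg$, so $t_D=[H:H\cap g^{-1}Kg]$. Conjugation by $g$ is an isomorphism $H\to gHg^{-1}$ carrying $H\cap g^{-1}Kg$ onto $gHg^{-1}\cap K$, hence $t_D=[gHg^{-1}:gHg^{-1}\cap K]$. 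Setting $L:=K\cap gHg^{-1}$ --- a finite-index subgroup of $G$, being an intersection of two finite-index subgroups --- multiplicativity of the index yields
\[
\frac{t_D}{s_D}=\frac{[gHg^{-1}:L]}{[K:L]}=\frac{[G:L]/[G:gHg^{-1}]}{[G:L]/[G:K]}=\frac{[G:K]}{[G:H]},
\]
which does not depend on $D$.

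The whole argument is essentially bookkeeping once the double-coset picture is set up, so I expect no serious obstacle; the one point requiring care is that $G$ need not be finite, so $s_D$ and $t_D$ should be extracted from the orbit--stabiliser theorem applied to the finite $K$- and $H$-actions above (rather than from cardinalities of the cosets themselves), and one should record that $L$ has finite index so that every index appearing in the displayed computation is finite.
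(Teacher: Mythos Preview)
Your proof is correct. Note, however, that the paper does not actually supply its own proof of this statement: it is quoted verbatim from \cite[Theorem~3]{cosetgraph} and used as background, so there is no in-paper argument to compare against. That said, the double-coset decomposition you use is exactly the standard route (and is the one taken in \cite{cosetgraph}): components correspond to $(K,H)$-double cosets, completeness of each bipartite piece follows from the intersection criterion, and the ratio $t_D/s_D=[G:K]/[G:H]$ drops out of orbit--stabiliser together with multiplicativity of the index. Your care about finiteness of the indices (so that the displayed fraction makes sense even for infinite $G$) is appropriate.
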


The main application of this result is to the case when $H = K$, where it says that every right $H$ coset contained in a double coset $HgH$ intersects every left $H$ coset in the double coset. Pictorially, one can consider $G$ as a disjoint union of square grids corresponding to double cosets of $H$, where the columns and rows of each grid respectively correspond to left and right $H$ cosets contained in the double coset, and each square in each grid corresponds to a non-empty intersection of the (cosets represented by) the associated row and column; see \Cref{fig:chessboard1}. In \cite{transgen}, these square grids are referred to as \textit{chessboards}. We will work extensively with chessboards in this paper, and from hereon in keeping with the notation in \cite{cosetgraph,  transgen} we will always take rows to represent right cosets and columns to represent left cosets.

\begin{figure}[h]
    \centering
    \begin{center}
    	\begin{tikzpicture}[line cap=round,line join=round,x=1.5cm,y=1.5cm]
    	\foreach \x in {0,1,...,3}
    	{
    		\draw[line width = 1pt] (1+\x,0) -- (1+\x, 3);
    	}
    	\foreach \y in {0,1,...,3}
    	{
    		\draw[line width = 1pt] (1, \y) -- (4, \y);
    	}
    	
    	\draw (0.6, 0.4) node {$Hg_3$};
    	\draw (0.6, 1.4) node {$Hg_2$};
    	\draw (0.6, 2.4) node {$Hg_1$};
    	\draw (1.5, 3.3) node {$g_1'H$};
    	\draw (2.5, 3.3) node {$g_2'H$};
    	\draw (3.5, 3.3) node {$g_3'H$};
    	\draw (1.5, 2.5) node[scale = 0.7] {$g_1'H \cap Hg_1$};
    	\draw (2.5, 2.5) node[scale = 0.7] {$g_2'H \cap Hg_1$};
    	\draw (3.5, 2.5) node[scale = 0.7] {$g_3'H \cap Hg_1$};
    	
    	\end{tikzpicture}
    \end{center}
    \caption{Example of chessboard representing $Hg_1H$.}
    \label{fig:chessboard1}
\end{figure}

The main results of this paper concern \textit{transversals}. 

\begin{defn}
     Given a group $G$ and $H\leq G$, a set $S \subset G$ is called a \emph{left} (resp. \emph{right}) \emph{transversal} for $H$ in $G$ if it is a collection of representatives of all of the left (resp. right) cosets in $G$. A \emph{left-right transversal} is a set which is simultaneously a left and a right transversal for $H$ in $G$. We say a transversal $S$ is a \emph{generating transversal} if it generates $G$ as a group.
\end{defn}

A well known application of Hall's marriage theorem implies that every finite index subgroup possesses a simultaneous left-right transversal. In \cite{cosetgraph}, it was noted that a left-right transversal of a finite index subgroup $H \leq G$ can be obtained directly, simply by choosing an element from each diagonal entry of each chessboard of $H$. The ease and directness of this observation suggests that chessboards may be a useful tool to answer related questions.

The question approached in this paper is whether a finitely generated group $G$ necessarily has a 
generating set contained in a left-right transversal of a given finite index subgroup $H$. The direction of approach is `constructive' via chessboards, in the style of the above paragraph. In particular, using chessboards, we study how the positions (relative to cosets of $H$) of elements of a multiset\footnote{The reason why multisets are used instead of sets is that a Nielsen transformation of a set may result in multiple occurrences of a group element, which we do not want to discard. We do not use $n$-tuples because the order of the elements plays no role.} $S \subset G$ change under Nielsen moves.

\begin{defn}
Given a multiset of elements $S \subset G$, a multiset $\tilde{S}$ is said to be obtained from $S$ via a \textit{Nielsen move} if it is obtained by either of the following procedures:
    \begin{enumerate}
        \item obtained by replacing some $g \in S$ with $g^{-1}$
        \item obtained by replacing some $g \in S$ with $hg$ or $gh$ for some $h \in S\smallsetminus\{g\}$
    \end{enumerate}
    Two multisets are \textit{Nielsen equivalent} if they can be obtained from one another via a finite sequence of Nielsen moves; such a finite sequence is referred to as a \textit{Nielsen transformation}.
\end{defn}

To illustrate how chessboards provide a method of viewing `transversality' of generating sets, consider the most basic result in this direction, established in {{\cite[Theorem 3.7]{transgen}}}. The proof is included because many arguments in this paper will be of a similar style. 

\begin{prop}
   \label{prop: left-clean}
    Let $G$ be a group with finite index subgroup $H$, and take a multiset $S \subset G$ whose elements generate $G$. If $[G:H] \geq |S|$, then $S$ is Nielsen equivalent to a multiset with entries in distinct left cosets of $H$ (in particular, there exists a left transversal of $H$ generating $G$).
\end{prop}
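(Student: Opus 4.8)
The plan is to induct on $|S|$, using the transitive action of $G$ on the left cosets of $H$ to "push" one generator at a time into a fresh left coset via Nielsen moves, while never disturbing the generators already placed. Write $S = \{g_1, \dots, g_k\}$ with $k = |S| \le [G:H]$. Suppose inductively that after some Nielsen transformation we may assume $g_1, \dots, g_{j-1}$ lie in $j-1$ distinct left cosets of $H$; we want to move $g_j$ out of those cosets without changing $g_1, \dots, g_{j-1}$. Since $j - 1 < [G:H]$ there is at least one left coset $xH$ avoided by $g_1, \dots, g_{j-1}$. If $g_j$ already lies in such a coset we are done with this stage, so assume $g_j \in g_i H$ for some $i < j$.

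The key step is the "shifting boxes" idea: because $\langle S \rangle = G$ acts transitively on the left cosets, and multiplying $g_j$ on the left by a generator $g_m$ ($m \ne j$) corresponds, in the permutation action on left cosets, to moving the coset containing $g_j$ by the permutation induced by $g_m$, we can realise any element of $G$ as a word in the $g_m$ ($m \ne j$) — but naively applying such a word as a sequence of left-Nielsen-moves $g_j \mapsto g_m g_j$ would alter intermediate states and could move the already-fixed $g_1,\dots,g_{j-1}$. The standard fix, which I expect is exactly what the cited proof does, is the following: first perform Nielsen moves of the form $g_j \mapsto g_m^{\pm 1} g_j$ using only $m$ with $m \ne j$; each such move changes only $g_j$ (the other entries are untouched because a Nielsen move alters a single element), so $g_1, \dots, g_{j-1}$ stay put throughout, and the coset of $g_j$ traces out the orbit of its starting coset under $\langle g_m : m \ne j\rangle$ acting on left cosets. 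It therefore suffices to show that this subgroup still acts transitively on the left cosets, or at least that the orbit of $g_j H$ under it contains one of the cosets avoided by $g_1, \dots, g_{j-1}$.

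That transitivity is the main obstacle, and the way around it is to note that $\{g_m : m \ne j\} \cup \{g_j\}$ generates $G$, hence $\langle g_m : m\ne j\rangle$ together with $g_j$ generates $G$; since $g_j$ itself lies in $g_i H$ and $g_i \in \langle g_m : m \ne j\rangle$ (as $i \ne j$), we get $g_j \in \langle g_m : m\ne j\rangle \cdot H$, and more to the point the subgroup $K := \langle g_m : m \ne j\rangle$ satisfies $KH = G$ (because $G = \langle K, g_j\rangle = \langle K, g_i^{-1} g_j\rangle$ and $g_i^{-1} g_j \in H$, so $G = \langle K, H \cap \text{stuff}\rangle \subseteq KH$... one checks $G = KH$ directly). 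The condition $KH = G$ is precisely the statement that $K$ acts transitively on the left cosets $G/H$. Hence there is $\kappa \in K$ with $\kappa \cdot g_j H = xH$ for our chosen avoided coset $xH$; writing $\kappa$ as a word in the $g_m^{\pm 1}$ ($m \ne j$) and applying the corresponding sequence of left-multiplication Nielsen moves to $g_j$ alone moves $g_j$ into $xH$ while fixing everyone else. This completes the inductive step; after $k$ stages all entries lie in distinct left cosets. Finally, extend this multiset (which still generates $G$ and has distinct-coset, hence distinct, entries) to a full left transversal by adjoining one representative of each remaining left coset — the resulting set still generates $G$, giving the parenthetical conclusion. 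I would double-check the one delicate point, that the intermediate left-multiplication moves are genuinely legal Nielsen moves at each stage (i.e. the multiplier is a current, distinct entry of the multiset), which holds since we only ever use $g_m$ with $m \ne j$ and those entries are never modified during this stage.
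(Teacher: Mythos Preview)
Your inductive strategy has a genuine gap: the claim that $K := \langle g_m : m \neq j\rangle$ satisfies $KH = G$ (equivalently, that $K$ acts transitively on $G/H$) is false in general, and so is the weaker claim that the $K$-orbit of $g_jH$ must contain a coset avoided by $g_1,\ldots,g_{j-1}$. Here is a concrete counterexample. Take $G = S_4$, $H = \mathrm{Stab}(4) \cong S_3$, so that $G/H$ is identified with $\{1,2,3,4\}$ via $\sigma H \leftrightarrow \sigma(4)$. Let
\[
g_1 = (12),\quad g_2 = (34),\quad g_3 = (143),\quad g_4 = (12)(34).
\]
One checks easily that $S = \{g_1,g_2,g_3,g_4\}$ generates $S_4$ and $|S| = 4 = [G:H]$. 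At stage $j=3$ we have $g_1H \leftrightarrow 4$ and $g_2H \leftrightarrow 3$ distinct, while $g_3H = g_2H$. Now $K = \langle g_1,g_2,g_4\rangle = \langle (12),(34)\rangle \cong C_2\times C_2$, whose orbit on the point $3$ is exactly $\{3,4\}$ --- precisely the two cosets already occupied by $g_1,g_2$. So no sequence of left-multiplications of $g_3$ by elements of $K$ can move $g_3$ into a fresh coset, and indeed $|KH| = |K||H|/|K\cap H| = 4\cdot 6/2 = 12 < 24$, so $KH \neq G$. The hand-waved step ``$G = \langle K, h\rangle$ with $h\in H$ implies $G\subseteq KH$'' is exactly where the argument breaks: $KH$ need not be a subgroup.

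The paper's proof avoids this obstruction by a different two-phase procedure. First it \emph{cleans}: whenever two elements $s,s'$ share a left coset, it replaces $s'$ by $s^{-1}s' \in H$, so that after finitely many such moves every element outside $H$ sits alone in its column and all collisions are concentrated in $H$. Then it \emph{extracts}: while $|S\cap H|\geq 2$, it uses transitivity of the \emph{full} group $\langle S\rangle = G$ on $G/H$ (not of a proper subgroup $K$) together with a minimal-length-word argument to locate an empty coset of the special form $s_2^{\epsilon}s_1 H$, and then sends some $h\in S\cap H$ with $h\neq s_2$ to $s_2^{\epsilon}s_1 h$. The point is that the element being moved starts in $H$, so one can freely use \emph{any} two generators $s_1,s_2$ (possibly including the one just extracted on a previous round) to reach the target coset; your approach, by contrast, forbids using $g_j$ itself and thereby restricts to the possibly intransitive subgroup $K$.
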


\begin{defn}
    We shall refer to a coset as \emph{empty} if it contains no elements of $S$.
\end{defn}

\begin{proof}
    Left cosets of $H$ correspond to columns in the chessboards. Two elements of $S$ belong to the same left coset of $H$ if they belong to the same column of a chessboard. Suppose $s, s' \in S $ belong to the same column. Then $s'$ may be replaced by $s^{-1}s'$ (corresponding to a pair of Nielsen moves), which now belongs to $H$. Hence we may suppose every element of $S$ either belongs to $H$ (a $1\times 1$ chessboard), or to a column which contains no other entries of $S$. An example of such a configuration is shown in \Cref{fig:Box_Shifting_sec2}, with the dots representing elements of $S$.

    \begin{figure}[h]
        \centering
        \begin{center}
        	\renewcommand{\w}{1}
        	\renewcommand{\h}{1}
        	\begin{tikzpicture}[line cap=round,line join=round,x=1.5cm,y=1.5cm]
        	\foreach \y in {0,1,...,\h}
        	{
        		\draw [line width = 1pt] (0,\y+1.5) -- (\w,\y+1.5);
        	}
        	\foreach \x in {0,1,...,\w}
        	{
        		\draw [line width = 1pt] (\x,1.5) -- (\x,\h+1.5);
        	}
        	
        	\renewcommand{\w}{3}
        	\renewcommand{\h}{3}
        	\foreach \y in {0,1,...,\h}
        	{
        		\draw [line width = 1pt] (2.5,\y+0.5) -- (\w+2.5,\y+0.5);
        	}
        	\foreach \x in {0,1,...,\w}
        	{
        		\draw [line width = 1pt] (\x+2.5,0.5) -- (\x+2.5,\h+0.5);
        	}
        	
        	\renewcommand{\w}{3}
        	\renewcommand{\h}{3}
        	\foreach \y in {0,1,...,\h}
        	{
        		\draw [line width = 1pt] (7,\y+0.5) -- (\w + 7, \y+0.5);     
        	}
        	\foreach \x in {0,1,...,\w}
        	{
        		\draw [line width = 1pt] (\x + 7, 0.5) -- (\x + 7, \h + 0.5);
        	}
        	\draw [fill = black] (0.5, 2.25) circle (3.5pt);
        	\draw [fill = black] (0.5, 1.75) circle (3.5pt);
        	\draw [fill = black] (3, 3) circle (3.5pt);
        	\draw [fill = black] (4, 3) circle (3.5pt);
        	\draw [fill = black] (5, 2) circle (3.5pt);
        	\draw [fill = black] (7.5, 1) circle (3.5pt);
        	\draw [fill = black] (8.5, 2) circle (3.5pt);
        	\end{tikzpicture}
        \end{center} 
        \caption{A configuration in which elements outside of $H$ lie in distinct columns of chessboards.}
        \label{fig:Box_Shifting_sec2}
    \end{figure}
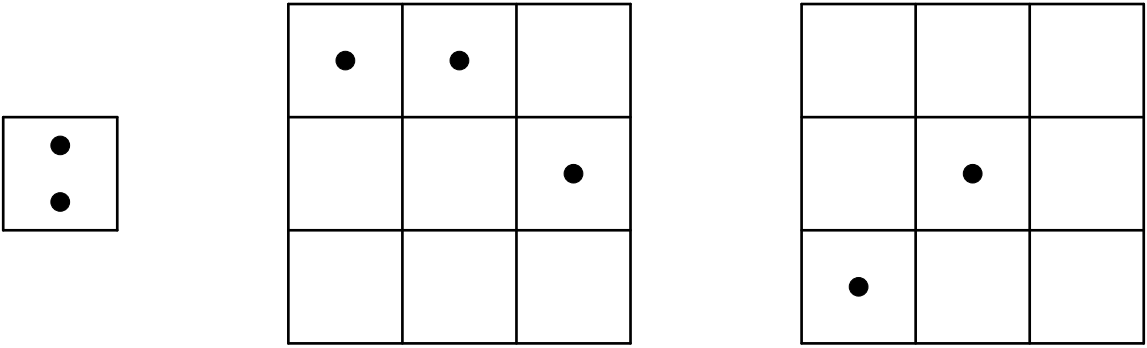
    
    The final stage of the proof is to `extract', via Nielsen moves, elements of $S\cap H$ into columns not yet containing elements of $S$. $S$ generates $G$ (this property is preserved under Nielsen equivalence) and $G$ acts transitively on the columns of the chessboards (i.e. on left cosets of $H$), and because $[G:H] \geq |S|$ there is at least one empty column provided $|S\cap H| \geq 2$ (if this is not the case, then the elements of $S$ already lie in distinct columns). 
    
    Suppose $gH$ is an empty column. It is possible to write $g = t_{1}^{\epsilon_1}\ldots t_{n}^{\epsilon_n}$, where $t_i \in S$, $\epsilon_i \in \{1,-1\}$ for each $i$. By taking $n$ minimal, it follows that there is an empty column of the form $s_2^{\epsilon}s_1H$, where $s_2,s_1 \in S$ (this works even when $n = 1$ because $S\cap H$ is non-empty). Because $|S\cap H| \geq 2$, we may find $h \in S\cap H$ such that $h \neq s_2$. 
    Then the replacement $h \mapsto s_2^{\epsilon}s_1h$ is a composition of Nielsen moves, and $s_2^{\epsilon}s_1h$ belongs to the empty column. This can be repeated until the multiset has only one entry in $H$.
    
    To obtain the parenthetical claim in the proposition, the set $S'$ obtained from $S$ by the above sequence of Nielsen moves still generates $G$, and may be extended to a left transversal of $H$ by adjoining elements from the remaining empty columns of the chessboards.
    \end{proof}
    
There were two main steps in the above proof. The first was the `cleaning' procedure: performing Nielsen moves until no two elements of $S$ shared a column outside of $H$. The second was the `extraction' procedure, Nielsen-moving elements of $S\cap H$ outside of $H$, preserving the property that no two elements share a column outside of $H$.

\begin{defn}
     Let  $H \leq G$ be of finite index, and take $S \subset G$. We say $S$ is \emph{left-clean} relative to $H$ if $S\smallsetminus H$ is non-empty and its elements lie in distinct left cosets of $H$, and so distinct columns of the chessboards. We similarly define \emph{right-clean}, and  \emph{left-right clean}.
\end{defn}

Given $H$, in addition to a multiset being clean we will also refer to a set of elements being \emph{diagonal}. A set is diagonal if no two elements belong to the same left or right coset of $H$. This terminology is motivated by chessboard diagrams: with appropriate ordering of columns and rows, a diagonal set can be drawn as belonging only to the diagonal of each board.

\begin{defn}
     Let $S \subset G$ be a multiset which is left-clean relative to $H$. A \emph{left-extraction} of an element of $S$ is a Nielsen transformation $S \rightarrow S'$ such that $|S'\cap H| = |S\cap H|-1$ and if $S$ is left-clean, then $S'$ is. We can define \emph{right-extraction} analogously. For the majority of this paper we will be interested in \emph{left-right extraction} and when this is clear from the context we will omit the `left-right'.  
\end{defn}

It is clear that \Cref{prop: left-clean} presents a procedure to Nielsen transform a given multiset into a left-clean set, and a general left-extraction procedure. Furthermore it is clear that the proposition holds with `right' replacing `left' throughout. In particular, given a multiset $S$, one may combine the left- and right- cleaning procedures (e.g. perform them in turn) to give a left-right cleaning procedure. However, it is not obvious how to combine the left- and right- extraction processes to give a left-right extraction process. Such a general procedure would, by modifying the argument in the proposition, result in a proof that any finitely generated group has a left-right transversal (of any subgroup $H$ with sufficiently large finite index) which generates the group.

The next section of the paper gives some general left-right extraction techniques. Before proceeding with this, it is noted that when $S$ has few elements, it is possible to explicitly devise ad-hoc left-right extraction procedures. Such procedures were presented in \cite{transgen} (the method of `shifting boxes'), and an example of a result obtained is given below.

\begin{thm}{{\cite[Theorem 3.11]{transgen}}} 
Let $G$ be a group, $S$ a generating set for $G$ such that $|S| \leq 3$, and $H \leq G$ a subgroup of finite index such that $|S| \leq [G:H]$. Then $S$ is Nielsen-equivalent to a set $S'$ which is contained in a left-right transversal of $H$ in $G$.
\end{thm}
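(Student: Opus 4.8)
\begin{Sprf}
The plan is to reduce everything to producing, from $S$, a Nielsen‑equivalent \emph{diagonal} set (no two elements in a common left or common right coset of $H$), since a diagonal set automatically extends to a left‑right transversal. For that last step I would use the structure of $\Gamma_{H,H}^{G}$: each chessboard of $H$ is a square grid with no empty cell (equivalently each component of $\Gamma_{H,H}^{G}$ is a complete bipartite $K_{n,n}$), so the elements of a diagonal $S'$ lying in a given chessboard fill cells forming a partial matching between its rows and columns; extending each such partial matching to a perfect matching of its chessboard and then picking one group element from every matched cell --- the element of $S'$ in the cells it already occupies --- yields, over all chessboards, a left‑right transversal containing $S'$. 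The case $|S|\le 1$ is immediate (a singleton is vacuously diagonal), so from now on $|S|\in\{2,3\}$ and hence $[G:H]\ge 2$, so $H\ne G$.

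Next I would normalise $S$ to be left‑right clean. Apply \Cref{prop: left-clean} to put the elements of $S\smallsetminus H$ into distinct columns, and then run its right‑handed counterpart: while two elements of $S\smallsetminus H$ lie in a common row, push one of them into $H$ by multiplying it (on the appropriate side) by the inverse of the other. Each such move only removes an element from $S\smallsetminus H$, so it preserves ``the elements of $S\smallsetminus H$ lie in distinct columns'', and $S\smallsetminus H$ never empties since $\gen{S}=G\ne H$; the procedure therefore halts with $S$ left‑right clean. Now $|S\cap H|\le 2$, since three elements of $H$ would generate a subgroup of $H$. If $|S\cap H|\le 1$ we are already done: the at‑most‑one element of $H$ sits alone in the trivial chessboard, and the remaining one or two elements lie in distinct rows and distinct columns of nontrivial chessboards, so $S$ is diagonal --- in particular this settles $|S|\le 2$. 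The only remaining case is $|S|=3$ with $S=\{h_1,h_2,g\}$, $h_1,h_2\in H$, $g\notin H$, $\gen{h_1,h_2,g}=G$: here I need exactly one left‑right extraction.

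For this configuration I would split on whether $g^2\in H$. If $g^2\notin H$, the single Nielsen move $h_1\mapsto h_1g^{-1}$ should suffice: the resulting set still generates $G$ (it contains $g$, hence $h_1$), the element $h_1g^{-1}$ lies outside $H$ in column $g^{-1}H$ and row $Hg^{-1}$, and a direct check using $g\notin H$ and $g^2\notin H$ shows these differ from the columns and rows occupied by $h_2$ and $g$; so the new set is diagonal. If $g^2\in H$, then $g$ acts on $G/H$ as a transposition swapping $H$ and $gH$; since $[G:H]\ge 3$ and $G=\gen{h_1,h_2,g}$ acts transitively on $G/H$, some $h_i$ --- say $h_1$ --- must satisfy $g^{-1}h_1g\notin H$ (otherwise both $h_1$ and $h_2$, and hence all of $G$, would preserve $\{H,gH\}$). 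Then I would apply the two Nielsen moves $h_1\mapsto gh_1\mapsto gh_1g$ and check, using $g^2\in H$ and $g^{-1}h_1g\notin H$, that $gh_1g$ lands in a column different from both $H$ and $gH$ and a row different from both $H$ and $Hg$; the new set still generates $G$ (it contains $g$, hence $g^{-1}(gh_1g)g^{-1}=h_1$), so it is diagonal, finishing the proof.

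The part I expect to be the genuine obstacle is this last left‑right extraction, and in particular the subcase $g^2\in H$: one has to exhibit a short explicit Nielsen sequence routing an element of $S\cap H$ into a box lying off \emph{both} the row and the column of $g$, and this is precisely where the hypotheses $[G:H]\ge 3$ (forcing a third left coset, hence a usable $h_i$) and $g^2\in H$ get spent; the coset arithmetic verifying that $gh_1g$ avoids the forbidden row and column is routine but must be done carefully. A secondary point is checking that the left‑ and right‑handed cleaning procedures of \Cref{prop: left-clean} genuinely compose without interfering, which is handled by the observation that the right‑handed cleaning only ever deletes elements from $S\smallsetminus H$. Finally, although Nielsen moves on a set may produce repetitions, this causes no trouble, as a diagonal multiset automatically has distinct entries. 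The original argument in \cite{transgen} reaches the same conclusion through a longer enumeration of sub‑cases governed by the positions of the three generators in the chessboards; the route above streamlines this by first normalising to a left‑right clean configuration so that only one extraction is left to do by hand.
\end{Sprf}
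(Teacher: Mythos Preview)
Your overall strategy is sound and matches the shape of the argument in \cite{transgen}: clean to a left-right clean configuration, then perform one ad-hoc left-right extraction. The cleaning step and the subcase $g^2\in H$ are fine. The gap is in the subcase $g^2\notin H$.

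You assert that $h_1g^{-1}$ lies in column $g^{-1}H$. This is false in general: the left coset of $h_1g^{-1}$ is $h_1g^{-1}H$, and this equals $g^{-1}H$ only when $gh_1g^{-1}\in H$. More to the point, your real need is $h_1g^{-1}H\ne gH$, i.e.\ $h_1\notin gHg$, and nothing in your hypotheses rules that out. Concretely, take $G=S_3$, $H=\langle(12)\rangle$, $g=(123)$, $h_1=(12)$, $h_2=e$. Then $g^2=(132)\notin H$, $\{h_1,h_2,g\}$ generates $G$, and $h_1g^{-1}=(12)(132)=(13)$, which satisfies $(13)H=\{(13),(123)\}=gH$. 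So after your single move the set $\{(13),e,(123)\}$ has two elements in the same column and is not diagonal.

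The fix is not hard but it is exactly the kind of branching you were hoping to avoid: you must allow yourself to try a different $h_i$, or a move on the other side, or a second corrective move when the first one collides with $g$'s column---and then argue that one of these alternatives always succeeds. That is essentially the ``shifting boxes'' case analysis carried out in \cite{transgen}. Your treatment of the $g^2\in H$ subcase already has this flavour (you choose the $h_i$ with $g^{-1}h_ig\notin H$ rather than an arbitrary one); the $g^2\notin H$ subcase needs the same care.
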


\section{Results on shifting boxes, and a proof of \Cref{thm: divisor_thm}}\label{sec:shifting}
In this section we present some left-right extraction procedures which are possible under certain conditions, as well as make some definitions which will be useful in the proof of \Cref{thm: rank4case}. To conclude the section, we apply our new extraction procedures to give a range of groups and subgroups for which any generating set of the group may be Nielsen transformed into a set which extends to a left-right transversal of the subgroup, finishing with a proof of \Cref{thm: divisor_thm}.

Taking $H\leq G$, our first definition here is motivated by viewing  chessboards as orbits of $H$ acting on its coset space $G/H$ by multiplication. We can visualise the left (respectively right) action of $H$ on $G/H$ (respectively $H \backslash G$) via the permutations it induces on the columns (respectively rows) of each chessboard. The same is true if we further restrict the action to $K \leq H$, however in this case whilst clearly the action still preserves the chessboards, it needn't be transitive on the columns/rows of a given chessboard. We will be interested in the case $K = \gen{S \cap H}$, where $S$ is a multiset of elements which we are trying to left-right extract from.  
\begin{defn}
     Let $K \leq H \leq G$, and suppose $S$ is a multiset of elements in $G$. With $K$ acting by left-multiplication on $G/H$, a $K$-orbit of a left coset of $H$ is \emph{sparse} if it contains a coset which doesn't contain an element of $S$. We also take the corresponding definition of sparseness for the right-multiplication action of $K$.
\end{defn}

For example, if we consider $K$ acting on the left, then saying $gH$ has a sparse $K$-orbit means we can find $k \in K$ such that $kgH$ is represented by a column (in the same chessboard as $gH$, as $K\leq H$) with no element of $S$ belonging to this new column.

The following lemma gives a left-right extraction technique which works provided $\gen{S \cap H}$ has left \emph{and} right sparse orbits in some chessboard.

\begin{rem}\label{rem: natural_condition}
To see why this is a natural condition, consider the simple scenario where there exists $g \in S\smallsetminus H$ and distinct $h$, $ h'\in S \cap H$ with $hgH$ and $Hgh'$ representing a column and row (respectively) which do not contain any element of $S$. Then combining the pair of Nielsen moves:
$$h \mapsto hg$$
$$hg \mapsto hgh'$$
constitutes a left-right extraction of an element (in this case $h$) from $S$. 
\end{rem}

\begin{lem}[Extraction Lemma]
\label{lem: EL}
     Let $G$ be a group, $H\leq G$ be of finite index, and $S\subset G$ a left-right clean multiset such that $|S| \leq [G:H]$. Suppose all of the following hypotheses hold: 

     \begin{enumerate}
         \item $|S \cap H|\geq 2$
         \item There exists $g \in S$ such that $gH$ and $Hg$ have sparse $\gen{S \cap H}$-orbits
     \end{enumerate}

     Then it is possible to left-right extract an element from $S \cap H$.
\end{lem}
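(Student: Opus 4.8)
The plan is to use hypothesis (2) to find an element $g \in S \smallsetminus H$ together with group elements $k, k' \in \gen{S \cap H}$ such that $kgH$ is an empty column and $Hgk'$ is an empty row, and then mimic the argument of \Cref{rem: natural_condition}, using $k$ and $k'$ in place of the single elements $h$ and $h'$. First I would observe that $g$ must lie outside $H$: if $g \in H$, then its left $\gen{S\cap H}$-orbit is all of $H$ (a single column), which contains $g$ itself, so it is not sparse. So $g \in S \smallsetminus H$, and since $S$ is left-right clean, $g$ is the unique element of $S$ in its column and the unique element of $S$ in its row. By sparseness of the left orbit, there is $k \in \gen{S\cap H}$ with $kgH$ an empty column (note $kgH \ne gH$ and $kgH \ne H$ since $kg \notin H$), and by sparseness of the right orbit there is $k' \in \gen{S\cap H}$ with $Hgk'$ an empty row.

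The key step is then to realise the replacement $g \mapsto kgk'$ as a Nielsen transformation on $S$ in a way that does not disturb left-right cleanness. Write $k = h_1^{\varepsilon_1}\cdots h_m^{\varepsilon_m}$ and $k' = (h_1')^{\delta_1}\cdots (h_n')^{\delta_n}$ with $h_i, h_j' \in S \cap H$. Apply the Nielsen moves $g \mapsto h_m^{\varepsilon_m} g$, then $\mapsto h_{m-1}^{\varepsilon_{m-1}}(h_m^{\varepsilon_m}g)$, etc., building up $kg$ from the left, and then $kg \mapsto (kg)(h_1')^{\delta_1}$, etc., building up $kgk'$ from the right. At each intermediate stage the element being moved is multiplied by a generator from $S \cap H \subset H$, so no other element of $S$ is ever changed; in particular the elements of $S\cap H$ and all the other elements of $S \smallsetminus H$ keep their positions throughout. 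After the transformation, $g$ has been replaced by $kgk'$, which lies in the column $kgH$ and the row $Hgk'$ — both previously empty. Since we removed the only occupant $g$ of the old column and old row and placed the new element $kgk'$ into a previously empty column and row, and we only ever left-multiplied/right-multiplied the moving element by elements of $H$ (which cannot move any element of $S\cap H$ out of $H$), the resulting multiset $S'$ is again left-right clean, and $|S' \cap H| = |S \cap H| - 1$: one fewer, because the element that moved was already outside $H$ and stays outside $H$ — wait, that does not decrease $|S\cap H|$.

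Let me correct the final step: to actually decrease $|S\cap H|$ we should move an element \emph{of} $S\cap H$ into the empty column and row. So instead take $h \in S \cap H$ with $h \ne g$ — possible since $|S\cap H|\ge 2$ and $g \notin H$, so in fact \emph{any} $h \in S\cap H$ works. The target column is $kgH$ and target row $Hgk'$; we want to move $h$ to $kgh'' = $ an element sitting in column $kgH$? That does not match rows. The clean fix: apply Nielsen moves $h \mapsto gh$ (putting $h$ into column $gH$, row $Hg$, i.e. exactly where $g$ sits — but $g$ is still there, temporarily creating a collision), then $gh \mapsto k(gh)$ via the word for $k$ as above, moving it to column $kgH$, then $\mapsto (kgh)k'$ — no, that changes the row to $Hghk'$, and since $h \in H$ this row is $Hgk'$ only if $h$ normalises appropriately. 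Actually $Hghk' = H(gh)k'$, and $gh$ lies in row $Hg$, so $H(gh)k' = Hgk'$ precisely — right-multiplication by $k'$ sends row $Hg$ to row $Hgk'$ regardless of which element of that row we pick. Hence after $h \mapsto (kg h)\,k'$ (a composition of Nielsen moves, all using elements of $S\cap H$ except the single use of $g$, which is legitimate since $g \in S$, $g \ne h$), the image of $h$ lies in column $kgH$ and row $Hgk'$, both empty, and meanwhile $g$ and all other elements are untouched. Thus $S'$ is left-right clean and $|S'\cap H| = |S\cap H| - 1$, completing the extraction. The main obstacle is exactly this bookkeeping: checking that the target column $kgH$ and row $Hgk'$ remain simultaneously empty after we have moved $h$ there (they do, since $h$ is the only thing placed there and nothing else moved), and that no intermediate Nielsen move is illegal (each multiplies the moving element by a \emph{distinct} element of $S$, which holds as long as we are careful that the moving element's current "name" is never equal to the generator being applied — this is automatic once the moving element has left $H$).
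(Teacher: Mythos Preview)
Your final argument contains a genuine error. You claim that after the move $h \mapsto gh$ the element $gh$ ``lies in row $Hg$''. This is false in general: $gh \in Hg$ would require $ghg^{-1} \in H$, i.e.\ $h \in g^{-1}Hg$, and $H$ is not assumed normal. Consequently the element $kghk'$ you end up with lies in column $kgH$ (fine) but in row $Hghk'$, not in $Hgk'$, and nothing guarantees that $Hghk'$ is empty. Your parenthetical justification (``right-multiplication by $k'$ sends row $Hg$ to row $Hgk'$ regardless of which element of that row we pick'') is itself correct, but its premise is exactly the false step. Note that the box $kgH \cap Hgk'$ does contain $kgk'$, but you cannot Nielsen-transform $h$ to $kgk'$, since $h$ does not occur in that word; this is precisely the obstruction your first (abandoned) attempt ran into from the other side.

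This difficulty is the real content of the lemma, and the paper's proof addresses it by an extra reduction you have skipped. Using minimality of the word for $k'$ together with left-right cleanness, one first performs a ``row swap'' between $g$ and another $g_j \in S\smallsetminus H$ to arrange that $k'$ is a \emph{single} letter $h_{i}^{\epsilon}$; a symmetric ``column swap'' then arranges that $k$ is a single letter $h_{j}^{\delta}$, without disturbing the row condition. If $i \ne j$ one can now extract $h_j$ via $h_j \mapsto h_j^{\delta}\, g\, h_i^{\epsilon}$: the crucial point is that the extracted element sits on the \emph{left} of $g$, so it is absorbed into $H$ when computing the row, giving $H h_j^{\delta} g h_i^{\epsilon} = H g h_i^{\epsilon}$, which is the known empty row. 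If $i = j$ (so the same letter is needed on both sides and the move above would be illegal), a further short case analysis using a second element $h_2 \in S\cap H$ --- available since $|S\cap H|\ge 2$ --- is required. Your approach cannot be repaired without something equivalent to this reduction.
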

\begin{proof} 
    Let $S\cap H = \set{h_1,\ldots,h_a}$ and $S\smallsetminus H = \set{g_1, \ldots, g_b}$ (note that $S$ is a multiset so $|S\cap H| = a \geq 2$, but distinct indices do not necessarily correspond to distinct elements). The first aim is to show that suitable Nielsen transformations reduce the hypotheses to a situation which is only slightly harder than the simple scenario presented in \Cref{rem: natural_condition}; that is, we are looking for an element $g \in S\smallsetminus H$, elements $h_l, h_k \in H\cap S$ and $\delta, \epsilon = \pm 1$ such that the cosets $h_{l}^{\delta}gH$,  $Hgh_{k}^{\epsilon}$ are empty. We do it in two stages: we first find $g'_1$, $h_k$, $\epsilon$ such that $Hg_1h_k^{\epsilon}$ is empty. Then we modify the element $g_1'$ to $g_1''$ by a Nielsen transformation, and find an $h_f$ and $\delta$ such that $h_l^{\delta}g_1''H$ is also empty.
    
    Taking $g_1$ satisfying the second condition of the lemma, then there exists $h_{i_{1}},\ldots, h_{i_{n}}$ (not necessarily distinct indices) and $\epsilon_{1},\ldots, \epsilon_{n} \in \set{-1, 1}$ such that $$Hg_{1}h_{i_{1}}^{\epsilon_{1}}\ldots h_{i_{n}}^{\epsilon_{n}}$$ does not contain any elements of $S$. We may assume $n$ is minimal. If $n=1$ we proceed to the next step. Suppose then that $n > 1$ so, by minimality we have $$Hg_{1}h_{i_{1}}^{\epsilon_{1}}\ldots h_{i_{n-1}}^{\epsilon_{n-1}} = Hg_{j} \ \ \ \ \textnormal{ for some } j \neq 1$$ And so now we make the pair of Nielsen transformations:
    $$ g_{1} \mapsto g_{1}h_{i_{1}}^{\epsilon_{1}}\ldots h_{i_{n-1}}^{\epsilon_{n-1}} =: g_{1}'$$
    $$ g_{j} \mapsto g_{j}h_{i_{n-1}}^{-\epsilon_{n-1}}\ldots h_{i_{1}}^{-\epsilon_{1}}=: g_{j}'$$ 
    labeling the new elements of $S$ as $g_{1}', g_{j}'$ respectively. To see pictorially what has happened (\Cref{fig:swap_rows}), the rows of the elements $g_{1}$, $g_{j}$ have been `swapped' and the columns have remained unchanged:
    
    \begin{figure}[h]
        \centering
        \begin{center}
        	\renewcommand{\w}{3}
        	\renewcommand{\h}{3}
        	\begin{tikzpicture}[line cap=round,line join=round,x=1.2cm,y=1.2cm]
        	\foreach \y in {0,1,...,\h}
        	{
        		\draw [line width = 1pt] (0,\y+0.5) -- (\w,\y+0.5);
        	}
        	\foreach \x in {0,1,...,\w}
        	{
        		\draw [line width = 1pt] (\x,0.5) -- (\x,\h+0.5);
        	}
        	\draw[->, line width = 2pt] (3.5,2) -- (4,2);
        	\renewcommand{\w}{3}
        	\renewcommand{\h}{3}
        	\foreach \y in {0,1,...,\h}
        	{
        		\draw [line width = 1pt] (4.5,\y+0.5) -- (\w + 4.5, \y+0.5);     
        	}
        	\foreach \x in {0,1,...,\w}
        	{
        		\draw [line width = 1pt] (\x + 4.5, 0.5) -- (\x + 4.5, \h + 0.5);
        	}
        	\draw (0.5,3) node {$g_1$};
        	\draw (1.5, 2) node {$g_j$};
        	\draw (5,2) node {$g_1'$};
        	\draw  (6,3) node {$g_j'$};
        	\end{tikzpicture}
        \end{center}
        \caption{Nielsen transformation preserving columns but swapping rows.}
        \label{fig:swap_rows}
    \end{figure}
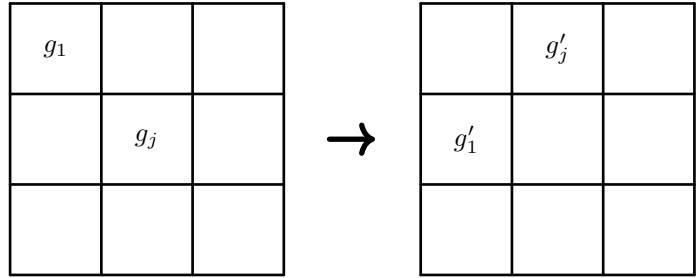

Note that this preserves left-right cleanliness, and the fact that the columns are unchanged is to say that the left cosets are unchanged and in particular the orbits of the left cosets are unchanged. So $g_{1}'H = g_{1}H$ has a sparse $\gen{S\cap H}$-orbit. As before this says that there are indices $j_{1},\ldots, j_{m}$ and signs $\delta_{1},\ldots,\delta_{m}$ such that
$$h_{j_{1}}^{\delta_{1}}\ldots h_{j_{m}}^{\delta_{m}}g_{1}'H$$
does not contain an element of $S$. We now repeat the previous argument but on the left. Suppose $m$ is minimal. If $m = 1$, we have found $h_{j_{1}}^{\delta_{1}}g_{1}'H$, $Hg_{1}'h_{i_{n}}^{\epsilon_{n}}$ both empty. Otherwise $m > 1$ and so
$$ h_{j_{2}}^{\delta_{2}}\ldots h_{j_{m}}^{\delta_{m}}g_{1}'H = g_{k}H$$
where $g_{k}$, $g_{1}'$ are distinct elements of the multiset. Then we make the following Nielsen transformations:
$$ g_{1}' \mapsto h_{j_{2}}^{\delta_{2}}\ldots h_{j_{m}}^{\delta_{m}}g_{1}' =:g_{1}''$$
$$ g_{k} \mapsto  h_{j_{m}}^{-\delta_{m}}\ldots h_{j_{2}}^{-\delta_{2}}g_{k}$$

Once again we can see that this doesn't alter rows, but swaps the columns. In particular it preserves left-right cleanliness.

Now, we know that the cosets $h_{j_1}^{\delta_1} g_1''H$ and $Hg_1''h_{i_n}^{\epsilon_n}$ are empty. If $j_1 \ne i_n$, then $h_{j_1}^{\delta_1}g_1''h_{i_n}^{\epsilon_n}$ lies in an empty row and empty column, so we can perform the desired extraction
$$h_{j_1} \mapsto h_{j_1}^{\delta_1}\cdot g_1'' \mapsto h_{j_1}^{\delta_1}g_1''\cdot h_{i_n}^{\epsilon_n}$$
Thus, let's assume without loss of generality that $j_1 = i_n = 1$ and let's rename $\delta_1 =: \delta$, $\epsilon_n =: \epsilon$ and $g_1'' =: g_1$ (for simplicity).

Now the empty cosets are $h_{1}^{\delta}g_{1}H$ and $Hg_{1}h_{1}^{\epsilon}$. Take $h_2$ (we can do this since $|S\cap H| \ge 2$) and consider the element $h_{2}g_{1}h_{1}^{\epsilon}$. If this lies in a left coset representing an empty column, then we're done (since we extract $h_2 \mapsto h_2 \cdot g_1 \mapsto h_2g_1\cdot h_1^\epsilon$). Otherwise $h_{2}g_{1}H = g_{r}H$ for some $r$. This case requires two Nielsen transformations but the order depends on whether $r = 1$ or $r \neq 1$.

Both of the cases are illustrated with diagrams (Figures~\ref{fig:second_swap} and~\ref{fig:first_swap}), to help visualise the underlying process. The Nielsen transformations are represented with dashed arrows, whereas the elements in the chessboards which are not members of $S$ are underlined. We use this underlining notation from this point onwards in the rest of the paper.

If $r = 1$, we make the following Nielsen moves, as illustrated in \Cref{fig:second_swap}.
$$h_{2} \mapsto h_{2}\cdot g_{1} \mapsto h_{2}g_{1} \cdot h_{1}^{\epsilon}$$
$$g_{1} \mapsto h_{1}^{\delta} \cdot g_{1}$$
Here, since $Hg_1h_1^\epsilon$ is an empty row, it is one different from $Hg_1$. Thus $h_2g_1h_1^\epsilon$ actually lies in a row different to one that $g_1$ lies in. Similarly, $h_1^\delta g_1$ lies in a column different to one of $g_1$.

\begin{figure}[h]
    \centering
    \begin{center}
    	\renewcommand{\w}{3}
    	\renewcommand{\h}{3}
    	\begin{tikzpicture}[line cap=round,line join=round,x=1.2cm ,y=1.2cm]
    	\foreach \y in {0,1,...,\h}
    	{
    		\draw [line width = 1pt] (0,\y+0.5) -- (\w,\y+0.5);
    	}
    	\foreach \x in {0,1,...,\w}
    	{
    		\draw [line width = 1pt] (\x,0.5) -- (\x,\h+0.5);
    	}
    	\draw[->, line width = 2pt] (3.5,2) -- (4,2);
    	\renewcommand{\w}{3}
    	\renewcommand{\h}{3}
    	\foreach \y in {0,1,...,\h}
    	{
    		\draw [line width = 1pt] (4.5,\y+0.5) -- (\w + 4.5, \y+0.5);     
    	}
    	\foreach \x in {0,1,...,\w}
    	{
    		\draw [line width = 1pt] (\x + 4.5, 0.5) -- (\x + 4.5, \h + 0.5);
    	}
    	\draw[->, line width = 2pt] (8, 2) -- (8.5, 2);
    	\renewcommand{\w}{3}
    	\renewcommand{\h}{3}
    	\foreach \x in {0,1,...,\w}
    	{
    		\draw [line width = 1pt] (9 + \x, 0.5) -- (9 + \x, 0.5 + \h);
    	}
    	\foreach \y in {0,1,...,\h}
    	{
    		\draw [line width = 1pt] (9, \y+0.5) -- (9 + \w, \y  +0.5);
    	}
    	
    	\draw (0.5, 3) node {$g_1$};
    	\draw (0.5, 2) node {\underline{$h_2g_1h_1^\epsilon$}};
    	\draw (1.5, 3) node {\underline{$h_1^\delta g_1$}};
    	\draw (-0.5, 1) node {$h_2$};
    	\draw [dashed, ->, line width = 1.5pt, color = black] (-0.5+0.2,1) .. controls (-0.5+0.2+0.3,1) and (0.5,2-0.3-0.2) .. (0.5,2-0.2);
    	
    	\draw (5, 3) node {$g_1$};
    	\draw (5, 2) node {$h_2g_1h_1^\epsilon$};
    	\draw (6, 3) node {\underline{$h_1^\delta g_1$}};
    	\draw [dashed, ->, line width = 1.5pt, color = black] (5, 3+0.2) .. controls (5+0.3, 3+0.2+0.3) and (6-0.3-0.1, 3+0.2+0.3) .. (6-0.1, 3+0.2);
    	
    	\draw (5+4.5, 2) node {$h_2g_1h_1^\epsilon$};
    	\draw (6+4.5, 3) node {$h_1^\delta g_1$};
    	
    	\end{tikzpicture}
    \end{center}
    \caption{Nielsen moves yielding left-right extraction when $r = 1$.}
    \label{fig:second_swap}
\end{figure}

In the second case, $r \neq 1$, we put for simplicity $r=2$. We can make the pair of Nielsen transformations:
$$g_{2} \mapsto h_{2}^{-1} \cdot g_{2} \mapsto h_{1}^{\delta} \cdot h_{2}^{-1}g_{2}$$
$$h_{2} \mapsto h_{2}\cdot g_{1} \mapsto h_{2}g_{1} \cdot h_{1}^{\epsilon}$$
This is a left-right extraction, as shown pictorially in \Cref{fig:first_swap}. In this picture, $g_1$ and $g_2$ are diagonal, the row $Hg_1h_1^\epsilon$ is different to $Hg_1$ and $Hg_2$ because it is empty. Similarly, $h_1^\delta h_2^{-1}g_2H$ is different to $g_1H$ and $g_2H$ because $h_1^\delta g_1 h_1^\epsilon$ lies in an empty row and column.

\begin{figure}[h!]
    \centering
    \begin{center}
    	\renewcommand{\w}{3}
    	\renewcommand{\h}{3}
    	\begin{tikzpicture}[line cap=round,line join=round,x=1.3cm ,y=1.3cm]
    	\foreach \y in {0,1,...,\h}
    	{
    		\draw [line width = 1pt] (0,\y+0.5) -- (\w,\y+0.5);
    	}
    	\foreach \x in {0,1,...,\w}
    	{
    		\draw [line width = 1pt] (\x,0.5) -- (\x,\h+0.5);
    	}
    	\draw[->, line width = 2pt] (3.5,2) -- (4,2);
    	\renewcommand{\w}{3}
    	\renewcommand{\h}{3}
    	\foreach \y in {0,1,...,\h}
    	{
    		\draw [line width = 1pt] (4.5,\y+0.5) -- (\w + 4.5, \y+0.5);     
    	}
    	\foreach \x in {0,1,...,\w}
    	{
    		\draw [line width = 1pt] (\x + 4.5, 0.5) -- (\x + 4.5, \h + 0.5);
    	}
    	\draw[->, line width = 2pt] (8, 2) -- (8.5, 2);
    	\renewcommand{\w}{3}
    	\renewcommand{\h}{3}
    	\foreach \x in {0,1,...,\w}
    	{
    		\draw [line width = 1pt] (9 + \x, 0.5) -- (9 + \x, 0.5 + \h);
    	}
    	\foreach \y in {0,1,...,\h}
    	{
    		\draw [line width = 1pt] (9, \y+0.5) -- (9 + \w, \y  +0.5);
    	}
    	
    	\draw (0.5, 3) node {$g_1$};
    	\draw (1.5, 2) node {$g_2$};
    	\draw (1.5, 1) node {\underline{$h_2g_1h_1^\epsilon$}};
    	\draw (2.5, 2) node {\underline{$h_1^\delta h_2^{-1}g_2$}};
    	\draw (2.5, 1) node {\underline{$h_1^\delta g_1 h_1^\epsilon$}};
    	\draw [dashed, ->, line width = 1.5pt, color = black] (1.5,2+0.2) .. controls (1.5+0.3,2+0.3+0.2) and (2.5-0.3,2+0.3+0.2) .. (2.5,2+0.2);

    	\draw (5, 3) node {$g_1$};
    	\draw (7, 2) node {$h_1^\delta h_2^{-1}g_2$};
    	\draw (6, 1) node {\underline{$h_2g_1h_1^\epsilon$}};
    	\draw (4, 1) node {$h_2$};
    	\draw [dashed, ->, line width = 1.5pt, color = black] (4,1+0.2) .. controls (4+0.3,1+0.3+0.2) and (6-0.3,1+0.3+0.2) .. (6,1+0.2);
    	
    	\draw (9.5, 3) node {$g_1$};
    	\draw (11.5, 2) node {$h_1^\delta h_2^{-1}g_2$};
    	\draw (10.5, 1) node {$h_2g_1h_1^\epsilon$};
    	\end{tikzpicture}
    \end{center}
    \caption{Nielsen moves yielding left-right extraction when $r \neq 1$.}
    \label{fig:first_swap}
\end{figure}
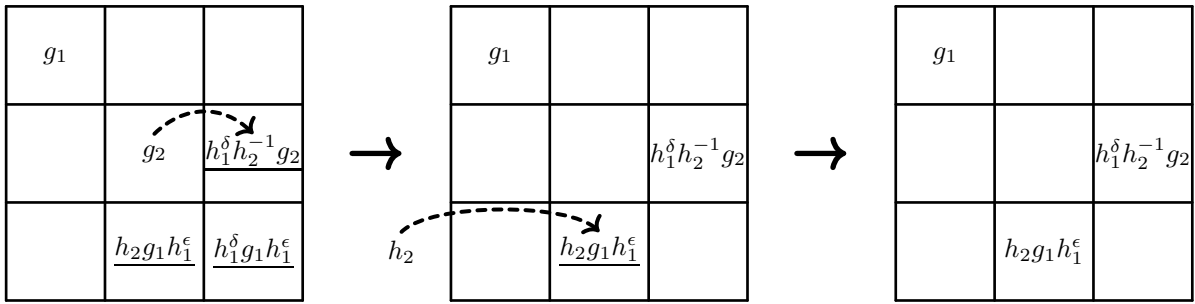

And so in any case, we have left-right extracted an element of $S\cap H$.
\end{proof}
  
This lemma will be very useful in shortening the arguments needed in the case-by-case analysis done in the proof of \Cref{thm: rank4case}, which we do in \Cref{sec:rank4}. More importantly it is the first extraction algorithm which can be applied to a general $H
\leq G$ and (no constraints on the index $[G:H]$), with only combinatorial hypotheses on $S$. For example, below is a neat situation where the hypotheses of \Cref{lem: EL} hold. 

\begin{cor}
    \label{cor: DPL}
    Let $G$, $H$ and $S$ be as in \Cref{lem: EL}. Suppose that there is an element $s \in S$ such that $s^{n}$ lies diagonal to every element of $S$ for some $n$. Then, if $|S \cap H|\geq 2$, a left-right extraction is possible. 
\end{cor}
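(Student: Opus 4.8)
The plan is to reduce to the Extraction Lemma \Cref{lem: EL}. Its first hypothesis, $|S\cap H|\geq 2$, is assumed, so everything comes down to finding an element $g\in S$ for which both $gH$ and $Hg$ have sparse $\langle S\cap H\rangle$-orbits; then \Cref{lem: EL} produces the extraction. The natural candidate is $g:=s$. First I would translate the hypothesis into the chessboard picture: ``$s^n$ is diagonal to every element of $S$'' says precisely that the column $s^nH$ contains no element of $S$ and the row $Hs^n$ contains no element of $S$. Also $s^n\notin H$ — it is diagonal to the elements of $S\cap H$, which all lie in the column $H$ — so $s^nH\neq H$ and $Hs^n\neq H$; hence this empty column and empty row both live in the chessboard of the double coset $Hs^nH$, and by complete-bipartiteness of that chessboard their intersection is a nonempty cell $c$ (indeed $s^n\in c$).

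The reason $s$ is the right candidate is that $s^nH=s^{n-1}(sH)$ and $Hs^n=(Hs)s^{n-1}$, so the empty column $s^nH$ lies in the $\langle s\rangle$-orbit of $sH$, and the empty row $Hs^n$ in the $\langle s\rangle$-orbit of $Hs$. If $s^{n-1}$ happened to be expressible as a word in $S\cap H$ we would be done at once, as then $s^nH$ and $Hs^n$ would lie in the respective $\langle S\cap H\rangle$-orbits.

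The main obstacle is exactly that it is the $\langle S\cap H\rangle$-orbits, not the $\langle s\rangle$-orbits, that appear in \Cref{lem: EL}, and $s^{n-1}\notin H$ (since $s^{n}H\neq sH$), so $s^{n-1}$ is not a word in $S\cap H$ and one cannot simply assert that $s^nH$ lies in the $\langle S\cap H\rangle$-orbit of $sH$ — in very degenerate cases (for instance $G$ abelian, where $\langle S\cap H\rangle$ acts trivially on $G/H$) these orbits are not sparse at all. The way I would get around this is to run the extraction by hand, in the style of the proof of \Cref{lem: EL}, aiming directly at the cell $c$: pick some $h\in S\cap H$, left-multiply its slot by $s\in S$ a total of $n$ times (a Nielsen transformation touching only that slot; one checks the intermediate values $h,sh,\dots,s^{n-1}h$ are never equal to the multiplier $s$, or handles that degenerate case separately), which lands it in the empty column $s^nH$ as $s^nh$. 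If $s^nh$ now occupies a row meeting $S$ — necessarily a single element of $S\setminus H$, by left-right cleanliness — then perform the ``swap'' of \Cref{lem: EL} to move that offending element out of the row while keeping its column fixed (right-multiplications by elements of the remaining $|S\cap H|-1\geq 1$ members of $S\cap H$, paired with a compensating move exactly as in the swap diagrams of \Cref{lem: EL}), iterating until $s^nh\in c$. Since every element of $c$ lies outside $H$ and in a left coset ($s^nH$) and right coset ($Hs^n$) distinct from those of all other elements of $S$, the resulting multiset is left-right clean with $|S\cap H|$ decreased by one — the desired left-right extraction. The delicate point throughout is the row control in that last swap, i.e.\ the sparseness of the relevant $\langle S\cap H\rangle$-orbit of rows; note that it is automatic when $s$ normalises $H$ (in particular when $G$ is abelian), since then the chessboard of $Hs^nH$ is $1\times1$ and $s^nh$ lands in $c$ immediately.
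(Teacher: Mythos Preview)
Your reduction to \Cref{lem: EL} correctly isolates the difficulty, and you rightly note that the hypothesis does not directly give sparse $\langle S\cap H\rangle$-orbits for $sH$ and $Hs$. But the workaround you sketch has a genuine gap. You extract $h\mapsto s^nh$, landing in the empty column $s^nH$, and then want to push the element into the specific box $c=s^nH\cap Hs^n$ via right-multiplications by the remaining elements of $S\cap H$ together with row-swaps. You acknowledge that ``the delicate point throughout is the row control'' --- and indeed you never actually establish it. Once $h$ has been consumed, the remaining elements of $S\cap H$ generate a potentially smaller subgroup, and there is no reason its right action on $H\backslash G$ should take $Hs^nh$ to $Hs^n$; the row-swaps of \Cref{lem: EL} only permute rows already occupied by elements of $S$, they do not manufacture access to a prescribed empty row. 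So ``iterating until $s^nh\in c$'' is unsupported, and aiming at the single box $c$ is too rigid a target.

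The paper's argument sidesteps this with a one-line change of perspective: rather than $s^nh$, consider $hs^n$. This element is \emph{automatically} in the empty row $Hs^n$, so only the column needs checking. If $hs^nH$ is empty, the extraction $h\mapsto hs^n$ is immediate. If $hs^nH=gH$ for some $g\in S\smallsetminus H$ with $g\neq s$, one first performs $g\mapsto h^{-1}g$ (this lands $g$ in the empty column $s^nH$ without changing its row, vacating $gH$) and then $h\mapsto hs^n$. The only remaining case is $g=s$, i.e.\ $hs^nH=sH$; but this says exactly that $h^{-1}\cdot sH=s^nH$ is empty, so the $\langle S\cap H\rangle$-orbit of $sH$ is sparse. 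Running the mirror argument with $s^nh$ then either extracts directly or shows the orbit of $Hs$ is also sparse, and now \Cref{lem: EL} applies. The moral is that the single possible column collision, rather than being an obstacle, \emph{produces} the sparseness hypothesis of \Cref{lem: EL} for free.
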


\begin{proof}
    Suppose $h \in S \cap H$. Consider the element $hs^{n}$. This lies in $Hs^{n}$, which contains no other elements of $S$ by assumption. If $hs^{n}H$ contains no elements of $S$ then we may perform $n$ Nielsen moves to extract $h \mapsto hs^{n}$ and we're done. Otherwise, we have 
    $gH = hs^{n}H$ for some $g \in S\smallsetminus H$. We break this up into two cases.
    \\Case 1: If $s$, $g$ are distinct elements, then we may make the pair of Nielsen moves: $g \mapsto h^{-1}g$, followed by $h \mapsto hs^{n}$ (noting that $g \notin H$ so $g_1$, $h$ are distinct entries). This is the desired left-right extraction because we have moved $g$ to an empty row without changing its column, and then have extracted $h$ to the empty row $Hs^{n}$ and the (newly) empty column $gH$. 
    \\Case 2: $sH = hs^{n}H$. The previous sequence of Nielsen moves would no longer be valid. Instead we can note that the equation implies that $sH$ has a sparse orbit. Hence if we repeat the argument instead considering the right action, with $s^{n}h$ instead of $hs^{n}$, either we're done or we encounter the same barrier, i.e. $Hs = Hs^{n}h$. In this case both $sH$ and $Hs$ both have sparse orbits, and so we can extract by \Cref{lem: EL}.
\end{proof}

Before using this corollary to prove \Cref{thm: divisor_thm}, one further result of elementary group theory is needed. 

\begin{defn}
    For $G$ an arbitrary group and $H$ a finite index subgroup of $G$, the \emph{$H$-exponent} of $g \in G$ is the smallest positive integer $e$ such that $g^e \in H$.
\end{defn}

We will use $H$-exponents to enumerate cases of the proof in the next section. Below we will show that they are divisors of $[G:\core_G(H)]$, which will then allow us to prove \Cref{thm: divisor_thm}. The subgroup $\core_G(H)$ is defined as: 
\begin{defn}\label{defn:core_sg}
    If $G$ is a group with subgroup $H$, the \emph{core subgroup} of $H$ in $G$ is $$\core_G(H) := \bigcap\limits_{g \in G}gHg^{-1}$$
\end{defn}
The core is the largest normal subgroup of $G$ contained in $H$. If $H$ has index $n$, $\core_G(H)$ is the kernel of the homomorphism $G \rightarrow S_n$ induced by the left-multiplication action of $G$ on $H$. The first isomorphism theorem implies that $G/\core_G(H)$ is isomorphic to a subgroup of $S_n$, and in particular $\core_G(H)$ has finite index in $G$. This index is connected to $H$-exponents by the following proposition:

\begin{prop}
    The $H$-exponent of $g\in G$ divides $[G:\core_G(H)]$.
\end{prop}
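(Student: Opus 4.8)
The plan is to observe that the $H$-exponent of $g$ is governed by the action of $g$ on the coset space $G/H$, which factors through the finite quotient $G/\core_G(H)$. Concretely, let $N := \core_G(H)$ and let $\pi \colon G \to G/N$ be the quotient map. Since $N \trianglelefteq G$ and $N \leq H$, we have $g^k \in H$ if and only if $\pi(g)^k = \pi(g^k) \in H/N$; in particular, $g^k \in H$ as soon as $\pi(g)^k = 1$, i.e. as soon as $k$ is a multiple of the order of $\pi(g)$ in the finite group $G/N$. Hence the $H$-exponent $e$ of $g$ divides $\operatorname{ord}_{G/N}(\pi(g))$, which in turn divides $|G/N| = [G:\core_G(H)]$ by Lagrange's theorem. (The finiteness of $[G:\core_G(H)]$ is exactly the remark in the paragraph preceding the statement.)

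The one point that needs a little care is that the $H$-exponent is defined as the smallest positive $e$ with $g^e \in H$, so I should confirm that this smallest $e$ genuinely divides $\operatorname{ord}_{G/N}(\pi(g))$ — it is a priori only a divisor of every $k$ with $g^k \in H$. This is the standard cyclic-subgroup argument: the set $\{k \in \mathbb{Z} : g^k \in H\}$ is a subgroup of $\mathbb{Z}$ (it is closed under addition and negation, since $H$ is a subgroup), hence equals $e\mathbb{Z}$ for its least positive element $e$; since $\operatorname{ord}_{G/N}(\pi(g))$ lies in this set, $e$ divides it. Combining, $e \mid \operatorname{ord}_{G/N}(\pi(g)) \mid [G:\core_G(H)]$.

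There is no real obstacle here; the only thing worth double-checking is the reduction modulo $N$, namely that $g^k \in H \iff \pi(g^k) \in H/N$, which holds precisely because $N \leq H$ (so $H$ is a union of $N$-cosets and $H/N$ makes sense as a subgroup of $G/N$). Everything else is Lagrange's theorem and the elementary fact about subgroups of $\mathbb{Z}$.
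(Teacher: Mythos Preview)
Your proof is correct and is essentially the same argument as the paper's, just phrased more structurally. The paper argues directly: since $g^m \in H$ (implicitly using Lagrange in $G/\core_G(H)$), if $e \nmid m$ then division with remainder gives $g^r \in H$ with $0 < r < e$, contradicting minimality. You package the same two ingredients as ``$\{k : g^k \in H\}$ is a subgroup of $\mathbb{Z}$, hence $e\mathbb{Z}$'' and ``$\operatorname{ord}_{G/N}(\pi(g))$ lies in this set and divides $m$ by Lagrange''; the division-with-remainder step is exactly the proof that nonzero subgroups of $\mathbb{Z}$ are cyclic, so the two arguments coincide.
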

\begin{proof}
    Let $m = [G:\core_G(H)]$, and $e$ be the $H$-exponent of $g$. If the proposition is false, then we may write $m = eq + r$ where $q$, $r$ are integers such that $1 \leq r < e $. But now, because $g^m, g^e \in H$, we have that: $$g^r = g^{m -eq} = g^m(g^e)^{-q} \in H$$ contradicting minimality of $e$. 
\end{proof}

We conclude with the proof \Cref{thm: divisor_thm}, which relies on everything mentioned in this section.

\vspace{\topsep}
\begin{proof}[Proof of \Cref{thm: divisor_thm}]
    Let $d$ be the minimal non-trivial divisor of $[G:\core_G(H)]$. Suppose that $S$ is a generating set of $G$ with size $r$. If $|S\cap H| = 1$ then $S$ is already diagonal, and we may extend it to a left-right transversal of $H$ in $G$. Otherwise, suppose $|S \cap H| \geq 2$ with $S$ left-right clean. It is sufficient to show that a left-right extraction is possible. 
    
    Choose $s \in S\smallsetminus H$, and let $e$ be its $H$-exponent. The previous proposition implies $e \geq d$. Minimality of $e$ implies that the set $A = \set{s^2, \ldots, s^{e-1}}$ is diagonal (every pair of elements lie in distinct rows and columns), and that $A \subset G\smallsetminus H$.
    
    Now suppose that the hypothesis of \Cref{cor: DPL} does not apply. That is, that every element of $A$ belongs to the same row or column of some element of $S \smallsetminus H$. Under this assumption, a chessboard which contains $k$ elements of $S$ can contain at most $2k$ elements of $A$ (counting over each row, and then each column). If we count the elements of $A$ per chessboard, this leads to the inequality $$e - 2 \leq 2|S\smallsetminus H|$$ And because $|S\cap H| \geq 2$, $$e - 2  \leq 2(r-2) $$ But $d$ is the smallest non-trivial divisor of $[G:\core_G(H)]$, so we deduce from the above that $d \leq 2r -2 $. The contrapositive assertion shows that if $d \geq 2r - 1$ we can indeed extract via \Cref{cor: DPL}, as required. 
\end{proof}

\section{Proof of \Cref{thm: rank4case}; transversals as generating sets for rank~4 groups}\label{sec:rank4}
 
In this section we prove \Cref{thm: rank4case}, by performing a case-by-case analysis of where elements in $S$ lie relative to chessboards of $H\leq G$. This is an extension of the main theorem of \cite{transgen} to groups of rank~4. The proof in \cite[Theorem 3.11]{transgen} for groups of rank at most $3$ uses the technique of shifting boxes, but also relies heavily on the fact that having just $3$ elements in the chessboards makes the case-by-case analysis of Nielsen moves tractable. Without further insight into shifting boxes as a general technique, attempting to mimic the arguments in \cite{transgen} for the case when the group has rank 4 quickly leads to case analysis which is too large to carry out. The results of \Cref{sec:shifting}  are enough to allow the argument style of \cite{transgen} to extend the proof technique of \cite[Theorem 3.11]{transgen} to the case where $G$ has rank 4.

In addition to the results of \Cref{sec:shifting}, we will also make use of the inversion map on chessboards.
\begin{defn}\label{defn:inversion}
     Consider the boards $HgH$ and $Hg^{-1}H$. The \emph{inversion map} is the standard map on group elements $x \mapsto x^{-1}$. This naturally induces:
     \begin{itemize}
         \item A map on chessboards via $HgH \mapsto Hg^{-1}H$,
         \item A map between the set of left cosets and the set of right cosets via $gH \mapsto Hg^{-1}$.
     \end{itemize}
If a board is mapped to itself via the inversion map, $HgH = Hg^{-1}H$, we say the board is \emph{self-inverse}.
\end{defn}

\begin{note}
    A chessboard is self-inverse if and only if it contains an element of $H$-exponent $2$. We make use of this in \Cref{sec:self-inverse-chessboards}.
\end{note}

As a map between left and right cosets of $H$, the inversion map commutes with the multiplication action of $G$. What we mean by this is that the diagram in \Cref{fig:diagram} commutes. This simple observation makes the inversion map a useful way to connect the orbits of left and right cosets, and hence we will make use of it to apply \Cref{lem: EL}.

\begin{figure}
    \centering
    \begin{tikzcd}[column sep = 4em, row sep = 4em]
        gH \rar[mapsto]{\textrm{act by } x} \dar[mapsto]{\textrm{inv.}} & xgH \dar[mapsto]{\textrm{inv.}} \\
        Hg^{-1} \rar[mapsto]{\textrm{act by } x} & Hg^{-1}x^{-1}
    \end{tikzcd}
    \caption{The inversion map commutes with the right action of $G$ on $G/H$ and $H\backslash G$.}
    \label{fig:diagram}
\end{figure}

Throughout the proof of \Cref{thm: rank4case}, the term \emph{box} will be used to refer to the intersection of a left and right coset in a double coset (forming a `box' in the corresponding chessboard). Also, we will always have a generating set $S$ of elements whose Nielsen transformations we are considering. Recall a coset or a box is referred to as \emph{empty} if it doesn't contain any elements of $S$. We are now ready to prove \Cref{thm: rank4case}.
\vspace{\topsep}
\begin{proof}[Proof of \Cref{thm: rank4case}]
 
Given the left-right cleaning procedure given in \Cref{sec:graphs}, we may suppose $S$ is left-right clean with respect to $H$. There are either 1,2 or 3 elements of $S$ remaining in $H$.

If there is only 1 element of $S$ in $H$, then $S$ is already diagonal and may be immediately extended to a left-right transversal.

If there are 3 elements of $S$ inside $H$, we may extract one as follows: Suppose $h_1, h_2, h_3$ are the elements of $S$ belonging to $H$, and $g$ is the entry not in $H$. Firstly, if $g$ has $H$-exponent (simply referred to as \emph{exponent} subsequently) at least $3$, then $g^{2}$ lies outside of $H$, and diagonally to $g$. By \Cref{cor: DPL}, we may extract one of the $h_i$ from $H$. Now suppose $g$ has exponent $2$. By the argument given in the proof of  \Cref{prop: left-clean}, there is an empty right coset of the form $Hs_is_j^{\epsilon}$ with $s_i$, $s_j$ elements of $S$. Since each $Hg^{\pm1} = Hg$ and each $h_i \in H$ we see the empty coset must be of the form $Hgh_i^{\epsilon} $ for some $i$, some $\epsilon \in \{\pm1\} $. This implies that the orbit of $Hg$ is sparse under the action of $\gen{h_1, h_2, h_3}$. 
Because $g^2 \in H$, the inversion map preserves the chessboard $HgH$ (that is, its columns are mapped to its rows and vice versa). Furthermore, because $g$ is the only entry of $S$ in the chessboard, and $gH \mapsto Hg$ under inversion, it follows that empty right cosets are mapped to empty left cosets under inversion. In particular $h_i^{-\epsilon}g^{-1}H = h_i^{-\epsilon}gH$ is empty, and so $g$ has a left and right sparse orbit, so we can extract by \Cref{lem: EL}. This process of extraction works for several more difficult configurations occurring later.

Hence we are left with the case where there are 2 elements of $S$ in $H$, and we can label $S$ by $S \cap H = \{h_1, h_2\}$ and $S \smallsetminus H  = \{g_1, g_2\}$. As we have seen above, the exponents of the elements outside of $H$ play an important role. Accordingly the remaining cases are indexed by the exponents of $g_1$ and $g_2$ ($e_1$ and $e_2$ respectively), where $e_{1}, e_{2}>1$ as $g_{1}, g_{2}\notin H$. Reordering so that $e_1 \leq e_2$, the cases are:
\begin{itemize}
      \item[I.] $e_2 \geq 4$
      \item[II.] $e_1 = 2 < e_2$
      \item[III.] $e_1 = e_2 = 3$
      \item[IV.] $e_1 = e_2 = 2$
\end{itemize}

In diagrams, $g_1$ and $g_2$ will be depicted as belonging to the same chessboard. Whilst this is not necessarily the case, generally the arguments made apply equally well in both cases and so will not be made twice.

\subsection*{Case I.}

As $e_2 \geq 4$, we have that $\set{g_2, g_2^{2}, g_2^{3}}$ is a diagonal set lying outside of $H$. If either of $g_2^{2}$ or $g_2^{3}$ lies diagonally to $g_1$, then it necessarily lies diagonally to every entry of $S$. We may therefore extract by \Cref{cor: DPL}. Hence we need only consider the case when neither $g_2^{2}, g_2^{3}$ lies diagonally to $g_1$.  That is, they both lie in $g_1H \cup Hg_1$.

The powers of $g_{2}$ are mutually diagonal so in this case neither power can lie in $g_1H \cap Hg_1$. That is, exactly one of $g_2^{2}$, $g_2^{3}$ lies in the row of $g_1$ (and not the column), and the other lies in the the column (and not the row). Suppose $g_2^{i}H = g_1H$, then perform $g_1 \mapsto g_2^{-i}g_1 = h_3 \in H$. Now, within the chessboard $Hg_1H$ there is the diagonal set $\set{g_2^{2}, g_2^{3}}$ and potentially also $g_2$.

We have $3$ elements in $H$, and to achieve our goal we must extract $2$ of them from $H$. To do this we will extract one element to such a position that we still have a diagonal power of $g_2$, i.e. a power of $g_2$ is diagonal to everything in the Nielsen transformed multiset $S$. Then we can apply \Cref{cor: DPL} to extract the other.

Consider the elements $h_ig_2^{2}$ for $i = 1,2,3$. These each lie in $Hg_2^{2}$, and if one lies outside of the columns $g_2^{3}H$, $g_2H$ then we can extract it, noting that $g_2^{3}$ remains diagonal to the arrangement. If none of the $h_ig^{2}$ lie outside of the two columns $g_2^{3}H$, $g_2H$, then by the pigeonhole principle, without loss of generality $h_1g_2^{2}$, $h_2g_2^{2}$ lie in the same columns. Hence $h_1^{-1}h_2g_2^{2} \in g_2^{2}H$ and so $h_1^{-1}h_2g_2^{2} \in g_2^{2}H \cap Hg_2^{2}$. After extracting $h_2 \mapsto h_1^{-1}h_2g_2^{2}$, $g_2^{3}$ remains diagonal to the entire configuration, so we may extract again by \Cref{cor: DPL}. \Cref{fig:rank4_zeroth_fig} shows this extraction, assuming that $g_2^2 \in Hg_1$ and $g_2^2 \in g_1H$ (transposing $g_2^2$ and $g_2^3$ doesn't alter the argument).

\begin{figure}[h!]
    \centering
    \begin{center}
    	\renewcommand{\w}{3}
    	\renewcommand{\h}{3}
    	\begin{tikzpicture}[line cap=round,line join=round,x=1.4cm,y=1.4cm]
    	\foreach \x in {0,1,...,\w}
    	{
    		\draw [line width = 1pt] (\x-0.5, 0-0.5) -- (\x-0.5, 3-0.5);
    	}
    	\foreach \y in {0,1,...,\h} 
    	{
    		\draw [line width = 1pt] (0-0.5, \y-0.5) -- (3-0.5,\y-0.5);
    	}
    	
    	\draw (0, 0) node {\underline{$g_2^3$}};
    	\draw (0, 2) node {\underline{$g_1$}};
    	\draw (1, 1) node {$g_2$};
    	\draw (2, 2.2) node {\underline{$g_2^2$}};
    	\draw (2, 1.8) node {\underline{$h_1^{-1}h_2g_2^2$}};
    	\draw (-1, 1) node {$h_2$};
    	
    	\draw [->, dashed, line width = 1.5pt, color = black] (-0.8, 1) .. controls (-0.8+0.6, 1) and (1.5-0.4, 1.8) .. (1.5, 1.8);
    	
    	\end{tikzpicture}
    \end{center}
    \caption{The extraction of $h_2$ to $h_1^{-1}h_2g_2^{2}$. Note $g_2^{3}$ is diagonal to $\set{g_2, h_1^{-1}h_2g_2^2}$}
    \label{fig:rank4_zeroth_fig}
\end{figure}
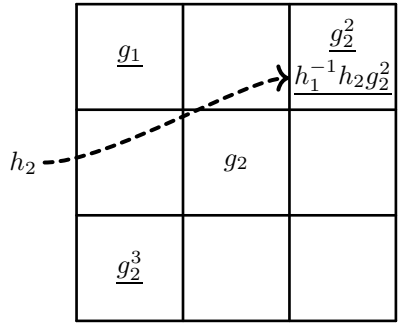

\subsection*{Case II.}
  
In this case $e_1 = 2$ and so the inversion map is useful. Because $e_2 \neq 2$, we have that $1 \not \equiv -1 \mod{e_2}$ and hence $\set{g_2, g_2^{-1}}$ is a diagonal set. The inversion map gives a bijection between left and right cosets of $H$, and because $Hg_1 \neq Hg_2$ we have $g_2^{-1}H \neq g_1^{-1}H = g_1H$ and similarly on the right. Hence $\set{g_1, g_2, g_2^{-1}}$ is diagonal, and so we may extract by \Cref{cor: DPL}.

\subsection*{Case III.}

We consider the set of inverses $\set{g_1^{-1}, g_2^{-1}}$. From remarks on the inversion map, this new pair of elements is diagonal. Furthermore, neither of the $g_i$ have exponent $2$, so $\set{g_i, g_i^{-1}}$ is diagonal. It follows that if $g_1^{-1}$ lies diagonally to $g_2$, or vice-versa, then we may extract by \Cref{cor: DPL}. Therefore, we may suppose that neither inverse lies diagonally to $\set{g_1, g_2}$ and so $g_1^{-1} \in g_2H \cup Hg_2$, and  $g_2^{-1} \in g_1H \cup Hg_1$. 

Within this configuration, we firstly consider the special case where $g_1^{-1} \in g_2H \cap Hg_2$. This says that the inverse of $g_i$ lies in the exact chessboard box of the other, so that $Hg_2^{-1} = Hg_1$ and $g_2^{-1}H = g_1H$ (by inverting we can see that the situation is symmetric with respect to $g_1$, $g_2$). Consider an empty right coset of the form $Hs_is_j^{\epsilon}$ where $\epsilon \in \{\pm 1\}$. In this subcase we have $$ Hg_ig_j^{\epsilon} = Hg_j^{\epsilon - 1} \in \set{H , Hg_1, Hg_2}$$ In particular the empty coset $Hs_is_j^{\epsilon}$ must be of the form $Hg_ih_j^{\epsilon}$. By symmetry, we may assume $Hg_1h_1$ is empty. Applying the same argument to the left shows that there is an empty left coset of the form $h_k^{\delta}g_jH$ for $\delta \in \{ \pm 1 \}$.

If $j = 1$, then $g_1$ has a left and right sparse orbit (under the $\gen{h_1, h_2}$ action) so we're done by \Cref{lem: EL}. So suppose $j = 2$. Hence $Hg_1h_1$, $h_k^{\epsilon}g_2H$ are the pair of empty cosets. This says that $g_1$ has a right-sparse orbit and $g_2$ has a left-sparse orbit.

If we are able to deduce that $g_1$, $g_2$ have the same left or right coset orbit (under the $\gen{h_1, h_2}$ action) then we're done, as this will imply that one of $\set{g_1, g_2}$ has a left and right sparse orbit.

To show this, consider the element $ x = g_1h_k^{\epsilon}g_2$. Immediately one can see that $x \notin g_1H \cup Hg_2$. Furthermore $x \notin H$, because otherwise we would have $h_k^{\epsilon}g_2H = g_1^{-1}H = g_2H$ which contradicts the emptiness of $h_k^{\epsilon}g_2H$. If $x$ lies diagonally to $\set{g_1, g_2}$, then we may extract via $h_k \mapsto g_1h_k^{\epsilon}g_2$. Else we have $x \in Hg_1$ or $x \in g_2H$ (considering the cases we have just eliminated). Examining both of these cases: $$ x \in Hg_1 \iff Hg_1h_k^{\epsilon}g_2 = Hg_1 \implies Hg_1h_k^{\epsilon} = Hg_1g_2^{-1} = Hg_2^{-2} $$
$$x \in g_2H \iff g_1h_k^{\epsilon}g_2H = g_2H \implies h_k^{\epsilon}g_2H = g_1^{-1}g_2H = g_1^{-2}H $$ Recalling that $g_1$ and $g_2$ have exponent $3$, the two equations at the end of each line may be written as: $$ Hg_1h_k^{\epsilon} = Hg_2$$ $$ h_k^{\epsilon}g_2H = g_1H$$
Either case implies that $g_1,g_2$ have the same $\gen{h_1,h_2}$-orbit as desired.
    
The above argument deals with the special case where neither inverse $g_1^{-1}$, $g_2^{-1}$ was diagonal to $S$, and the inverse of $g_2$ lies in $g_1H \cap Hg_1$. It remains to consider the case where $g_2^{-1} \in g_1H \Delta Hg_1$, where $\Delta$ denotes symmetric difference of sets. This implies that $g_1^{-1} \in g_2H \Delta Hg_2$. In particular: $$g_i^{-1} \in Hg_j \iff g_j^{-1} \in g_iH $$ Hence, without loss of generality we suppose $g_1^{-1} \in Hg_2 \smallsetminus g_2H$ and so $g_2^{-1} \in g_1H \smallsetminus Hg_1$; this is illustrated in \Cref{fig:rank4_first_fig}. This figure shows two conventions which we adopt from here on: by a suitable permutation of the rows and columns of the chessboards, the inversion map corresponds to reflection in the main diagonal (this clarifies arguments in \Cref{sec:self-inverse-chessboards}), and that the elements outside of $S$ (prior to any of the transformations which might also be drawn) are underlined.

\begin{figure}[h]
    \centering
    \begin{center}
    	\renewcommand{\w}{3}
    	\renewcommand{\h}{3}
    	\begin{tikzpicture}[line cap=round,line join=round,x=1.2cm,y=1.2cm]
    	\foreach \x in {0,1,...,\w}
    	{
    		\draw [line width = 1pt] (\x, 0) -- (\x, 3);
    	}
    	\foreach \y in {0,1,...,\h} 
    	{
    		\draw [line width = 1pt] (0, \y) -- (3,\y);
    	}
    	
    	\draw (0.5, 1.5) node {$g_1$};
    	\draw (0.5, 0.5) node {\underline{$g_2^{-1}$}};
    	\draw (1.5, 2.5) node {\underline{$g_1^{-1}$}};
    	\draw (2.5, 2.5) node {$g_2$};
    	\end{tikzpicture}
    \end{center}
    \caption{The configuration being considered so far (restricting to the appropriate $3\times 3$ minor after permuting rows and columns appropriately).}
    \label{fig:rank4_first_fig}
\end{figure}

The main aim of the next calculation is to reduce this subcase to the situation dealt with earlier in Case III, where  $g_1^{-1} \in g_2H \cap Hg_2$. Consider the element $y = g_2g_1^{-1}$. We note that $y \in Hg_1$, and $y \notin g_2H$. If $ y \notin g_1H$ then consider $g_1 \mapsto y$ (which is a Nielsen move). The set $\set{g_2, y}$ is still diagonal, but now $g_2^{-1}$ is diagonal to $\set{y, g_2}$, and so we're done by \Cref{cor: DPL}. This is shown in \Cref{fig:rank4_second_fig} for the case when $g_1$, $g_2$ belong to the same chessboard:

\begin{figure}[h]
    \centering
    \begin{center}
    	\renewcommand{\w}{4}
    	\renewcommand{\h}{4}
    	\begin{tikzpicture}[line cap=round,line join=round,x=1.2cm,y=1.2cm]
    	\foreach \x in {0,1,...,\w}
    	{
    		\draw [line width = 1pt] (\x, 0) -- (\x, 4);
    	}
    	\foreach \y in {0,1,...,\h} 
    	{
    		\draw [line width = 1pt] (0, \y) -- (4,\y);
    	}
    	
    	\draw (0.5, 2.5) node {$g_1$};
    	\draw (1.5, 3.5) node {\underline{$g_1^{-1}$}};
    	\draw (0.5, 1.5) node {\underline{$g_2^{-1}$}};
    	\draw (2.5, 3.5) node {$g_2$};
    	\draw (3.5, 2.5) node {\underline{$y$}};
    	
    	\draw [->, dashed, line width = 1.5pt, color = black] (0.5+0.2, 2.5+0.1) .. controls (0.5+0.2+0.3, 2.5+0.1+0.2) and (3.5-0.2-0.3, 2.5+0.1+0.2) .. (3.5-0.2, 2.5+0.1);
    	\end{tikzpicture}
    \end{center}
    \caption{Example configuration of various elements, where $y = g_2g_1^{-1}$. Note however that $y$ may be in the same column as $g_1^{-1}$.}
    \label{fig:rank4_second_fig}
\end{figure}

If, instead, $y$ belongs to $g_1H$ then $y \in g_1H\cap Hg_1$, and so $y^{-1} \in g_1^{-1}H \cap Hg_1^{-1}$. Making the Nielsen move $g_2 \mapsto y^{-1}$, we see that $\set{g_1, y^{-1}}$ is diagonal, and also $g_1^{-1} \in yH\cap Hy$, $y \in g_1H \cap Hg_1$ (in particular $y^{-1}$ doesn't have exponent $2$). Either $y^{-1}$ has exponent greater than $3$, in which case we can extract by appealing to Case I, or $y^{-1}$ has exponent $3$, and we are done by the exact argument used in the first paragraph of Case III in which the inversion map swaps the boxes of the two elements of $S$ lying outside of $H$. We illustrate this in \Cref{fig:rank4_third_fig}.

\begin{figure}[h]
    \centering
    \begin{center}
    	\renewcommand{\w}{3}
    	\renewcommand{\h}{3}
    	\begin{tikzpicture}[line cap=round,line join=round,x=1.3cm,y=1.3cm]
    	\foreach \x in {0,1,...,\w}
    	{
    		\draw [line width = 1pt] (\x-0.5, 0-0.5) -- (\x-0.5, 3-0.5);
    	}
    	\foreach \y in {0,1,...,\h} 
    	{
    		\draw [line width = 1pt] (0-0.5, \y-0.5) -- (3-0.5,\y-0.5);
    	}
    	
    	\draw (0, 1) node {$g_1$, \underline{$y^{-1}$}};
    	\draw (0, 0) node {\underline{$g_2^{-1}$}};
    	\draw (1, 2) node {\underline{$y$}, \underline{$g_1^{-1}$}};
    	\draw (2, 2) node {$g_2$};
    	
    	\draw [->, dashed, line width = 1.5pt, color = black] (2-0.1, 2+0.15) .. controls (2-0.1-0.2, 2+0.2+0.2) and (0.8+0.2, 2+0.2+0.2) .. (0.8, 2+0.1);
    	\end{tikzpicture}
    \end{center}
    \caption{Configuration which reduces to the second subcase of Case III.}
    \label{fig:rank4_third_fig}
\end{figure}

\subsection*{Case IV.}
Observe that an element $g_i$ of exponent $2$ satisfies $g_iH = g_i^{-1}H$, $Hg_i = Hg_i^{-1}$. From this it follows that having a one-sided sparse orbit is sufficient to have a left-right sparse orbit. For example if the orbit of $Hg_1$ is sparse, then $Hg_1x$ contains no entry of $S$ for some $x \in \gen{h_1, h_2}$. Then if we consider the image of this right coset under the inversion map $x^{-1}g_1^{-1}H = x^{-1}g_1H$, and note that the set of full cosets (those containing at least one element of $S$) is preserved by inversion, it follows that $x^{-1}g_1H$ is empty and hence $g_1$ has a right-and-left sparse orbit. This is shown in \Cref{fig:rank4_fourth_fig}. In light of this observation, we may suppose that no cosets of $g_1$, $g_2$ have sparse orbits (or else we may extract by \Cref{lem: EL}).

\begin{figure}[h]
    \centering
    \begin{center}
    	\renewcommand{\w}{3}
    	\renewcommand{\h}{3}
    	\begin{tikzpicture}[line cap=round,line join=round,x=1.3cm,y=1.3cm]
    	\foreach \x in {0,1,...,\w}
    	{
    		\draw [line width = 1pt] (\x-0.5, 0-0.5) -- (\x-0.5, 3-0.5);
    	}
    	\foreach \y in {0,1,...,\h} 
    	{
    		\draw [line width = 1pt] (0-0.5, \y-0.5) -- (3-0.5,\y-0.5);
    	}
    	
    	\draw (0, 0) node {\underline{$g_1x$}};
    	\draw (0, 2) node {$g_1$, \underline{$g_1^{-1}$}};
    	\draw (1, 1) node {$g_2$, \underline{$g_2^{-1}$}};
    	\draw (2, 2) node {\underline{$x^{-1}g_1$}};
    	\end{tikzpicture}
    \end{center}
    \caption{The empty row $Hg_1x$ is mapped to the empty column $x^{-1}g_1H$ under inversion.}
    \label{fig:rank4_fourth_fig}
\end{figure}

As always, there exists an empty right coset of the form $Hs_is_j^{\epsilon}$, where $s_i$, $s_j \in S$. It must be the case that $s_i \notin H$ (because $Hg_i^{\pm1} = Hg_i$ is clearly non-empty). Furthermore, because we have assumed to have no sparse orbits, it must also be the case that $s_j \notin \set{h_1, h_2}$. Hence there is an empty right coset of the form $Hg_ig_j^{\epsilon}$ (where $i$, $j$ are necessarily distinct). By symmetry between the $g_i$s and their inverses, we may suppose that $\epsilon = 1$, so that $Hg_1g_2$ is empty. 

By assumption, sparse orbits do not exist, so $Hg_1g_2h_1$ is also empty. If $g_1g_2H$ is also empty, then $g_1g_2h_1$ lies diagonally to $S$ and hence the Nielsen transformation $h_1 \mapsto g_1g_2h_1$ is an extraction. If instead, $g_1g_2H$ contains an entry of $S$ then it must be the case that $g_2H = g_1g_2H$. This is because $H \neq Hg_1g_2 $ implies $H \neq g_1g_2H$, and $g_2H \neq H$ implies $g_1g_2H \neq g_1H$. The configuration is shown in \Cref{fig:rank4_fifth_fig}. From the equations $g_1g_2H = g_2H = g_2^{-1}H$ it follows that $g_2g_1g_2 \in H$ and so $g_1(g_2g_1g_2) = (g_1g_2)^{2} \notin H$. It can be seen from \Cref{fig:rank4_fifth_fig}, the Nielsen transformation $\set{g_1, g_2} \mapsto \set{g_1, g_1g_2}$ preserves diagonality and increases an exponent: $(g_1g_2)^{2} \notin H$. Hence our ability to extract from this configuration is covered by one of the cases I, II or III.

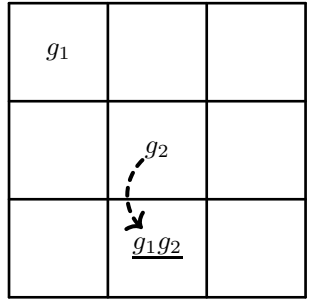
\begin{figure}[h]
    \centering
    \begin{center}
    	\renewcommand{\w}{3}
    	\renewcommand{\h}{3}
    	\begin{tikzpicture}[line cap=round,line join=round,x=1.3cm,y=1.3cm]
    	\foreach \x in {0,1,...,\w}
    	{
    		\draw [line width = 1pt] (\x-0.5, 0-0.5) -- (\x-0.5, 3-0.5);
    	}
    	\foreach \y in {0,1,...,\h} 
    	{
    		\draw [line width = 1pt] (0-0.5, \y-0.5) -- (3-0.5,\y-0.5);
    	}
    	
    	\draw (0,2) node {$g_1$};
    	\draw (1,1) node {$g_2$};
    	\draw (1,0) node {\underline{$g_1g_2$}};
    	
    	\draw [->, dashed, line width = 1.5pt, color = black] (1-0.15,1-0.1) .. controls (1-0.15-0.2,1-0.1-0.2) and (1-0.15-0.2,0+0.2+0.2) .. (1-0.15,0+0.2);
    	\end{tikzpicture}
    \end{center}
    \caption{The Nielsen move $g_2 \mapsto g_1g_2$ takes $\set{g_1, g_2}$ to the diagonal set $\set{g_1, g_1g_2}$.}
    \label{fig:rank4_fifth_fig}
\end{figure}

\end{proof}

\section{Results in the finite setting}\label{sec:finite-setting}

On closer inspection of \Cref{thm: rank4case} and \Cref{thm: divisor_thm}, we see that in each instance we are proving special cases of a slightly stronger question than \Cref{ques: motivating}. That is, it seems that the question we have been approaching so far is:

\begin{question}
\label{main Q}
  Let $S$ be a generating set of minimal size in a finitely generated group $G$, and $H$ a finite index subgroup with $[G:H] \geq |S|.$ Is $S$ Nielsen equivalent to a set which extends to a left-right transversal of $H$ in $G$?
\end{question}

In fact, by examining our statements and proofs of \Cref{thm: rank4case} and \Cref{thm: divisor_thm}, the question we are making progress on is even tighter. We do not need to work with a generating set of minimal size; our results allow us to Nielsen transform any generating set of the group, provided that set has size no greater than $[G:H]$.  That is:

\begin{question}
\label{general Q}
  Let $S$ be a finite multiset generating a group $G$, and $H$ a finite index subgroup with $[G:H] \geq |S|$. Is $S$ Nielsen equivalent to a set which extends to a left-right transversal of $H$ in $G$?
\end{question}

We note that \Cref{main Q} was raised in \cite{transgen}, where it was also noted that an affirmative answer for general $G$ would be implied by an affirmative answer for the case when $G$ is a free group of finite rank. In this section we make a reduction in the opposite direction, to the case where $G$ is a finite group, and use some of our techniques to make progress in this setting.

As noted following \Cref{defn:core_sg}, if $H$ is a finite index subgroup of $G$, then so is $N := \core_G(H)$. We provide the following two elementary lemmata without proof.

\begin{lem}\label{lem:lem1_np}
    Let $H\leq G$, and let $N:= \core_G(H)$. Because $N \leq H$, there is a natural bijection between left (right) cosets of $H/N$ in $G/N$, and left (right) cosets of $H$ in $G$. It follows that two elements of $G/N$ belong to the same left (right) coset of $H/N$ if and only if any two representatives of the elements lie in the same left (right) cosets of $H$ in $G$.
\end{lem}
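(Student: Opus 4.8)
The plan is to unwind this from the standard correspondence (lattice isomorphism) theorem, using only that $N=\core_G(H)$ is a normal subgroup of $G$ contained in $H$. First I would record that, since $N$ is normal in $G$, it is \emph{a fortiori} normal in $H$; hence both quotients $G/N$ and $H/N$ are groups with $H/N \leq G/N$, so the statement about (left or right) cosets of $H/N$ in $G/N$ is meaningful.

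Next I would introduce the natural surjection $\pi\colon G/N \to G/H$ on left cosets, defined by $gN \mapsto gH$. The only point to verify is well-definedness: if $gN = g'N$ then $g^{-1}g' \in N \le H$, so $gH = g'H$ — this is precisely where the hypothesis $N \le H$ is used. The map $\pi$ is evidently onto, and it intertwines the left-multiplication action of $G$ on $G/N$ with the left-multiplication action of $G$ on $G/H$.

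The key computation is to identify the fibres of $\pi$: for $g,g' \in G$ we have $\pi(gN) = \pi(g'N)$ iff $gH = g'H$ iff $g^{-1}g' \in H$ iff $(g^{-1}g')N \in H/N$ iff $gN$ and $g'N$ lie in the same left coset of $H/N$ in $G/N$. Thus the fibres of $\pi$ are exactly the left cosets of $H/N$ in $G/N$, so $\pi$ descends to a bijection between the set of such cosets and $G/H$; and this chain of equivalences is verbatim the ``it follows that\ldots'' assertion, so nothing further is needed there. The right-coset statement follows by the symmetric argument with the right-multiplication action (or by transporting across the inversion bijection $x \mapsto x^{-1}$, which carries left cosets of $H$ to right cosets and is compatible with passing to the quotient by $N$). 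There is no real obstacle: the content is the single observation that $N \le H$ makes $gN \mapsto gH$ well-defined, and the remainder is a routine diagram chase; the only step worth writing out carefully is the fibre computation, which is what the lemma's concluding sentence actually records.
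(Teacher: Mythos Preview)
Your proof is correct. The paper explicitly states this lemma \emph{without proof}, deeming it elementary, so there is no approach to compare against; your argument via the surjection $gN \mapsto gH$ and the fibre computation is exactly the standard verification one would expect.
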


\begin{lem}\label{lem:lem2_np}
     Let $N\trianglelefteq G$. Given a set $S$ in $G/N$, and a corresponding set of representatives $\tilde{S}\subset G$, any Nielsen transformation $S \rightarrow S_1 $ has a naturally corresponding Nielsen transformation $\tilde{S} \rightarrow \tilde{S_1}$, which commutes with the natural projection map $G \rightarrow G/N$. Explicitly, a Nielsen move on $S$ can be replicated on $\tilde{S}$ by performing the same move on the corresponding representatives.
\end{lem}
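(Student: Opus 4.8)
The plan is to prove the statement by induction on the length of the Nielsen transformation $S \to S_1$, lifting it one Nielsen move at a time. Throughout, I would keep track of a fixed bijection between the ``slots'' of the current multiset in $G/N$ (its elements counted with multiplicity) and the slots of the current lifted multiset in $G$, under which every slot upstairs holds a representative (modulo $N$) of the element in the matching slot downstairs. At stage $0$ this is the bijection implicit in the phrase ``a corresponding set of representatives $\tilde S$''. The assertion to be maintained inductively is that after lifting the first $k$ moves one has a multiset $\tilde S^{(k)} \subset G$, a Nielsen transformation $\tilde S \to \tilde S^{(k)}$, and a slot-bijection as above whose underlying map is exactly the one carried by the natural projection $\pi \colon G \to G/N$.

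For the inductive step I would check the three types of Nielsen move. If the downstairs move replaces the element $\bar g$ in some slot $\tau$ by $\bar g^{-1}$, replace the element $g$ in slot $\tau$ of $\tilde S^{(k)}$ by $g^{-1}$; this is a legal Nielsen move and $\pi(g^{-1}) = \pi(g)^{-1} = \bar g^{-1}$, so the square commutes and the slot-bijection is unchanged. If the move replaces $\bar g$ (in slot $\tau$) by $\bar h\bar g$ where $\bar h$ sits in a slot $\tau' \ne \tau$, let $g$ and $h$ be the elements occupying slots $\tau$ and $\tau'$ upstairs; these are distinct slots, so $h$ lies in a slot of $\tilde S^{(k)}$ other than the one holding $g$, and hence replacing $g$ by $hg$ is a legal Nielsen move, with $\pi(hg) = \pi(h)\pi(g) = \bar h\bar g$. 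The case $\bar g \mapsto \bar g\bar h$ is identical. Composing over all $n$ moves yields a Nielsen transformation $\tilde S \to \tilde S_1 := \tilde S^{(n)}$ commuting with $\pi$ move by move, which is exactly the claim.

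The only point that needs a moment's care — and the reason the lemma is ``elementary'' rather than entirely trivial — is the multiset reading of clause~(2) in the definition of a Nielsen move: the hypothesis ``$h \in S\smallsetminus\{g\}$'' must be understood as ``$h$ occupies a slot other than that of $g$'', not as ``$h$ and $g$ are distinct group elements'', since all the multisets here may have repeated entries. With this reading the required distinctness of slots transfers automatically along the slot-bijection, so there is no genuine obstacle. For completeness one would note that \Cref{lem:lem1_np} is an equally routine consequence of the correspondence theorem between subgroups of $G/N$ containing the image of $H$ and subgroups of $G$ containing $N$, so that the two lemmata together let one pass the whole transversal question back and forth between $G$ and the finite quotient $G/N$.
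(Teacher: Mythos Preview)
Your argument is correct: lifting Nielsen moves one at a time along a fixed slot-bijection, using that $\pi$ is a homomorphism, is exactly the natural proof, and your care about the multiset reading of ``$h\in S\smallsetminus\{g\}$'' is well placed. The paper itself states this lemma without proof (``We provide the following two elementary lemmata without proof''), so there is no alternative approach to compare against; what you have written is precisely the routine verification the authors are gesturing at.
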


The reason that \Cref{lem:lem1_np} and \Cref{lem:lem2_np} do not allow us to reduce the question to the case where $G$ is finite is because the homomorphic image of minimal generating set is not necessarily a \emph{minimal} generating set. To get around this, we note that at no point in the previous sections do the arguments rely on the minimality of the generating set. That is, our results thus far make just as much progress towards the slightly more general \Cref{general Q}.

Clearly an affirmative answer to \Cref{general Q} would imply the same for \Cref{main Q}. It is enough to answer \Cref{general Q} in the case where $G$ is finite.

\begin{prop}
\label{finite reduction}
    \Cref{general Q} has an affirmative answer whenever $G$ is finite, if and only if it has an affirmative answer for all $G$. 
\end{prop}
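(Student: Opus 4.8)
The plan is to prove \Cref{finite reduction} by showing that an affirmative answer for finite groups lifts to an affirmative answer for arbitrary finitely generated $G$, the reverse implication being trivial (finite groups are a special case). So suppose \Cref{general Q} holds for all finite groups, and let $G$ be an arbitrary finitely generated group with a finite index subgroup $H$ and a finite multiset $S$ generating $G$ with $|S| \le [G:H]$. Set $N := \core_G(H)$, which is a finite index normal subgroup of $G$ by the discussion following \Cref{defn:core_sg}, so $\bar G := G/N$ is a \emph{finite} group. Let $\bar H := H/N \le \bar G$ and let $\bar S$ be the image of $S$ under the projection $\pi\colon G \to \bar G$; since $S$ generates $G$, $\bar S$ generates $\bar G$, and $|\bar S| = |S| \le [G:H] = [\bar G : \bar H]$ (cosets of $H$ in $G$ biject with cosets of $\bar H$ in $\bar G$ by \Cref{lem:lem1_np}). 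Note $\bar S$ need not be of minimal size, but this is precisely why we are working with \Cref{general Q} rather than \Cref{main Q} — the arguments permit non-minimal generating multisets.

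Next I would apply the finite case of \Cref{general Q} to $(\bar G, \bar H, \bar S)$: there is a Nielsen transformation $\bar S \to \bar S_1$ such that $\bar S_1$ extends to a left-right transversal $\bar T$ of $\bar H$ in $\bar G$. By \Cref{lem:lem2_np}, this Nielsen transformation lifts to a Nielsen transformation $S \to S_1$ of the original multiset, commuting with $\pi$, so $\pi(S_1) = \bar S_1$. It remains to check that $S_1$ extends to a left-right transversal of $H$ in $G$. By \Cref{lem:lem1_np}, two elements of $G$ lie in the same left (resp. right) coset of $H$ iff their images lie in the same left (resp. right) coset of $\bar H$; since the elements of $\bar S_1$ lie in pairwise distinct left cosets and pairwise distinct right cosets of $\bar H$ — they are a subset of a left-right transversal $\bar T$ — the corresponding elements of $S_1$ lie in pairwise distinct left cosets and pairwise distinct right cosets of $H$. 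Hence $S_1$ is a "diagonal" subset in the sense of the chessboard picture, occupying distinct rows and distinct columns. One then completes $S_1$ to a full left-right transversal of $H$ in $G$: for each left-right coset pair not yet represented, choose a representative from the appropriate diagonal entry of the relevant chessboard (using the fact, noted after \Cref{defn:coset-intersection}, that every chessboard has full diagonal entries available), being careful to pick one representative per double coset diagonal so as to simultaneously hit every remaining left coset and every remaining right coset. Since $S_1$ already respects this diagonal structure, such a completion exists, proving \Cref{general Q} for $G$.

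The main obstacle — really the only non-formal point — is the very last step: verifying that a diagonal partial selection in $G$ (equivalently, in $\bar G$, where the reasoning can equally be carried out) can always be completed to a full left-right transversal. This is essentially the standard fact that left-right transversals of a finite index subgroup exist and can be built chessboard-by-chessboard along diagonals, plus the observation that a pre-existing diagonal partial selection only constrains, never obstructs, this process: within each chessboard the occupied rows and columns are matched up among themselves, so one can extend along the diagonal of each component of the coset intersection graph independently. I would either cite the Hall's-marriage-theorem / chessboard-diagonal construction recalled in \Cref{sec:graphs}, or give the one-line argument that restricting each chessboard to its unoccupied rows and unoccupied columns yields a smaller square grid with full diagonal, from which one reads off the remaining representatives. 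Everything else is bookkeeping via Lemmas \ref{lem:lem1_np} and \ref{lem:lem2_np} together with the finiteness of $G/\core_G(H)$.
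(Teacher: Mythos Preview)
Your proof is correct and follows essentially the same route as the paper: quotient by $N=\core_G(H)$, apply the finite case to $(\bar G,\bar H,\bar S)$, lift the resulting Nielsen transformation via \Cref{lem:lem2_np}, and use \Cref{lem:lem1_np} to conclude that the lifted set $S_1$ is diagonal. You are more explicit than the paper in two places---verifying the index hypothesis $|\bar S|\le[\bar G:\bar H]$ and discussing the extension of the diagonal set $S_1$ to a full left-right transversal---but the paper treats both of these as routine (the latter being the standard chessboard-diagonal observation from \Cref{sec:graphs}), so your extra care, while not misplaced, is not where any real difficulty lies.
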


\begin{proof}
    Suppose that the answer to \Cref{general Q} is `yes' whenever $G$ is finite. Let $G$ be arbitrary, $H$ a finite index subgroup and $S$ a generating multiset in $G$.
    
    Letting $\overline{G}$ denote quotient by $\core_G(H)$, $\overline{G}$ is a finite group containing the multiset $\overline{S}$ which generates $\overline{G}$. By assumption there is a Nielsen transformation $\overline{S} \mapsto \overline{S}_1$, such that $\overline{S}_1$ is a set of elements which each lie in distinct left and right cosets of $\overline{H}$. By \Cref{lem:lem2_np}, we may perform a corresponding Nielsen transformation $S \mapsto S_1$. Because this correspondence commutes with the map $G \rightarrow \overline{G}$, it follows from \Cref{lem:lem1_np} that the elements of $S_1$ each lie in distinct left and right cosets of $G$.   
\end{proof}

Thus, while we cannot reduce our motivating question (\Cref{main Q}) to the finite case, we can reduce the question that we would like to answer (\Cref{general Q}) to the finite case. With this reduction in mind, we now present two results for the case where $G$ is finite which give affirmative answers to \Cref{main Q} when $H$ is cyclic or the product of two prime-order cyclic groups. The first argument is completely elementary whilst the second relies on \Cref{lem: EL}.

\begin{lem}
	\label{lem:cyclic-nielsen}
	Suppose $H$ is a finite cyclic group of order $n$ and $\set{x,y} \subset H$. Then $\set{x,y}$ is Nielsen equivalent to a set containing the identity, $e$.
\end{lem}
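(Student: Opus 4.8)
The plan is to run a Euclidean-algorithm-style descent on the exponents of the elements of $H$. Write $H = \langle c \rangle$ with $|c| = n$, so that every element of $H$ is $c^{k}$ for a unique $k \in \{0,1,\dots,n-1\}$; call this $k$ the \emph{label} of the element, and note that the label equals $0$ precisely for $e$. Given the pair $\{x,y\} = \{c^{a}, c^{b}\}$ with, say, $0 \le a \le b \le n-1$, the key move is to replace the element with the larger label by its quotient by the other: $y = c^{b} \mapsto x^{-1}y = c^{b-a}$. Since $0 \le b-a \le b \le n-1$, there is no modular reduction and the label of $x^{-1}y$ is exactly $b-a$; hence the nonnegative integer ``sum of the two labels'' drops from $a+b$ to $a + (b-a) = b$, a strict decrease whenever $x \neq e$ (i.e. $a \ge 1$).

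First I would verify that ``replace $y$ by $x^{-1}y$'' is a genuine Nielsen transformation of the multiset $\{x,y\}$: apply a type-(1) move to get $\{x^{-1}, y\}$, then a type-(2) move replacing $y$ by $x^{-1}\cdot y$ (legal, as $x^{-1}$ is now a member of the multiset distinct from the slot being replaced), then a type-(1) move sending $x^{-1}$ back to $x$, yielding $\{x, x^{-1}y\}$; the version with the roles of $x,y$ swapped is identical. The edge cases $x = y$ and $x^{-1} = y$ cause no trouble because the Nielsen-move framework is set up for multisets, and in any case $x = y$ can be disposed of immediately (then $x^{-1}y = e$ already appears after two moves).

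Then the descent terminates for the usual reason: the sum of the two labels is a nonnegative integer that strictly decreases at every step, so after finitely many steps no such step is available — which, by the previous paragraph, can only occur when one of the two elements is $e$. Hence $\{x,y\}$ is Nielsen equivalent to a set containing $e$, as claimed.

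There is no genuinely hard step here; the only point requiring care is the modular wrap-around, which is exactly why the descent always subtracts the \emph{smaller} label from the larger one — this keeps the new label inside $\{0,\dots,n-1\}$ without reduction, so the monovariant really does decrease. (As a remark, the same argument in fact produces a surviving element of label $\gcd(a,b,n)$, hence one generating $\langle x,y\rangle$, matching the expected analogue over $\mathbb{Z}$; this stronger form is not needed here.)
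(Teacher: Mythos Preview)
Your proof is correct and follows essentially the same approach as the paper: both run a Euclidean-algorithm descent on the exponents, the paper via division-with-remainder (replacing $g^a$ by $g^{a-qb}$ in one batch) with monovariant the minimal positive exponent, and you via single subtraction with monovariant the sum of the two labels. Your version is slightly more careful in verifying that the replacement is an honest composition of Nielsen moves and in handling the $x=y$ edge case, but the underlying idea is the same.
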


\begin{proof}
    Assume that $H = \gen{g}$ and that $x = g^a, y = g^b$ for $a,b \in \set{0,1,\ldots, n-1}$ and $a\geq b$. Then we can perform Euclid's algorithm: if $b \ne 0$ take $q \in \mathbb{N}$ such that $|a-qb|< b$ and do the sequence of Nielsen moves:
    $$\set{x,y} \mapsto \set{xy^{-1},y} \mapsto \ldots \mapsto \set{xy^{-q},y}$$
    Then, we can change the roles of $x$ and $y$ with $y$ and $xy^{-q}$, and proceed with the algorithm.
	
	The minimal positive exponent of $g$ appearing in the set strictly decreases on each iteration, and so this process terminates at $\set{g^{\gcd(a,b)},g^0} = \set{z,e}$.
\end{proof}
We apply this lemma to answer \Cref{general Q} when $H$ is cyclic.

\begin{thm}
	Let $G$ be a finite group, and $H\leq G$ be a cyclic subgroup. Any generating multiset $S$ such that $|S|\leq [G:H]$ can be Nielsen transformed into a set contained in a left-right transversal of $H$ in $G$.
\end{thm}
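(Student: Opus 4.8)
The plan is to reduce to the case where $S$ is left-right clean and then iterate a left-right extraction until only one element of $S$ remains in $H$, at which point $S$ is diagonal and extends to a left-right transversal. By \Cref{prop: left-clean} (applied on the left and then on the right) we may assume $S$ is left-right clean, so it suffices to show: whenever $|S\cap H|\geq 2$, a left-right extraction is possible. Since $|S|\leq[G:H]$ and $|S\cap H|\geq 2$, there is always at least one empty column and at least one empty row in the chessboards.

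The key point is that $H=\gen{g}$ is cyclic, so $\gen{S\cap H}\leq H$ is also cyclic, say $\gen{S\cap H}=\gen{z}$ where $z=g^d$ with $d=\gcd$ of the exponents appearing in $S\cap H$ (together with $n$); here I would invoke \Cref{lem:cyclic-nielsen}, or rather its proof via Euclid's algorithm, to first Nielsen-transform $S\cap H$ so that it contains the identity $e$ and a generator $z$ of $\gen{S\cap H}$ — note this is a Nielsen transformation of the whole of $S$ that only touches the elements of $S\cap H$, so it preserves left-right cleanliness and the positions of the elements outside $H$. Now $|S\cap H|\geq 2$ and one of these elements is $e$; write $S\cap H=\{e,z,\dots\}$. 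The first step is then to check that the hypotheses of \Cref{lem: EL} are met for a suitable $g\in S\smallsetminus H$: we need some $g\in S$ with $gH$ and $Hg$ both having sparse $\gen{z}$-orbits. For a cyclic group $\gen{z}$ acting on the $[G:H]$ columns of a chessboard, an orbit is just a cycle of the permutation $z$; sparseness of the orbit of $gH$ means some column in that cycle is empty. A counting argument should work: in each chessboard, the $\gen{z}$-orbits of columns all have the same size $k$ (dividing $[G:H]$), and similarly for rows. If no column-orbit meeting $S$ were sparse, every column-orbit containing an element of $S\smallsetminus H$ would be entirely full, forcing $|S\smallsetminus H|$ to be a multiple of $k$ and using up all columns in those orbits — but there is an empty column, so some column-orbit of an element of $S\smallsetminus H$ must miss it or we derive a contradiction with the count $|S|\leq[G:H]$; the same on the right. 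One then needs to match up a single $g$ that is sparse on *both* sides, which is where a little care is needed.

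The main obstacle I expect is precisely this last matching step: showing that there is one element $g\in S\smallsetminus H$ whose column-orbit and whose row-orbit are *simultaneously* sparse, rather than two different elements each sparse on one side. When $S\smallsetminus H$ is empty we are already done ($|S\cap H|\geq 2$ is impossible since then $S\subset H$ can't generate $G$ unless $G=H$, contradicting $[G:H]\geq|S|\geq 2$); so $S\smallsetminus H\neq\emptyset$. I would handle the matching either by a pigeonhole/double-counting argument over the $\gen{z}$-orbits in all chessboards at once — the total number of columns equals the total number of rows equals $[G:H]\geq|S|$, while the number of *full* columns and the number of *full* rows are each at most $|S|$, and if every element of $S\smallsetminus H$ failed to be two-sided sparse one could charge empty columns and rows to elements of $S\smallsetminus H$ in a way that exceeds the available count — or, failing a clean global argument, by falling back on the inversion-map trick used in Case IV of \Cref{thm: rank4case}: cyclic $H$ need not have exponent-$2$ elements, so that exact trick may not apply, but a variant using that $\gen{z}$ is abelian (so left and right multiplications commute and the column-orbit and row-orbit structures are governed by the same element $z$) should let us convert a one-sided sparse orbit into a two-sided one. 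Once \Cref{lem: EL} applies, it delivers the left-right extraction, we decrease $|S\cap H|$ by one, re-clean if necessary, and repeat until $|S\cap H|=1$; adjoining representatives of the remaining empty rows/columns (chosen diagonally, as in \Cref{prop: left-clean}) completes $S$ to a generating left-right transversal.
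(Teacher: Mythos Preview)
Your setup is right up through the point where you Nielsen-transform two elements of $S\cap H$ into $\{z,e\}$ using \Cref{lem:cyclic-nielsen}; this is exactly what the paper does. But from there you take a harder road than necessary, and the road has a real hole.

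The paper's argument is much simpler than an appeal to \Cref{lem: EL}: once the identity $e$ is an entry of the multiset $S$, it can be Nielsen-moved to \emph{any} element of $G$. Indeed, pick some $g\in G$ lying diagonally to $S$ (such $g$ exists because $|S|\leq[G:H]$ and $|S\cap H|\geq 2$), write $g=s_{i_1}s_{i_2}\cdots s_{i_n}$ as a product of the non-trivial entries of $S$ (possible since $G$ is finite, so inverses are positive powers), and perform
\[
e \ \mapsto\ e\cdot s_{i_1} \ \mapsto\ s_{i_1}\cdot s_{i_2} \ \mapsto\ \cdots\ \mapsto\ s_{i_1}\cdots s_{i_n}=g.
\]
Each step multiplies the entry that used to be $e$ by a \emph{different} entry of the multiset, so these are valid Nielsen moves. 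This is a left-right extraction in one stroke; no sparse-orbit analysis is needed at all. You had $e$ in your hands and didn't exploit it.

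By contrast, your proposed route via \Cref{lem: EL} requires finding a single $g\in S\smallsetminus H$ whose column-orbit \emph{and} row-orbit under $\gen{z}$ are simultaneously sparse, and you correctly flag this matching step as the obstacle. Your counting sketch does not close it: it is entirely possible that the only empty columns lie in $\gen{z}$-orbits disjoint from those of $S\smallsetminus H$ (e.g.\ in a chessboard containing no element of $S$), so ``some column is empty'' does not force any element of $S\smallsetminus H$ to have a sparse column-orbit. The inversion-map fallback you mention relies on exponent-$2$ behaviour that a general cyclic $H$ need not supply. So as written, the proposal has a genuine gap; the fix is not to patch the matching argument but to use the $e$-walks-anywhere trick above.
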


\begin{proof}
	By left-right cleaning, we may suppose that $S$ is a disjoint union of: 1) a multiset whose elements are in $H$, and 2) a left-right diagonal set of elements outside of $H$.
	
	If $|S\cap H| = 1$, then the set is already diagonal with respect to $H$. Hence we may assume that there are at least two elements, $x, y \in S\cap H$. According to \Cref{lem:cyclic-nielsen}, we can Nielsen transform $\set{x,y}$ to a pair $\set{z,e}$ for some $z\in H$. Therefore we can assume $e \in S\cap H$.
	
	By assumption, $S$ generates $G$, so every element of $G$ can be written as a product of elements of $S$. Pick an element $g \in G$ that lies diagonally to $S$ (such an element exists by the index condition). $g$ can be written as a product of non-trivial elements of $S$, say $s_{i_1}s_{i_2}\ldots s_{i_n}$ Thus, the series
	$$e \mapsto e\cdot s_{i_1} \mapsto s_{i_1}\cdot s_{i_2} \mapsto \ldots \mapsto s_{i_1}s_{i_2}\ldots s_{i_{n-1}}\cdot s_{i_n}$$
	is a series of valid Nielsen moves. This sequence of Nielsen moves amounts to a left-right extraction of an element of $S\cap H$. We may repeat this until $|S \cap H| = 1$.
\end{proof}

It is immediate from the reduction method of \Cref{finite reduction} that the corresponding result which doesn't require $|G| < \infty$ is:
\begin{cor}
Let $G$ be a group with finite generating set $S$, and $H$ a finite index subgroup such that $[G:H] \geq |S|$. If $H/\core_G(H) \cong C_n$ for some $n \geq 1$, then $S$ is Nielsen equivalent to a set contained in a left-right transversal of $H$ in $G$.
\end{cor}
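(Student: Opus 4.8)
The plan is to mirror the argument of \Cref{finite reduction} essentially verbatim, pushing the problem down to the finite quotient $\overline{G} := G/\core_G(H)$ and invoking the preceding theorem (the case of a cyclic subgroup of a finite group). Write $N := \core_G(H)$; since $H$ has finite index in $G$, so does $N$, hence $\overline{G}$ is finite, and by hypothesis $\overline{H} := H/N \cong C_n$ is cyclic. The image $\overline{S}$ of $S$ under the projection $\pi\colon G \to \overline{G}$ is a generating multiset of $\overline{G}$, and since $\pi$ induces a bijection between the left (resp. right) cosets of $H$ in $G$ and those of $\overline{H}$ in $\overline{G}$ (\Cref{lem:lem1_np}), we have $|\overline{S}| \leq |S| \leq [G:H] = [\overline{G}:\overline{H}]$. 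Note that we do not need $\overline{S}$ to be a minimal generating set of $\overline{G}$ — this is precisely why the preceding theorem was stated for arbitrary generating multisets of bounded size rather than for minimal generating sets.

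First I would apply the preceding theorem to $\overline{G}$, $\overline{H}$, $\overline{S}$, obtaining a Nielsen transformation $\overline{S} \rightarrow \overline{S}_1$ such that the elements of $\overline{S}_1$ lie in pairwise distinct left cosets, and in pairwise distinct right cosets, of $\overline{H}$. Then, by \Cref{lem:lem2_np}, this Nielsen transformation can be replicated on the representatives $S$ itself, yielding a Nielsen transformation $S \rightarrow S_1$ with $\pi(S_1) = \overline{S}_1$ and commuting with $\pi$. Finally, \Cref{lem:lem1_np} shows that the elements of $S_1$ lie in pairwise distinct left cosets, and in pairwise distinct right cosets, of $H$ in $G$; that is, $S_1$ is left-right diagonal.

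It then remains only to observe that a left-right diagonal set always extends to a left-right transversal of $H$: within each chessboard — a square grid all of whose diagonal boxes are non-empty, by the structure theorem for $\Gamma_{H,H}^{G}$ — the partial diagonal cut out by the elements of $S_1$ lying in that chessboard can be completed to a full diagonal, and taking the union over all chessboards produces a left-right transversal containing $S_1$. Since every ingredient is already available, I expect no genuine obstacle here; the only points requiring a moment's care are the index bookkeeping $|\overline{S}| \leq [\overline{G}:\overline{H}]$ and the fact, emphasised above, that the preceding theorem does not demand minimality of the generating multiset.
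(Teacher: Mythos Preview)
Your proposal is correct and follows exactly the approach the paper intends: the paper states this corollary as ``immediate from the reduction method of \Cref{finite reduction}'' applied to the preceding theorem, and you have simply spelled out that reduction in detail. One tiny remark: when you write $|\overline{S}| \leq |S|$, you should really regard $\overline{S}$ as a multiset (so $|\overline{S}| = |S|$), since the lifting of Nielsen moves via \Cref{lem:lem2_np} requires keeping track of which preimage in $S$ each element of $\overline{S}$ came from; this is harmless for the inequality you need but matters for the lifting step.
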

A similar corollary follows from the next proposition.

\begin{prop}\label{lem:CpxCp}
Let $G$ be a finite group, and $H \leq G$ such that $H \cong C_p \times C_p$ for some prime $p$. Suppose $S$ is a generating multiset in $G$ such that $[G : H] \geq |S|$. Then $S$ is a Nielsen-equivalent to a set contained in a left-right transversal of $H$ in $G$. 
\end{prop}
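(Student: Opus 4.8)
The plan is to follow the strategy of the cyclic-subgroup theorem as far as possible and then fall back on the Extraction Lemma, \Cref{lem: EL}. First apply left-right cleaning, so that $S$ decomposes as the disjoint union of $S\cap H$ and a left-right diagonal set $S\smallsetminus H$; if $|S\cap H|\leq 1$ then $S$ is already diagonal and extends to a left-right transversal, so assume $|S\cap H|\geq 2$, and it suffices to produce one left-right extraction. The first reduction is to the case $|S\cap H|=2$ with $\langle S\cap H\rangle=H$. Viewing $H\cong C_p\times C_p$ as an $\mathbb{F}_p$-vector space, Nielsen moves among the elements of $S\cap H$ are $\mathbb{F}_p$ row operations; so if $|S\cap H|\geq 3$ a linear dependence lets us replace one element of $S\cap H$ by the identity $e$, and if $\langle S\cap H\rangle$ is cyclic we can likewise arrange $e\in S\cap H$ by \Cref{lem:cyclic-nielsen}. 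Once $e\in S\cap H$ and $|S\cap H|\geq 2$, an extraction is available exactly as in the cyclic case: the double coset $H$ is a $1\times 1$ chessboard which is occupied, so every empty left coset lies in a non-trivial double coset, whose chessboard is square and (by cleanliness) has its elements in distinct rows and columns; an empty column there forces an empty row, and the box at their intersection is non-empty as a set, yielding an element $g\in G$ diagonal to $S$. Writing $g$ as a word in the non-identity elements of $S$ and transforming the $e$-slot along the partial products of this word realises the extraction.

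It remains to treat the case $S\cap H=\{h_1,h_2\}$ with $\langle h_1,h_2\rangle=H$. Here $\langle S\cap H\rangle=H$ acts transitively on the columns (and on the rows) of every chessboard, so the $\langle S\cap H\rangle$-orbit of $g_iH$ is sparse precisely when the chessboard of $g_i$ is not full, i.e. has an empty column; since that chessboard is square and cleanly filled, this happens exactly when $Hg_i$ also has a sparse orbit. Hence, if some element of $S\smallsetminus H$ lies in a non-full chessboard, \Cref{lem: EL} performs the extraction and we are done. Otherwise every chessboard meeting $S\smallsetminus H$ is full. If in addition all such chessboards are $1\times 1$, then every element of $S\smallsetminus H$ normalises $H$, so $G=\langle S\rangle\leq N_G(H)$ and $H\trianglelefteq G$, and we conclude by \cite[Theorem 3.7]{transgen} since for a normal subgroup left and right cosets coincide. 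So we may fix $g\in S\smallsetminus H$ whose chessboard $D$ is full of side $k\in\{p,p^2\}$.

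This is where the hypothesis $H\cong C_p\times C_p$ is essential: the stabiliser $H\cap gHg^{-1}$ of the column $gH$ under left multiplication is a \emph{proper} subgroup of $H$ (since $k>1$), and a generating pair of $C_p\times C_p$ cannot lie entirely inside a proper subgroup, so after possibly swapping $h_1$ and $h_2$ we have $h_1\notin H\cap gHg^{-1}$. Then $h_1g$ lies in row $Hg$ but in a column of $D$ different from $gH$, and as $D$ is full with its elements on the diagonal that box is $S$-empty. The plan is now to shuttle $h_1g$ out of the full chessboards: a counting argument (the full double cosets together with $H$ account for only $(|S\smallsetminus H|+1)\,|H|$ of the $[G:H]\,|H|\geq(|S\smallsetminus H|+2)\,|H|$ elements of $G$) shows that an $S$-free double coset exists, and since $S$ generates $G$ we can right-multiply $h_1g$ by a suitable word in $S\smallsetminus H$ (together with $h_2$ if needed) to move it, step by step through the double-coset structure, into such an $S$-free double coset, where it becomes diagonal to $S$ and completes the extraction of $h_1$ while leaving the rest of $S$ left-right clean.

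\textbf{The main obstacle} is precisely this last shuttling step. Each right-multiplication changes the ambient double coset but may repeatedly land inside full chessboards rather than in an $S$-free one, and there are degenerate sub-cases to dispatch — a translate falling back into $H$, or onto an occupied (diagonal) box, or the available elements of $S\smallsetminus H$ failing to move far enough through the double-coset structure. Showing that some sequence of these moves does reach an $S$-free double coset, using the index bound $|S|\leq[G:H]$ together with the rigidity forced by ``all chessboards full'' and by $\langle S\cap H\rangle=H$, and checking that left-right cleanliness of the untouched part of $S$ is preserved throughout, is where the real work lies; one should expect to need an auxiliary reduction (for instance lowering $|S\smallsetminus H|$ or enlarging an $H$-exponent, as in the Case~IV step of the proof of \Cref{thm: rank4case}) to make the shuttling terminate.
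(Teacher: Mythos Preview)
Your proof is correct and close to the paper's up through the point where you invoke \Cref{lem: EL}: the reduction to $\langle S\cap H\rangle=H$ and the observation that transitivity of $H$ on rows/columns turns ``non-full chessboard containing an element of $S\smallsetminus H$'' into exactly the sparse-orbit hypothesis of the Extraction Lemma is precisely the paper's argument. The divergence is only in the residual case ``every chessboard meeting $S\smallsetminus H$ is full'', and there you have manufactured a difficulty that is not present.

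You correctly note that an $S$-free double coset exists, but then try to \emph{shuttle} $h_1g$ into it by right-multiplication, and you flag this as the main obstacle. It is an obstacle, and an unnecessary one. The simple finish (which the paper uses) is this: since $|S\cap H|\geq 2$ there are at most $[G:H]-2$ occupied columns outside $H$, hence an empty column outside $H$; by the standard \Cref{prop: left-clean} argument there is therefore an empty column of the form $s_2^{\epsilon}s_1H$ with $s_1,s_2\in S$. Now every non-$H$ chessboard meeting $S$ is full by hypothesis, so this empty column must sit inside an $S$-free chessboard. Pick $h\in S\cap H$ with $h\neq s_2$ and perform $h\mapsto s_2^{\epsilon}s_1h$: the target lies in that $S$-free chessboard, so both its row and its column are empty, and this is a left-right extraction in one step. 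No shuttling, no auxiliary reductions, no tracking of degenerate sub-cases. In particular your detour through $h_1\notin H\cap gHg^{-1}$ and the size-$p$ or $p^2$ chessboard is not needed at all.
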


\begin{proof}
 
We use the following property of $C_p \times C_p$: any two non-trivial elements either generate the whole group or are powers of each other. To see this, take $x, y \in C_p\times C_p$ and suppose they are not both trivial. If they don't generate the entire group, they belong to a proper non-trivial subgroup. Such a subgroup is necessarily $C_p$ and so any non-trivial element (in particular one of $x$ and $y$) generates it.

As usual we may suppose $S$ is left-right clean. If $|S\cap H| = 1 $, we're done. Else, suppose we have a distinct pair $x, y \in S \cap H$. Suppose $\gen{x, y} \neq H$. Then by the first remark we (w.l.o.g) have $x = y^k$ for some $k \geq 0$, and we can perform a sequence of Nielsen moves resulting in $x \mapsto x y^{-k} = e$. As in the proof of \Cref{lem:cyclic-nielsen}, we can left-right extract $e$ from $S\cap H$. Hence we may suppose $\gen{x, y} = H$. It follows that $\gen{S\cap H}$ acts transitively on the rows and columns of any chessboard. 

Because $[G: H] \geq |S|$, there is a chessboard with a row and column not containing any element of $S$. Furthermore, we may suppose that there is such a chessboard which also contains an element of $S$ (otherwise, we may easily extract to the empty row of the form $Hs_is_j^{\epsilon}$ as in \Cref{prop: left-clean}, which would also be a left-right extraction). It follows that the element of $S$ in this chessboard has a sparse $\gen{S \cap H}$ orbit on the left and the right, and so we may perform a left-right extraction by \Cref{lem: EL}.

\end{proof}

\section{Further techniques and constructions}\label{sec: configurations}

Motivated by the method of shifting boxes,  we introduce some new techniques along a similar vein which help us resolve more special cases of \Cref{main Q} and \Cref{general Q}. In contrast to the extraction methods, which are useful in the situations of having relatively few elements of the multiset, these new techniques work well when the elements in chessboards are `packed densely'.

This section concerns a technique called an \emph{L-spin} of transforming particular configurations with a specific Nielsen transformation. The technique rests heavily on the fact that the inversion map induces a canonical bijection between the left and right cosets of a given subgroup as described in \Cref{defn:inversion}.

In this section we will always take $G$ to be an arbitrary group and $H\leq G$ a finite index subgroup. We will consider subsets of the chessboards of $H$ in $G$, given as follows:

\begin{defn}\label{defn:configuration}
    Given $H\leq G$ and a multiset $S$ of elements of the group $G$, we define a \emph{configuration} to be a table $T$ representing intersections of cosets $a_iH\cap Hb_j$ for some sets of cosets $\{a_1H,\ldots, a_lH\}$, $\{Hb_1, \ldots, Hb_k\}$, and a submultiset $R\subset S$, such that all elements of $R$ belong to the coset intersections described by the table $T$. We represent the elements by either writing them in the corresponding boxes in the table, or putting dots representing them in the boxes. Sometimes we also denote some arbitrary elements of $G$, not necessarily belonging to the multiset $R$, by writing them in the corresponding boxes underlined or represented by hollow dots.
\end{defn}

We can think of a configuration as ``choosing some rows and columns of a chessboard''; an example of what a configuration can be is given in \Cref{fig:configuration}. An important property of configurations is that if two elements of a chessboard lie in the same row/column and both of them feature in the configuration, then they lie in the same row/column in the configuration too. Similarly, if two elements lie in the same row/column of the configuration, then they lie in the same row/column of the chessboard.

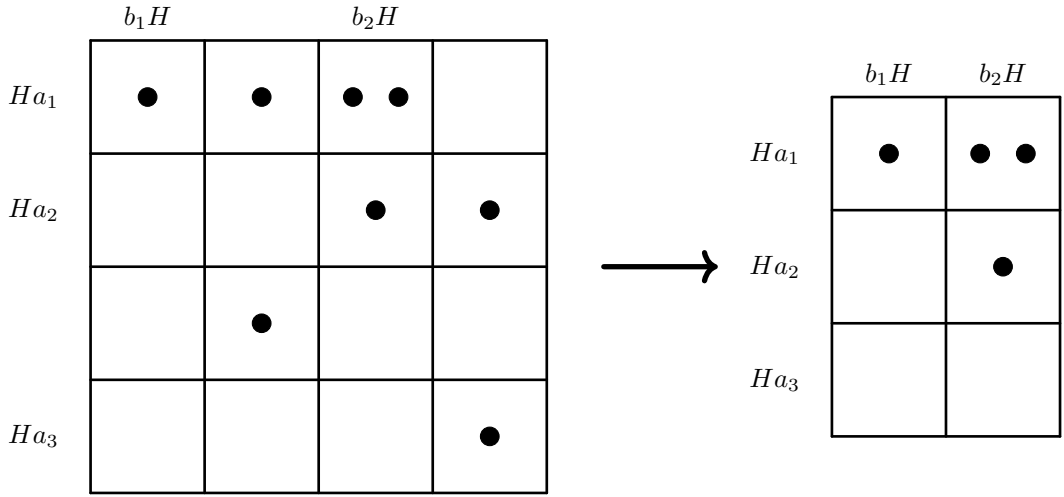
\begin{figure}[h!]
    \begin{center}
    	\renewcommand{\w}{4}
    	\renewcommand{\h}{4}
    	\begin{tikzpicture}[line cap=round,line join=round,x=1.5cm,y=1.5cm]
    	\foreach \y in {0,1,...,\h}
    	{
    		\draw [line width = 1pt] (0,\y) -- (\w,\y);
    	}
    	\foreach \x in {0,1,...,\w}
    	{
    		\draw [line width = 1pt] (\x,0) -- (\x,\h);
    	}
    	\draw[->, line width = 2pt] (4.5,2) -- (5.5,2);
    	\renewcommand{\w}{2}
    	\renewcommand{\h}{3}
    	\foreach \y in {0,1,...,\h}
    	{
    		\draw [line width = 1pt] (0+6.5,\y+0.5) -- (\w+6.5,\y+0.5);
    	}
    	\foreach \x in {0,1,...,\w}
    	{
    		\draw [line width = 1pt] (\x+6.5,0+0.5) -- (\x+6.5,\h+0.5);
    	}
    	\draw (-0.5,3.5) node {$Ha_1$};
    	\draw (-0.5,2.5) node {$Ha_2$};
    	\draw (-0.5,0.5) node {$Ha_3$};
    	\draw (0.5,4.2) node {$b_1H$};
    	\draw (2.5,4.2) node {$b_2H$};
    	
    	\draw (6,3) node {$Ha_1$};
    	\draw (6,2) node {$Ha_2$};
    	\draw (6,1) node {$Ha_3$};
    	\draw (7,3.7) node {$b_1H$};
    	\draw (8,3.7) node {$b_2H$};
    	
    	\draw [fill = black] (0.5,3.5) circle (3.5pt);
    	\draw [fill = black] (1.5,3.5) circle (3.5pt);
    	\draw [fill = black] (1.5,1.5) circle (3.5pt);
    	\draw [fill = black] (2.3,3.5) circle (3.5pt);
    	\draw [fill = black] (2.7,3.5) circle (3.5pt);
    	\draw [fill = black] (2.5,2.5) circle (3.5pt);
    	\draw [fill = black] (3.5,2.5) circle (3.5pt);
    	\draw [fill = black] (3.5,0.5) circle (3.5pt);
    	
    	\draw [fill = black] (7,3) circle (3.5pt);
    	\draw [fill = black] (7.8,3) circle (3.5pt);
    	\draw [fill = black] (8.2,3) circle (3.5pt);
    	\draw [fill = black] (8,2) circle (3.5pt);
    	\end{tikzpicture}
    \end{center}
    \caption{A entire chessboard, and a configuration obtained from that chessboard.}
    \label{fig:configuration}
\end{figure}

\subsection{L-spins}

We start with an elementary result on configurations, which allows us to define the L-spin -- a useful technique. It applies to the configuration shown in \Cref{fig:L-spin set-up}, where $a,b,c$ are elements of $G$ belonging to the multiset $S$.

\begin{figure}[h!]
	\begin{center}
		\renewcommand{\w}{2}
		\renewcommand{\h}{2}
		\begin{tikzpicture}[line cap=round,line join=round,x=1.5cm,y=1.5cm]
		\foreach \y in {0,1,...,\h}
		{
			\draw [line width = 1pt] (0,\y) -- (\w,\y);
		}
		\foreach \x in {0,1,...,\w}
		{
			\draw [line width = 1pt] (\x,0) -- (\x,\h);
		}
		\draw (0.5,1.5) node {$b$};
		\draw (0.5,0.5) node {$c$};
		\draw (1.5,1.5) node {$a$};
		\draw (1.5,0.5) node {\underline{$ab^{-1}c$}};
		\end{tikzpicture}
	\end{center}
    \caption{A configuration we may apply an L-spin to, defined below. The underlined element doesn't necessarily belong to the starting multiset.}
    \label{fig:L-spin set-up}
\end{figure}

\begin{lem}\label{lem:above}
     Given a configuration of a multiset $R$ in a table $T$, if there are elements $a,b,c\in R$ such that $a,b$ belong to the same row (right $H$ coset) and $b,c$ belong to the same column (left $H$ coset), then the element $ab^{-1}c$ lies in the same column as $a$ and the same row as $c$. Furthermore, we can perform Nielsen moves substituting any of the elements of the multiset $\{a,b,c\}$ with $ab^{-1}c$.
\end{lem}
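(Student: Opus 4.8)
The plan is to verify the two assertions of \Cref{lem:above} directly, since everything reduces to elementary coset bookkeeping.

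First I would check that $ab^{-1}c$ lies in the claimed position. Since $a$ and $b$ are in the same row, we have $Ha = Hb$, i.e. $ab^{-1} \in H$. Hence $ab^{-1}c \in Hc$, so $ab^{-1}c$ lies in the same row as $c$. For the column, observe that $b$ and $c$ are in the same column means $bH = cH$, i.e. $b^{-1}c \in H$, so $a(b^{-1}c) \in aH$, placing $ab^{-1}c$ in the same column as $a$. (This uses the property of configurations noted after \Cref{fig:configuration}: being in the same row/column of the configuration is equivalent to being in the same right/left coset of $H$, so the conclusion transfers back to the configuration.) This is the easy part and is just two one-line computations.

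Next I would exhibit the Nielsen transformations. The element $ab^{-1}c$ is a word in $a,b,c$, each of which is a distinct entry of the multiset (they lie in pairwise different boxes — $a\neq b$ as $aH \neq bH$, $b \neq c$ as $Hb \neq Hc$, and $a \neq c$ as they lie in different rows and columns), so Nielsen moves among them are legitimate. To replace $c$: perform $c \mapsto b^{-1}c$ (a pair of Nielsen moves: invert $b$, left-multiply, then one may restore $b$ if desired — or more carefully, $c \mapsto b^{-1}c$ then $b^{-1}c \mapsto ab^{-1}c$), giving $\{a,b,ab^{-1}c\}$. To replace $a$: from $\{a,b,c\}$ do $a \mapsto ab^{-1}$ then $ab^{-1}\mapsto ab^{-1}c$, giving $\{ab^{-1}c, b, c\}$. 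To replace $b$: this is the slightly more delicate one; one route is first to replace $c$ by $ab^{-1}c$ as above to reach $\{a, b, ab^{-1}c\}$, and then express $b$ in terms of the current entries — note $b = a \cdot (ab^{-1}c)^{-1} \cdot \ldots$ does not work directly, so instead replace $a$ first to $\{ab^{-1}c, b, c\}$ and then $b = (ab^{-1}c)^{-1}a\,b$ is awkward; the cleanest is to observe that replacing $b$ by $ab^{-1}c$ is the same as the composite $b \mapsto b^{-1} \mapsto$ (left-multiply by the entry $a$) $ab^{-1} \mapsto$ (right-multiply by the entry $c$) $ab^{-1}c$, which is a sequence of Nielsen moves since $a,c$ remain untouched entries throughout.

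I do not expect any real obstacle here; the only point requiring a little care is making sure that at each stage the element being multiplied in is genuinely a \emph{distinct} entry of the multiset (so that the Nielsen move is of the allowed type ``replace $g$ by $hg$ or $gh$ for $h \in S\setminus\{g\}$''), which is guaranteed by the fact that $a,b,c$ occupy three distinct boxes of the configuration and hence are three distinct elements of $G$. Once that observation is recorded, the proof is a short explicit list of moves together with the two coset computations above.
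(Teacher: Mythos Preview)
Your argument is essentially the paper's: the same two coset computations, and the same three explicit Nielsen sequences (your ``replace $b$'' route $b\mapsto b^{-1}\mapsto ab^{-1}\mapsto ab^{-1}c$ is exactly what the paper writes).

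One small correction: your justification that $a,b,c$ occupy three distinct boxes is not warranted by the hypotheses. The lemma only assumes $Ha=Hb$ and $bH=cH$; it does \emph{not} assume $aH\neq bH$ or $Hb\neq Hc$, and indeed the paper immediately afterwards discusses the degenerate case where some of $a,b,c$ share a box. Fortunately your proof does not actually need distinctness as group elements: the Nielsen moves are legitimate provided $a,b,c$ are three distinct \emph{entries} of the multiset $R$, which is how ``$a,b,c\in R$'' should be read. So drop the parenthetical about pairwise different boxes and the proof goes through unchanged.
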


\begin{proof}
    If $Ha = Hb$ and $bH=cH$, then $ab^{-1}\in H$ and $b^{-1}c\in H$. This means that $ab^{-1}cH = aH$ and that $Hab^{-1}c = Hc$. Thus indeed $ab^{-1}c$ lies in the column of $a$ and the row of $c$.
    Also, any of the following is a sequence of Nielsen moves:
    $$\{a,b,c\} \mapsto \{ab^{-1},b,c\} \mapsto \{ab^{-1}c,b,c\}$$
    $$\{a,b,c\} \mapsto \{a,b,b^{-1}c\} \mapsto \{a,b,ab^{-1}c\}$$
    $$\{a,b,c\} \mapsto \{a,b^{-1},c\} \mapsto \{a,ab^{-1},c\} \mapsto \{a,ab^{-1}c,c\}$$
\end{proof}

This motivates the following definition.

\begin{defn}\label{defn:L-spin}
     Given the configuration from \Cref{lem:above} (i.e. $Ha=Hb$, $bH=cH$), we call the transformation substituting an element of the multiset $\{a,b,c\}$ with the element $ab^{-1}c$ an \emph{L-spin}.
\end{defn}

Visually, an L-spin corresponds to rotating the L-shape formed in the table by the three elements $a,b,c$ in the configuration in question. This is shown in \Cref{fig:L-spin-visual}.

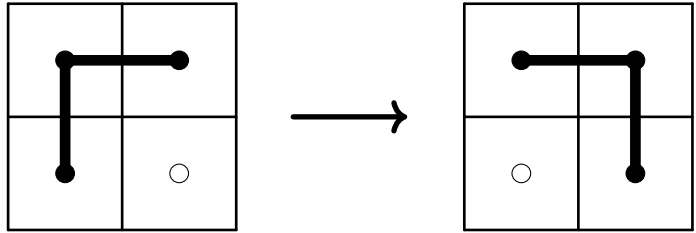
\begin{figure}[h!]
	\begin{center}
		\renewcommand{\w}{2}
		\renewcommand{\h}{2}
		\begin{tikzpicture}[line cap=round,line join=round,x=1.5cm,y=1.5cm]
		\foreach \y in {0,1,...,\h}
		{
			\draw [line width = 1pt] (0,\y) -- (\w,\y);
		}
		\foreach \x in {0,1,...,\w}
		{
			\draw [line width = 1pt] (\x,0) -- (\x,\h);
		}
		\draw [fill = black] (0.5,1.5) circle (3.5pt);
		\draw [fill = black] (0.5,0.5) circle (3.5pt);
		\draw [fill = black] (1.5,1.5) circle (3.5pt);
		\draw [] (1.5,0.5) circle (3.5pt);
		\draw [line width = 4pt] (0.5,0.5) -- (0.5,1.5);
		\draw [line width = 4pt] (0.5,1.5) -- (1.5,1.5);

		\draw [->, line width = 2pt] (2.5,1) -- (3.5,1);

		\foreach \y in {0,1,...,\h}
		{
			\draw [line width = 1pt] (0+4,\y) -- (\w+4,\y);
		}
		\foreach \x in {0,1,...,\w}
		{
			\draw [line width = 1pt] (\x+4,0) -- (\x+4,\h);
		}
		\draw [fill = black] (0.5+4,1.5) circle (3.5pt);
		\draw [] (0.5+4,0.5) circle (3.5pt);
		\draw [fill = black] (1.5+4,1.5) circle (3.5pt);
		\draw [fill = black] (1.5+4,0.5) circle (3.5pt);
		\draw [line width = 4pt] (0.5+4,1.5) -- (1.5+4,1.5);
		\draw [line width = 4pt] (1.5+4,0.5) -- (1.5+4,1.5);
		\end{tikzpicture}
	\end{center}
	\caption{Visual representation of an L-spin.}
	\label{fig:L-spin-visual}
\end{figure}

If some of $a,b,c$ happen to lie in the same box, we can still perform an operation of the above form. We call such a move a \emph{degenerate L-spin}, since the visual representation is somewhat different -- one of the dots in a box with two dots moves to a different non-empty box in the same column/row. This is shown in \Cref{fig:L-spin-degenerate}.

\begin{figure}[h!]
	\begin{center}
		\renewcommand{\w}{2}
		\renewcommand{\h}{2}
		\begin{tikzpicture}[line cap=round,line join=round,x=1.5cm,y=1.5cm]
		\foreach \y in {0,1,...,\h}
		{
			\draw [line width = 1pt] (0,\y) -- (\w,\y);
		}
		\foreach \x in {0,1,...,\w}
		{
			\draw [line width = 1pt] (\x,0) -- (\x,\h);
		}
		\draw [fill = black] (0.33,1.5) circle (3.5pt);
		\draw [fill = black] (0.33,0.5) circle (3.5pt);
		\draw [fill = black] (0.66,0.5) circle (3.5pt);
		\draw [] (0.66,1.5) circle (3.5pt);
		\draw [line width = 4pt] (0.33,0.5) -- (0.66,0.5);
		\draw [line width = 4pt] (0.33,1.5) -- (0.33,0.5);

		\draw [->, line width = 2pt] (2.5,1) -- (3.5,1);

		\foreach \y in {0,1,...,\h}
		{
			\draw [line width = 1pt] (0+4,\y) -- (\w+4,\y);
		}
		\foreach \x in {0,1,...,\w}
		{
			\draw [line width = 1pt] (\x+4,0) -- (\x+4,\h);
		}
		\draw [fill = black] (0.33+4,1.5) circle (3.5pt);
		\draw [fill = black] (0.33+4,0.5) circle (3.5pt);
		\draw [] (0.66+4,0.5) circle (3.5pt);
		\draw [fill = black] (0.66+4,1.5) circle (3.5pt);
		\draw [line width = 4pt] (0.33+4,0.5) -- (0.33+4,1.5);
		\draw [line width = 4pt] (0.33+4,1.5) -- (0.66+4,1.5);
		\end{tikzpicture}
	\end{center}
	\caption{Visual representation of a degenerate L-spin.}
	\label{fig:L-spin-degenerate}
\end{figure}
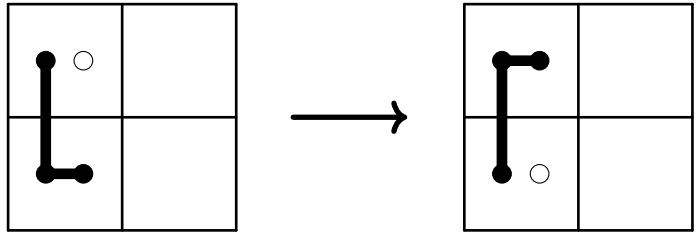

There is a similar, but more powerful construction if the chessboard containing the configuration is self-inverse. This is because it requires a configuration of only two elements instead of three.

\begin{lem}
    Suppose that the element $a$ has $H$-exponent $2$ 
    and that $b\in aH$. Then, the element $b^{-1}ab$ belongs to the row of $b$ and the column of $b^{-1}$. Furthermore, we can make a sequence of Nielsen moves transforming $\{a,b\}$ to $\{b^{-1}ab,b\}$.
    \label{lem:loop-shift}
\end{lem}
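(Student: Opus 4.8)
The plan is to verify the two claims separately, the coset membership first and the Nielsen-move realisation second, both by direct computation exploiting that $a^2 \in H$. First I would unpack the hypotheses: $a$ having $H$-exponent $2$ means $a^2 \in H$ but $a \notin H$, and $b \in aH$ means $b = ah$ for some $h \in H$, equivalently $a^{-1}b \in H$. To locate $b^{-1}ab$, compute its left coset: $b^{-1}ab H = b^{-1}a b H$, and since $a^{-1}b \in H$ we get $bH = aH$, so $b^{-1}abH = b^{-1}a\cdot aH = b^{-1}a^2 H = b^{-1}H$ because $a^2 \in H$. Hence $b^{-1}ab$ lies in the column $b^{-1}H$, i.e.\ the column of $b^{-1}$. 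For the row, compute $H b^{-1}ab = Hb^{-1}a\cdot b$; now $Hb^{-1}a = Hb^{-1}a$, and using $Hb = Ha$ (from $b \in aH$ one does \emph{not} immediately get $Hb=Ha$, so instead use $b^{-1}a \in H$ directly, which follows since $b^{-1}a = (a^{-1}b)^{-1} \in H$): then $Hb^{-1}a = H$, so $Hb^{-1}ab = Hb$, the row of $b$. This gives the first assertion.

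For the Nielsen-move claim, I would exhibit an explicit short sequence. Starting from $\{a,b\}$, the moves
$$\{a,b\} \mapsto \{a^{-1},b\} \mapsto \{a^{-1}, a^{-1}b\} \mapsto \{a^{-1}, b^{-1}a\cdot a^{-1}b\}$$
will not be quite right; cleaner is to build $b^{-1}ab$ by left-multiplying $a$ by $b^{-1}$ and right-multiplying by $b$, but $b^{-1}$ is not in the multiset. The trick is that $H$-exponent $2$ lets us replace $b^{-1}$ by $ab\cdot(\text{something in }H)$ is also awkward; the honest route is to note $b^{-1}ab = b^{-1}\cdot a \cdot b$ and that, since this whole lemma lives inside a two-element multiset, we are allowed the Nielsen moves $b \mapsto b^{-1}$, then $a \mapsto ab^{-1}$ hmm — the safest presentation is: perform $\{a,b\}\mapsto\{a,b^{-1}\}\mapsto\{ab^{-1},b^{-1}\}\mapsto\{ab^{-1},(ab^{-1})^{-1}\cdot\text{...}\}$. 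I would actually just write out the chain
$$\{a,b\}\ \mapsto\ \{b^{-1}a,b\}\ \mapsto\ \{b^{-1}ab,b\},$$
where the first move replaces $a$ by $b^{-1}a$ — but that is $b^{-1}\cdot a$, a left-multiplication of $a$ by $b^{-1}$, which is \emph{not} a single Nielsen move since $b^{-1}\notin\{a,b\}$; it is, however, the composite $a \mapsto b\cdot a$ then invert? No. The correct elementary decomposition uses that replacing $g$ by $g^{-1}$ and by $hg$/$gh$ are allowed, so $a \mapsto a^{-1} \mapsto b a^{-1} \mapsto (ba^{-1})^{-1} = a b^{-1}$, and then $ab^{-1}\mapsto ab^{-1}\cdot b = a$ is a loop; instead $ab^{-1} \mapsto b\cdot ab^{-1}$ is not it either. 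I expect the paper resolves this by first transforming $b\mapsto b^{-1}$, giving multiset $\{a,b^{-1}\}$, then $a \mapsto b^{-1}\cdot a$ (legal now), then $\mapsto b^{-1}a\cdot(b^{-1})^{-1} = b^{-1}ab$, and finally $b^{-1}\mapsto b$ to restore the second entry — so the full sequence is $\{a,b\}\mapsto\{a,b^{-1}\}\mapsto\{b^{-1}a,b^{-1}\}\mapsto\{b^{-1}ab,b^{-1}\}\mapsto\{b^{-1}ab,b\}$.

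The main obstacle is purely bookkeeping: making sure each individual step is a genuine Nielsen move (multiplying by an element \emph{currently} in the multiset, or inverting), rather than a shortcut that secretly uses an element not present — the coset computations themselves are a two-line consequence of $a^2\in H$ and $a^{-1}b\in H$. A secondary subtlety worth a sentence is the degenerate possibility $b \in aH \cap Ha$ (so $b$ is in the same box as $a$); one should check the statement and moves still make sense there, analogously to the degenerate L-spin discussed after \Cref{defn:L-spin}. I would close by remarking, as the text foreshadows, that this two-element "loop-shift" is the self-inverse-chessboard analogue of the L-spin, trading the third element $c$ for the relation $a^2\in H$.
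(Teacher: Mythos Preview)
Your argument is correct and matches the paper's proof. The coset computations are identical in spirit (the paper phrases them via ``$a^{-1}$ and $b$ lie in the same column'' etc., but this is exactly your use of $a^{-1}b\in H$ and $a^2\in H$), and your final Nielsen sequence
\[
\{a,b\}\mapsto\{a,b^{-1}\}\mapsto\{b^{-1}a,b^{-1}\}\mapsto\{b^{-1}ab,b^{-1}\}\mapsto\{b^{-1}ab,b\}
\]
is precisely the paper's two-step chain $\{a,b\}\mapsto\{b^{-1}a,b\}\mapsto\{b^{-1}ab,b\}$ with the first composite arrow unpacked. One bookkeeping nit: your third arrow right-multiplies $b^{-1}a$ by $b$ while only $b^{-1}$ is present in the multiset, so strictly it is not a single Nielsen move; swapping your last two arrows (invert $b^{-1}$ first, then right-multiply by $b$) fixes this cleanly and yields exactly the paper's sequence.
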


\begin{proof}
    The configuration is presented in \Cref{fig:inverse-L-spin-set-up}. Firstly, if $a$ is of exponent $2$, then $a^2\in H$ and so $a^2H = H$, which implies $aH=a^{-1}H$. Similarly for the right cosets: $Ha = Ha^{-1}$. Also, $a\notin H$, so the box of $a$ is definitely outside $H$.
    The configuration is presented in \Cref{fig:inverse-L-spin-set-up}. Looking at the element $b^{-1}ab$ we see that it belongs to $b^{-1}H$ since $a^{-1}$ and $b$ lie in the same column; it also belongs to $Hb$ as $a$ and $b$ lie in the same column. Then we can perform the Nielsen transformations:
    $$\{a,b\} \mapsto \{b^{-1} a,b\} \mapsto \{b^{-1}a^{-1}b,b\}$$
\end{proof}

\begin{figure}
	\begin{center}
		\renewcommand{\w}{2}
		\renewcommand{\h}{2}
		\begin{tikzpicture}[line cap=round,line join=round,x=1.5cm,y=1.5cm]
		\foreach \y in {0,1,...,\h}
		{
			\draw [line width = 1pt] (0,\y) -- (\w,\y);
		}
		\foreach \x in {0,1,...,\w}
		{
			\draw [line width = 1pt] (\x,0) -- (\x,\h);
		}
		\draw [fill = black] (0.33,1.5) circle (3.5pt);
		\draw [color = black] (0.33-0.16,1.5-0.2) node {$a$};
		\draw [] (0.66,1.5) circle (3.5pt);
		\draw [] (0.66+0.16,1.5-0.23) node {\underline{$a^{-1}$}};
		\draw [] (1.5,1.5) circle (3.5pt);
		\draw [] (1.5-0.16,1.5-0.23) node {\underline{$b^{-1}$}};
		\draw [fill = black] (0.5,0.5) circle (3.5pt);
		\draw [color = black] (0.5-0.16,0.5-0.16) node {$b$};
		\draw [] (1.5,0.5) circle (3.5pt);
		\draw [] (1.5,0.5-0.23) node {\underline{$b^{-1}a^{-1}b$}};

		\draw [->, line width = 2pt] (2.5,1) -- (3.5,1);

		\foreach \y in {0,1,...,\h}
		{
			\draw [line width = 1pt] (0+4,\y) -- (\w+4,\y);
		}
		\foreach \x in {0,1,...,\w}
		{
			\draw [line width = 1pt] (\x+4,0) -- (\x+4,\h);
		}
		\draw [] (0.33+4,1.5) circle (3.5pt);
		\draw [] (0.66+4,1.5) circle (3.5pt);
		\draw [] (1.5+4,1.5) circle (3.5pt);
		\draw [fill = black] (0.5+4,0.5) circle (3.5pt);
		\draw [fill = black] (1.5+4,0.5) circle (3.5pt);
		\end{tikzpicture}
	\end{center}
	\caption{A Nielsen transformation performed in a self-inverse chessboard.}
	\label{fig:inverse-L-spin-set-up}
\end{figure}
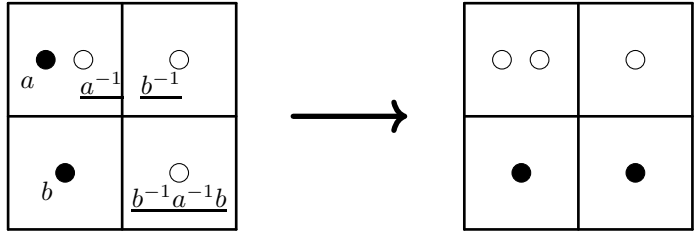

\subsection{Non self-inverse chessboards}
\label{sec:non-self-inverse-chessboards}

In this section we investigate subgroups which posses some non self-inverse chessboards. Recall \Cref{prop: left-clean} which says that a generating set of size equal to $[G:H]$ can be Nielsen transformed to a full left transversal.

\begin{defn}
    Let $S$ be a left transversal for $H$ (which is a set of elements in distinct columns). Given $g \in G$, we define a \emph{$g$-section of $S$} to be the set $S_g := HgH \cap S$.
\end{defn}

The reason we have defined $g$-sections is that if $HgH$ isn't a self-inverse chessboard (i.e. $HgH \neq Hg^{-1}H$), given a left transversal $S$ we can look at the set $S_{g^{-1}}^{-1}$,  interpreted as $(S_{g^{-1}})^{-1}$. Its elements are inverses of the elements of the $g^{-1}$-section $S_{g^{-1}}$ of $S$. Then, since $S_{g^{-1}}$ is left-diagonal (i.e. no two elements in the same left coset), the set $S_{g^{-1}}^{-1}$ is a right-diagonal set inside $HgH$.

The following definition will be useful in the proof of the next lemma, as well as in later sections.

\begin{defn}
    Given a configuration $S$ we define the \emph{graph of $S$} to be a graph whose vertices are elements of $S$ (with multiplicity). The edges are defined in the following way: for each row put an edge between every pair of elements lying in that row (call these \emph{horizontal edges}) and for each column put an edge between every pair of elements lying in that column (call them \emph{vertical edges}).
    We denote the graph by $\Sigma_S$.
    An example of how such a graph is created can be found in \Cref{fig:graph-example}.
\end{defn}

\begin{lem}
    \label{lem: solvable}
    Let $HgH \ne Hg^{-1}H$. Let $S$ be a multiset of elements in $HgH$ such that each row and each column contains exactly two elements of $S$. Then we can partition $S$ into two multisets $A$ and $B$ such that $A \cup B^{-1}$ is a left-right diagonal set with respect to $H \leq G$.
\end{lem}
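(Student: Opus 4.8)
The plan is to work with the graph $\Sigma_S$ of the configuration. Since $HgH$ is a chessboard (a complete bipartite structure on its rows and columns), and each row and each column of $S$ contains exactly two elements, the graph $\Sigma_S$ is $2$-regular: every vertex has exactly one horizontal neighbour and exactly one vertical neighbour (here I am using that no box contains two elements of $S$ — if it did, the two coincident elements would share both a row and a column, and we should first argue this case away or absorb it, noting that a doubled box forces a degenerate situation we can handle separately, or simply note the lemma's hypothesis implicitly rules it out since then some other row/column would be short). Hence $\Sigma_S$ is a disjoint union of cycles, and because horizontal and vertical edges alternate along any such cycle (you cannot follow a horizontal edge by a horizontal edge, as a vertex has only one of each), every cycle has even length.

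**The partition via $2$-colouring.** Each cycle of $\Sigma_S$, having even length, is properly $2$-colourable; colour its vertices alternately, and do this for every cycle. Let $A$ be the multiset of elements coloured one colour and $B$ the elements coloured the other. By construction, two elements sharing a row lie at the ends of a horizontal edge, hence receive opposite colours, so $A$ contains at most one element from each row and likewise $B$; the same holds for columns. Since $S$ has (say) $k$ rows and $k$ columns and $|S| = 2k$, and $A$ meets each of the $k$ rows at most once while $|A| = k$ (as each cycle contributes equally to $A$ and $B$), we get that $A$ is in fact a \emph{left-right diagonal} subset of $S$ on its own — exactly one element per row and per column. Likewise $B$ is left-right diagonal within $HgH$.

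**Inverting $B$ and checking diagonality of $A \cup B^{-1}$.** Now consider $A \cup B^{-1}$. The set $B$ is right-diagonal in $HgH$ (one element per right coset, i.e.\ per row), so under the inversion bijection between left and right cosets (\Cref{defn:inversion}), $B^{-1}$ is left-diagonal in $Hg^{-1}H$; similarly $B$ is left-diagonal, so $B^{-1}$ is right-diagonal in $Hg^{-1}H$. Thus $B^{-1}$ is itself left-right diagonal. Crucially, $A \subset HgH$ and $B^{-1} \subset Hg^{-1}H$, and these are \emph{different} chessboards by the hypothesis $HgH \ne Hg^{-1}H$; since left (resp.\ right) cosets in distinct double cosets are automatically distinct, no column (resp.\ row) is shared between an element of $A$ and an element of $B^{-1}$. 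Combining with the internal diagonality of $A$ and of $B^{-1}$, the union $A \cup B^{-1}$ is left-right diagonal with respect to $H \le G$, as required.

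**Expected main obstacle.** The genuine content is the structural claim that $\Sigma_S$ decomposes into even cycles and that an alternating $2$-colouring splits $S$ into two sets each hitting every row and column exactly once; the verification that $A\cup B^{-1}$ is diagonal is then essentially bookkeeping using that $HgH$ and $Hg^{-1}H$ are disjoint boards. The one subtlety to handle carefully is the possibility of two elements of $S$ occupying the same box (so that a "cycle" degenerates, or $\Sigma_S$ has a multi-edge of length $2$): a length-$2$ cycle is still even, and the $2$-colouring still works, so this is harmless, but it should be mentioned rather than silently assumed. I would state the cycle-decomposition as the key lemma-within-a-proof and keep the rest terse.
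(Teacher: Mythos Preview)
Your proof is correct and follows essentially the same approach as the paper: the graph $\Sigma_S$ is $2$-regular with alternating horizontal/vertical edges, hence a disjoint union of even cycles, and the bipartition of these cycles yields the desired $A$ and $B$. Your only deviation is the explicit discussion of two elements sharing a box, which the paper leaves implicit; as you correctly note, this case produces a length-$2$ cycle and is harmless.
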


\begin{figure}
	\begin{center}
		\renewcommand{\w}{4}
		\renewcommand{\h}{4}
		\begin{tikzpicture}[line cap=round,line join=round,x=1.5cm,y=1.5cm]
		\foreach \y in {0,1,...,\h}
		{
			\draw [line width = 1pt] (0,\y) -- (\w,\y);
		}
		\foreach \x in {0,1,...,\w}
		{
			\draw [line width = 1pt] (\x,0) -- (\x,\h);
		}
		\draw [fill = black] (0.5,3.5) circle (3.5pt);
		\draw [fill = black] (1.5,3.5) circle (3.5pt);
		\draw [fill = black] (1.5,2.5) circle (3.5pt);
		\draw [fill = black] (1.5,0.5) circle (3.5pt);
		\draw [fill = black] (2.5,2.5) circle (3.5pt);
		\draw [fill = black] (3.33,1.5) circle (3.5pt);
		\draw [fill = black] (3.66,1.5) circle (3.5pt);
		
		\draw [color = black] (0.5-0.16,3.5-0.16) node {$v_1$};
		\draw [color = black] (1.5-0.16,3.5-0.16) node {$v_2$};
		\draw [color = black] (1.5-0.16,2.5-0.16) node {$v_3$};
		\draw [color = black] (1.5-0.16,0.5-0.16) node {$v_4$};
		\draw [color = black] (2.5-0.16,2.5-0.16) node {$v_5$};
		\draw [color = black] (3.33-0.16,1.5-0.16) node {$v_6$};
		\draw [color = black] (3.66+0.16,1.5-0.16) node {$v_7$};

		\draw [line width=2pt, color=black] (0.5+6-0.5-1,3.5) -- (1.5+6-1,3.5);
		\draw [line width=2pt, dashdotted] (1.5+6-1,3.5) -- (1.5+6-0.5-1,2.5);
		\draw [line width=2pt, dashdotted] (1.5+6-1,3.5) -- (1.5+6-1,0.5);
		\draw [line width=2pt, dashdotted] (1.5+6-0.5-1,2.5) -- (1.5+6-1,0.5);
		\draw [line width=2pt, color=black] (1.5+6-0.5-1,2.5) -- (2.5+6-1,2.5);
		\draw [line width=2pt, color=black] (3.33+6-1,1.5) .. controls (3.33+6+0.2-1,1.5+0.2) and (3.66+6.5-0.2-1,1.5+0.2) .. (3.66+6.5-1,1.5);
		\draw [line width=2pt, dashdotted] (3.33+6-1,1.5) .. controls (3.33+6+0.2-1,1.5-0.2) and (3.66+6.5-0.2-1,1.5-0.2) .. (3.66+6.5-1,1.5);
		
		\draw [fill = black] (0.5+6-0.5-1,3.5) circle (3.5pt);
		\draw [color = black] (0.5-0.16+6-0.5-1,3.5-0.16) node {$v_1$};
		\draw [fill = black] (1.5+6-1,3.5) circle (3.5pt);
		\draw [color = black] (1.5+0.16+6-1,3.5-0.16) node {$v_2$};
		\draw [fill = black] (1.5+6-0.5-1,2.5) circle (3.5pt);
		\draw [color = black] (1.5-0.16+6-0.5-1,2.5-0.16) node {$v_3$};
		\draw [fill = black] (1.5+6-1,0.5) circle (3.5pt);
		\draw [color = black] (1.5-0.16+6-1,0.5-0.16) node {$v_4$};
		\draw [fill = black] (2.5+6-1,2.5) circle (3.5pt);
		\draw [color = black] (2.5+0.16+6-1,2.5-0.16) node {$v_5$};
		\draw [fill = black] (3.33+6-1,1.5) circle (3.5pt);
		\draw [color = black] (3.33-0.16+6-1,1.5-0.16) node {$v_6$};
		\draw [fill = black] (3.66+6.5-1,1.5) circle (3.5pt);
		\draw [color = black] (3.66+0.16+6.5-1,1.5-0.16) node {$v_7$};
		\end{tikzpicture}
	\end{center}
	\caption{An example of $\Sigma_{S}$. The vertical edges are dashed, the horizontal ones are solid.}
	\label{fig:graph-example}
\end{figure}
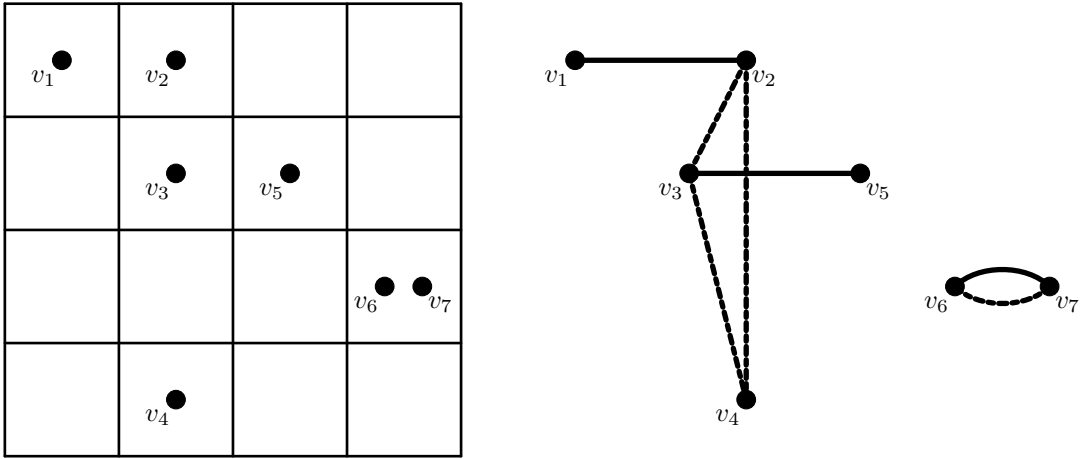

\begin{proof}
    Firstly, the graph $\Sigma_{S}$ is a union of disjoint cycles. This is because every column and every row of $HgH$ contains exactly two elements of $S$, so for every element of $S$ there exists precisely one element in the same column and precisely one element in the same row. Any graph in which every vertex is of degree $2$ is a union of disjoint cycles.
    
    Now, since each column and row contains only two elements, two consecutive edges can't be both horizontal or both vertical. Thus, all of the cycles of $\Sigma_{S}$ are of even length, so the graph is bipartite. Let's take $A$ and $B$ to be the sets of elements of $S$ corresponding to the bipartite components of $\Sigma_{S}$.
    
    In such a setting, in each of the rows and columns there is exactly one element of $A$ and one element of $B$, which means that both $A$ and $B$ are left-right diagonal sets. This in turn means that $A\cup B^{-1}$ is a left-right diagonal set, since $A \subset HgH$ and $B^{-1} \subset Hg^{-1}H$ while because of the assumption that $HgH \neq Hg^{-1}H$, we have $HgH \cap Hg^{-1}H = \emptyset$.
\end{proof}

This gives the following corollary.

\begin{cor}
    Suppose that $G$ has no self-inverse chessboards with respect to $H$ other than $H$ itself, and that $S$ is a left transversal such that for each $g \in G$ not in $H$, $S' = S_g \cup S_{g^{-1}}^{-1}$ satisfies the following condition: each row and column of $HgH$ contains exactly two elements of $S'$.
    Then, $S$ is Nielsen equivalent to a left-right transversal for $H$.
    \label{cor:trans-solv}
\end{cor}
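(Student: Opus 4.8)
The plan is to process the chessboards of $H$ in $G$ one inverse-pair at a time, build the desired left-right transversal out of the outputs of \Cref{lem: solvable}, and observe that passing from $S$ to this transversal only ever inverts single elements of $S$, so the Nielsen equivalence is essentially free. First I would set up the bookkeeping. Write $G$ as a disjoint union of double cosets $HgH$; the double coset $H$ is a $1\times 1$ chessboard containing the single element $S\cap H$, and by hypothesis every other chessboard $HgH$ satisfies $HgH\neq Hg^{-1}H$, so the inversion map of \Cref{defn:inversion} is a fixed-point-free involution on the set of non-trivial chessboards, matching them into pairs $\{HgH, Hg^{-1}H\}$. Since $S$ is a (complete) left transversal, $S$ is the disjoint union of $S\cap H$ together with the $g$-sections $S_g = S\cap HgH$, one per chessboard, and each $S_g$ meets every column of $HgH$ exactly once.

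For each pair I would pick a representative $g$ and form the multiset $S' = S_g \cup S_{g^{-1}}^{-1}$ inside $HgH$ (it lands in $HgH$ because $HgH = (Hg^{-1}H)^{-1}$, and $S_{g^{-1}}^{-1}$ is a right-diagonal set there). By hypothesis each row and column of $HgH$ contains exactly two elements of $S'$, so \Cref{lem: solvable} applies and yields a partition $S' = A_g \sqcup B_g$ with $A_g\cup B_g^{-1}$ left-right diagonal; moreover, as is shown in the course of proving that lemma, each row and each column of $HgH$ contains exactly one element of $A_g$ and exactly one element of $B_g$. Consequently $A_g$ is simultaneously a complete left transversal and a complete right transversal for the cosets contained in $HgH$, and $B_g^{-1}$ is the same for the cosets contained in $Hg^{-1}H$. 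I would then set $\tilde S := (S\cap H)\;\sqcup\;\bigsqcup_{\text{pairs}}\bigl(A_g\sqcup B_g^{-1}\bigr)$ and check, chessboard by chessboard, that $\tilde S$ meets each left coset of $H$ exactly once and each right coset exactly once; that is, $\tilde S$ is a left-right transversal.

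The remaining step --- showing $\tilde S$ is Nielsen equivalent to $S$ --- is the one that needs care. Fix a pair. As multisets, $A_g\sqcup B_g = S' = S_g\sqcup S_{g^{-1}}^{-1}$, and the evident bijection $\phi\colon S'\to S_g\sqcup S_{g^{-1}}$ that is the identity on $S_g$ and sends $t^{-1}\mapsto t$ on $S_{g^{-1}}^{-1}$ satisfies $\phi(x)\in\{x,x^{-1}\}$ for all $x$. Post-composing $\phi$ with inversion on the part indexed by $B_g$ gives a bijection $\psi\colon A_g\sqcup B_g^{-1}\to S_g\sqcup S_{g^{-1}}$ with $\psi(y)\in\{y,y^{-1}\}$ for all $y$. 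Thus the $\{HgH, Hg^{-1}H\}$-part of $\tilde S$ is obtained from the corresponding part of $S$ by replacing certain elements with their inverses; doing this over all pairs, and leaving $S\cap H$ untouched, realises $\tilde S$ as the result of a finite sequence of Nielsen moves of the first type applied to $S$. Hence $S$ is Nielsen equivalent to the left-right transversal $\tilde S$, as required.

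I expect the bookkeeping in this last step to be the only genuinely delicate point: one must keep straight whether a given element of the abstract partition $\{A_g,B_g\}$ of $S'\subseteq HgH$ comes from an element of $S$ that already lies in $HgH$ or from the inverse of an element of $S$ lying in $Hg^{-1}H$, and then confirm that after assembling $A_g\cup B_g^{-1}$ every element of $\tilde S$ is either an original member of $S$ or the inverse of one --- so that no multiplication Nielsen move is ever needed. The rest (the inverse-pairing of chessboards, the alternation of edge colours in $\Sigma_{S'}$ used inside \Cref{lem: solvable}, and the coset count making $\tilde S$ a genuine transversal) should be routine.
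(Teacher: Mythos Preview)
Your proposal is correct and follows essentially the same approach as the paper: pair up the non-self-inverse chessboards, push $S_{g^{-1}}$ into $HgH$ by inversion, apply \Cref{lem: solvable} to obtain the partition $A\sqcup B$, and then invert $B$ back. The paper's proof is terser and leaves the Nielsen-equivalence check implicit, whereas you spell out explicitly that only inversion moves are ever used; one small slip is that your map $\psi$ is obtained by \emph{pre}-composing $\phi$ with inversion on $B_g^{-1}$ rather than post-composing, but the intended construction is clear and correct.
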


\begin{proof}
    Let $S$ be as in the statement of the corollary. For each pair of inverse chessboards $HgH, Hg^{-1}H$, we can move all of the elements in these chessboards to just one of them using the inversion map, transforming $S_g\cup S_{g^{-1}}$ to $S_g \cup S_{g^{-1}}^{-1}$, which by the assumption is a configuration with two elements in each row and two elements in each column. By \Cref{lem: solvable} this can be partitioned into two multisets $A$ and $B$ such that $A\cup B^{-1}$ is diagonal with respect to $H$. After transforming $S_{g}\cup S_{g^{-1}} = A\cup B$ to $A\cup B^{-1}$, we obtain a diagonal set in the two chessboards $HgH$ and $Hg^{-1}H$. Doing this for each pair of inverse chessboards, we get a full left-right diagonal set (i.e. a left-right transversal).
\end{proof}

\subsubsection{Normal form of configurations}

\label{sec:normal-form}

In this section we will present a normal form of a configuration in a chessboard, meaning a representative of the equivalence class of configurations obtainable from a given one by performing L-spins. We start with a definition.

\begin{defn}
    Let $S$ be a configuration. The connected components of its graph $\Sigma_S$ are called \emph{connected components} (or just \emph{components}) of the configuration. We call a configuration \emph{connected} if it has only one connected component.
\end{defn}

It is clear that two elements of the configuration lying in the same row or column are in the same connected component, so the sets of rows and columns are partitioned as $R=\cup_{i=1}^k R_i$, $C=\cup_{i=1}^k C_i$ (where $R_i$ is a set of rows for each $i$, similarly $C_i$ is a set of columns) with:
\begin{itemize}
    \item The elements of $S$ lying in $R_i\times C_i$ form a connected component.
    \item There are no elements in $R_i\times C_j$ for $i\neq j$,
\end{itemize}
where by $R_i\times C_j$ we mean the union of boxes lying in the rows belonging to $R_i$ and at the same time lying in the columns belonging to $C_j$.

We can perform permutations on the rows and columns to get a situation where $R_i$'s and $C_j$'s consist of consecutive rows and columns and the top-left boxes of $R_i\times C_i$ are not empty. An example showing the connected components is in \Cref{fig:connected-components-example} there, the partition is as follows.
$$R_1 = \set{1,2}, \ R_2 = \set{3,4}, \ R_3 = \set{5}, \  C_1 = \set{1}, \  C_2 = \set{2,3,4}, \  C_3 = \set{5}$$

\begin{figure}[h!]
	\begin{center}
		\renewcommand{\w}{5}
		\renewcommand{\h}{5}
		\begin{tikzpicture}[line cap=round,line join=round,x=1.5cm,y=1.5cm]
		\foreach \y in {0,1,...,\h}
		{
			\draw [line width = 1pt] (0,\y) -- (\w,\y);
		}
		\foreach \x in {0,1,...,\w}
		{
			\draw [line width = 1pt] (\x,0) -- (\x,\h);
		}
		
		\draw [fill = black] (0.5,4.5) circle (3.5pt);
		\draw [fill = black] (0.33,3.5) circle (3.5pt);
		\draw [fill = black] (0.66,3.5) circle (3.5pt);
		\draw [fill = black] (1.5,2.5) circle (3.5pt);
		\draw [fill = black] (2.5,2.5) circle (3.5pt);
		\draw [fill = black] (3.5,2.5) circle (3.5pt);
		\draw [fill = black] (2.5,1.5) circle (3.5pt);
		\draw [fill = black] (4.5,0.5) circle (3.5pt);
		
		\draw [line width = 3pt] (0,5) -- (0,3) -- (1,3) -- (1,5) -- (0,5);
		\draw [line width = 3pt] (1,3) -- (1,1) -- (4,1) -- (4,3) -- (1,3);
		\draw [line width = 3pt] (4,1) -- (4,0) -- (5,0) -- (5,1) -- (4,1);
		\end{tikzpicture}
	\end{center}
    \caption{Example of a configuration's  connected components.}
    \label{fig:connected-components-example}
\end{figure}
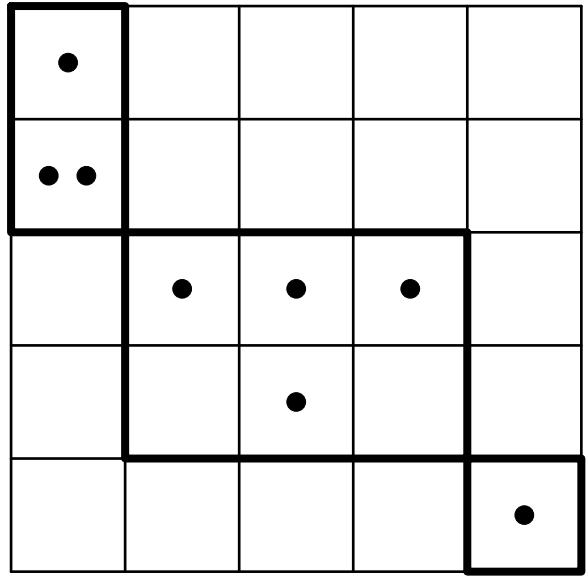

The following is an important observation.

\begin{prop}
    \label{prop:connectedness L-spins}
    L-spins don't change the connectedness of configurations in a chessboard.
\end{prop}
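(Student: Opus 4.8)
The plan is to show that performing an L-spin does not alter the graph $\Sigma_S$ up to the natural identification of vertices, so in particular the number of connected components stays the same. Recall the set-up of an L-spin: we have three elements $a,b,c$ of the configuration with $Ha = Hb$ (so $a,b$ share a row) and $bH = cH$ (so $b,c$ share a column), and we replace one of $a,b,c$ by $d := ab^{-1}c$, which by Lemma \ref{lem:above} lies in the row of $c$ and the column of $a$. The key structural fact I would use is the one established just before the proposition: an element lies in a given row (resp. column) of the configuration if and only if it lies in that row (resp. column) of the chessboard. Hence the rows and columns occupied by the four relevant boxes — the box of $a$, of $b$, of $c$, and of $d$ — are exactly the row of $a$ ($=$ row of $b$), the row of $c$ ($=$ row of $d$), the column of $b$ ($=$ column of $c$), and the column of $a$ ($=$ column of $d$). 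These are at most two distinct rows and two distinct columns, and the four boxes sit at the corners of the resulting $2\times 2$ sub-rectangle.

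The core observation is then purely combinatorial: replacing $b$ by $d$ (the "diagonal corner" of $b$) does not change which rows and columns are occupied by the multiset, nor the degree-structure of $\Sigma_S$ in a way that disconnects or reconnects anything. Concretely I would argue case by case on which of $a,b,c$ is substituted, but all three cases reduce to the same picture. Take the case where $b\mapsto d$: before the move, $b$ is adjacent in $\Sigma_S$ to everything in the row of $a$ and everything in the column of $c$; after the move, $d$ is adjacent to everything in the row of $c$ and everything in the column of $a$. In each of these rows/columns, the element $a$ (resp. $c$) already sits, and $a$ and $c$ remain in the configuration; so $d$ is connected to $a$ (they share the column of $a$) and $a$ is connected to everything in the row of $a$, while $d$ is connected to $c$ (they share the row of $c$) and $c$ is connected to everything in the column of $c$. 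Thus the component containing $\{a,c,d\}$ after the move contains exactly the same vertex set as the component containing $\{a,b,c\}$ before the move: no vertex is detached (every neighbour of the old $b$ was a neighbour of $a$ or of $c$, both of which survive), and no two previously separate components get merged (the move only ever involves rows and columns already occupied, by the row/column-preservation fact, so no new "occupied" row or column is created). The cases $a\mapsto d$ and $c\mapsto d$ are symmetric — $a$ and $c$ play the roles of $b$ in the above argument — and the degenerate L-spin (Figure \ref{fig:L-spin-degenerate}), where two of $a,b,c$ share a box, is the same argument with one of the two "rows" or "columns" collapsed; connectedness is only easier to maintain there.

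The one point requiring a little care — and what I expect to be the main obstacle to writing this cleanly — is bookkeeping the identification of vertices across the move: $\Sigma_S$ is a graph on the multiset $S$ "with multiplicity", and an L-spin deletes one vertex and adds another, so strictly we must exhibit a bijection between the vertex sets of the old and new graphs under which adjacency is preserved on the untouched vertices and the swapped vertex lands in the right component. I would make this precise by fixing the bijection that is the identity on $S\setminus\{\text{the substituted element}\}$ and sends the substituted element to $d$, and then checking (using that $d$ shares a row with one surviving element of the triple and a column with another) that $d$'s component equals the substituted element's old component. Since an L-spin is invertible (it is a Nielsen-type move and one can spin back), it suffices to show the number of components cannot strictly decrease; then applying the same to the inverse move gives equality. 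This reduces the whole proposition to the single claim: after an L-spin the new element $d$ lies in the same component as (at least) two of the three elements $a,b,c$ — which is immediate from $dH = aH$ and $Hd = Hc$.
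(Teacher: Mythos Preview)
Your proposal is correct and follows essentially the same idea as the paper's proof: the new element $d = ab^{-1}c$ shares a column with $a$ and a row with $c$, so it lies in the same component as the surviving members of the triple, and since $a,b,c$ were already mutually connected, nothing is detached. The paper's proof is much terser --- it only explicitly treats the degenerate case (where the set of non-empty boxes is literally unchanged) and the non-degenerate replacement of one element, observing that the image lands in the column of $a$ and hence the component still contains $a$ and $b$ --- whereas you spell out all three substitution cases, the no-merging direction, and the invertibility reduction; this extra care is not strictly necessary here but does no harm.
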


\begin{proof}
    We use the notation as in \Cref{fig:L-spin set-up}.
    In the case of a degenerate L-spin the result is clear since non-empty boxes before the L-spin are precisely those that are non-empty after the L-spin. In the case of a non-degenerate L-spin, note that the image of $c$ lies in the same column of $a$. Thus if our configuration were connected initially, the component containing the image of $c$ contains both $a$ and $b$, and so it remains connected.
\end{proof}

We can therefore focus our attention on the connected components. We want to transform a given configuration into a particularly ordered one.

Let's consider a single connected component $D$, and let $s$ be one of its elements. By permuting the columns and rows, we can assume that it is in the top left box. Then, if in the graph $\Sigma_D$ of $D$ there are any vertices of distance $2$ (in the graph $\Sigma_{D}$) from $s$, we can transform $D$ by an L-spin, decreasing the distance from $s$ to $1$, as shown in \Cref{fig:move-closer}.

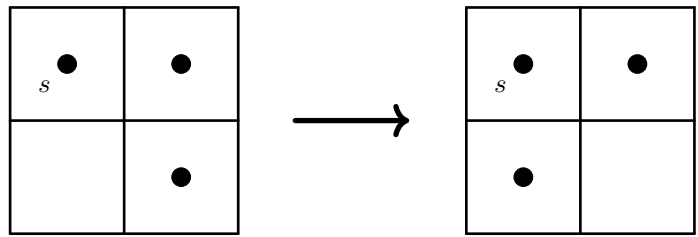
\begin{figure}[h!]
	\begin{center}
		\renewcommand{\w}{2}
		\renewcommand{\h}{2}
		\begin{tikzpicture}[line cap=round,line join=round,x=1.5cm,y=1.5cm]
		\foreach \y in {0,1,...,\h}
		{
			\draw [line width = 1pt] (0,\y) -- (\w,\y);
		}
		\foreach \x in {0,1,...,\w}
		{
			\draw [line width = 1pt] (\x,0) -- (\x,\h);
		}
		\draw [fill = black] (0.5,1.5) circle (3.5pt);
		\draw (0.5-0.2,1.5-0.2) node {$s$};
		\draw [fill = black] (1.5,1.5) circle (3.5pt);
		\draw [fill = black] (1.5,0.5) circle (3.5pt);
		
		\draw [->, line width = 2pt] (2.5,1) -- (3.5,1);

		\foreach \y in {0,1,...,\h}
		{
			\draw [line width = 1pt] (0+4,\y) -- (\w+4,\y);
		}
		\foreach \x in {0,1,...,\w}
		{
			\draw [line width = 1pt] (\x+4,0) -- (\x+4,\h);
		}
		\draw [fill = black] (0.5+4,1.5) circle (3.5pt);
		\draw (0.5-0.2+4,1.5-0.2) node {$s$};
		\draw [fill = black] (0.5+4,0.5) circle (3.5pt);
		\draw [fill = black] (1.5+4,1.5) circle (3.5pt);
		\end{tikzpicture}
	\end{center}
    \caption{Performing an L-spin to move an element closer to the top left corner.}
    \label{fig:move-closer}
\end{figure}
    
As the process doesn't increase the distance from $s$ of any elements, applying this repeatedly we can get to a point where all elements are of distance $1$ away from each other in $\Sigma_D$. Then, if there is a box, other than the top-left corner, such that there are at least two elements in it, we perform a degenerate L-spin to put the additional element in the top-left corner as shown in the \Cref{fig:normal-form-final-step}.

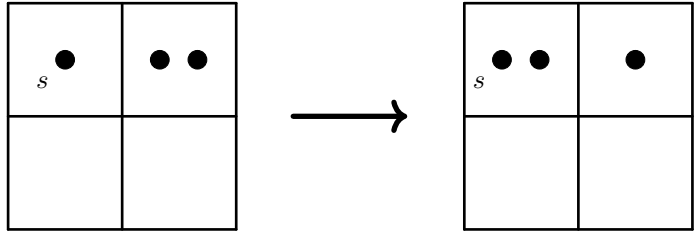
\begin{figure}[h!]
	\begin{center}
		\renewcommand{\w}{2}
		\renewcommand{\h}{2}
		\begin{tikzpicture}[line cap=round,line join=round,x=1.5cm,y=1.5cm]
		\foreach \y in {0,1,...,\h}
		{
			\draw [line width = 1pt] (0,\y) -- (\w,\y);
		}
		\foreach \x in {0,1,...,\w}
		{
			\draw [line width = 1pt] (\x,0) -- (\x,\h);
		}
		\draw [fill = black] (0.5,1.5) circle (3.5pt);
		\draw (0.5-0.2,1.5-0.2) node {$s$};
		\draw [fill = black] (1.33,1.5) circle (3.5pt);
		\draw [fill = black] (1.66,1.5) circle (3.5pt);
		
		\draw [->, line width = 2pt] (2.5,1) -- (3.5,1);

		\foreach \y in {0,1,...,\h}
		{
			\draw [line width = 1pt] (0+4,\y) -- (\w+4,\y);
		}
		\foreach \x in {0,1,...,\w}
		{
			\draw [line width = 1pt] (\x+4,0) -- (\x+4,\h);
		}
		\draw [fill = black] (0.33+4,1.5) circle (3.5pt);
		\draw (0.33-0.2+4,1.5-0.2) node {$s$};
		\draw [fill = black] (0.66+4,1.5) circle (3.5pt);
		\draw [fill = black] (1.5+4,1.5) circle (3.5pt);
		\end{tikzpicture}
	\end{center}
    \caption{Using a degenerate L-spin to move multiple elements into corner box.}
    \label{fig:normal-form-final-step}
\end{figure}

This leads us to define the normal form of a connected configuration.

\begin{defn}\label{defn:config-nf}
    A connected configuration $S$ is said to be in its \emph{normal form} if all the elements of $S$ are in the first row/column, with at most $1$ element in each such box, aside from potentially the top left corner box. We also enforce that the non-empty boxes all lie next to each other (i.e. permute rows/columns such that there are no gaps).
\end{defn}

An example of a normal form is given in \Cref{fig:normal-form-example}.

\begin{figure}[h!]
	\begin{center}
		\renewcommand{\w}{3}
		\renewcommand{\h}{3}
		\begin{tikzpicture}[line cap=round,line join=round,x=1.5cm,y=1.5cm]
		\foreach \y in {0,1,...,\h}
		{
			\draw [line width = 1pt] (0,\y) -- (\w,\y);
		}
		\foreach \x in {0,1,...,\w}
		{
			\draw [line width = 1pt] (\x,0) -- (\x,\h);
		}
		\draw [fill = black] (0.25,2.5) circle (3.5pt);
		\draw [fill = black] (0.5,2.5) circle (3.5pt);
		\draw [fill = black] (0.75,2.5) circle (3.5pt);
		\draw [fill = black] (1.5,2.5) circle (3.5pt);
		\draw [fill = black] (2.5,2.5) circle (3.5pt);
		\draw [fill = black] (0.5,1.5) circle (3.5pt);
		\end{tikzpicture}
	\end{center}
    \caption{An example of a normal form.}
    \label{fig:normal-form-example}
\end{figure}
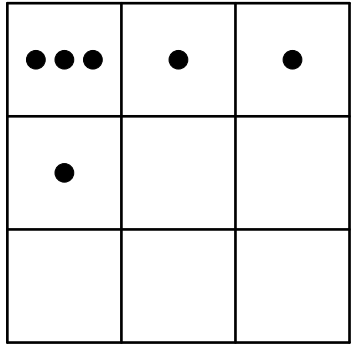

Finally, we introduce the following.

\begin{defn}
    Two configurations are said to be \emph{L-spin equivalent} if there exists a series of L-spins, and possibly permutation of rows and columns, transforming one configuration into the other.
\end{defn}

The next immediate result helps us in classifying the configuration.

\begin{prop}
    \label{prop:L-spin form determined}
    Let $C$ and $D$ be two L-spin equivalent configurations. Then they have the same:
    \begin{enumerate}
        \item number of elements,
        \item connected components,
        \item numbers of rows and columns.
    \end{enumerate}
\end{prop}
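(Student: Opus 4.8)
The plan is to reduce the statement to the effect of a single elementary move. By \Cref{defn:L-spin} and the definition of L-spin equivalence, any L-spin equivalence between $C$ and $D$ is a finite composition of moves each of which is either a permutation of the rows and columns of the table, or a single (possibly degenerate) L-spin. So it suffices to prove that each such elementary move preserves (1) the number of elements of the configuration, (2) the partition of the configuration into connected components (in particular their number), and (3) the number of rows and the number of columns that contain at least one element. For a permutation of rows and columns all three are immediate: such a move only relabels the boxes, leaving the multiset $R$, the graph $\Sigma_S$ up to the induced relabelling of its vertices, and the sets of non-empty rows and columns, unchanged.

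\textbf{Items (1) and (2) for a single L-spin.} Fix an L-spin, with the notation of \Cref{lem:above} and \Cref{fig:L-spin set-up}: elements $a,b,c$ of the configuration with $Ha=Hb$ and $bH=cH$, the move replacing one of $a,b,c$ by $d:=ab^{-1}c$. Item (1) is clear, since an L-spin removes one copy of an element of $R$ and inserts one copy of $d$, so $|R|$ is unchanged. For (2), observe that since $a,b$ share a row and $b,c$ share a column, the vertices $a,b,c$ of $\Sigma_S$ all lie in one connected component, say the one indexed by $(R_{i_0},C_{i_0})$ in the decomposition $R=\cup_i R_i$, $C=\cup_i C_i$. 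By \Cref{lem:above}, $d$ lies in the column of $a$ and the row of $c$, both belonging to $(R_{i_0},C_{i_0})$; hence $d\in R_{i_0}\times C_{i_0}$ and the occupancy of every box outside $R_{i_0}\times C_{i_0}$ is untouched. Thus every component other than $(R_{i_0},C_{i_0})$ is literally unchanged, and $(R_{i_0},C_{i_0})$ remains connected by \Cref{prop:connectedness L-spins}; being still contained in the same rows and columns, it neither absorbs nor sheds any rows or columns, and cannot merge with another component. So the partition into connected components is preserved. (For a degenerate L-spin this is even more transparent: as noted in the proof of \Cref{prop:connectedness L-spins}, the set of non-empty boxes is literally unchanged.)

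\textbf{Item (3) for a single L-spin.} An L-spin changes neither the set of non-empty rows nor the set of non-empty columns of the configuration. Indeed, the only box that can lose an element is the box of the substituted element; yet after the move the row of $c$ still contains $d$, the column of $c$ (which equals the column of $b$) still contains $b$, and the row and column of $a$ still contain $a$. Conversely the only box that can gain an element is $(\text{row of }c)\cap(\text{column of }a)$, whose row and column were already non-empty (they contained $c$ and $a$). Hence no row or column becomes empty and no empty one becomes non-empty, so the counts in (3) are preserved. (If instead one counts all rows and columns of the table, rather than only the non-empty ones, the statement is trivially true for these moves.)

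\textbf{Main obstacle.} The only genuinely delicate point is (2): one must not merely invoke \Cref{prop:connectedness L-spins} to keep a component connected, but also rule out two components merging and a component shedding a row or column. Both are handled by the single observation that the new element $d=ab^{-1}c$ remains in the rows and columns that $a,b,c$ already occupied; everything else, including the bookkeeping for degenerate L-spins, is routine.
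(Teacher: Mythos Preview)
Your proof is correct and follows essentially the same approach as the paper's: item (1) is immediate from the definition of an L-spin, item (2) is reduced to \Cref{prop:connectedness L-spins} together with the observation that an L-spin acts entirely within one component, and item (3) follows because the element inserted by an L-spin lies in rows and columns that were already occupied. Your treatment is somewhat more detailed than the paper's---in particular you make explicit why components cannot merge or shed rows/columns, whereas the paper simply cites \Cref{prop:connectedness L-spins} for (2)---but the underlying argument is the same.
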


\begin{proof}
    1) is immediate from the definition of an L-spin. 2) is a consequence of \Cref{prop:connectedness L-spins}. 3) comes from the fact that if a column contains an element taking part in an L-spin, after the L-spin either the element in the column isn't transformed or it is transformed to a new element, but lying in the same column. The same applies to rows.
\end{proof}

\begin{prop}
    Every connected configuration is L-spin equivalent to a unique normal form.
\end{prop}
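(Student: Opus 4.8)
The plan is to split the statement into existence and uniqueness; existence is essentially already done, and the real work is uniqueness. For existence I would simply invoke the reduction procedure described in the paragraphs preceding \Cref{defn:config-nf}: starting from a connected configuration, pick an element $s$, permute it into the top-left box, repeatedly perform L-spins to bring every element to distance $1$ from $s$ in the graph $\Sigma_S$, then use degenerate L-spins to collect the surplus elements into the corner box, and finally permute rows and columns to remove gaps. By \Cref{prop:connectedness L-spins} connectedness is preserved throughout, so the outcome is a connected configuration in normal form. Hence the only thing left is to show the normal form is unique, in the sense of being a well-defined function of the L-spin equivalence class.

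\textbf{Identifying the invariants.} First I would pin down what is preserved. By \Cref{prop:L-spin form determined}, two L-spin equivalent configurations share the number $n$ of elements, their connected components, and their numbers of rows and columns. I would also record the slightly finer observation already implicit in the proof of that proposition: an L-spin never empties an occupied row or column and never fills an empty one, since the element it produces lies in a row and a column that were already occupied, while the row and column of the element it replaces remain occupied. Thus the sets of occupied rows and occupied columns are L-spin invariants; write $p$ and $q$ for their sizes. One also gets $n \geq p+q-1$ for free: the bipartite graph on the $p$ occupied rows and $q$ occupied columns with one edge per element is connected exactly when $\Sigma_S$ is, and a connected graph on $p+q$ vertices has at least $p+q-1$ edges.

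\textbf{Structure of a connected normal form, and conclusion.} The heart of the argument is to show that a connected normal form is determined, up to a permutation of rows and columns, by the triple $(n,p,q)$. In a normal form all elements lie in the first row or the first column. If the configuration is connected then every occupied row other than the first has its only occupied box in the first column, and symmetrically for occupied columns, so all $p+q-1$ boxes of this ``L'' are non-empty. The normal-form constraint forces each of the $p+q-2$ non-corner boxes to contain exactly one element, hence the corner box contains the remaining $n-(p+q-2)$ elements. This description depends only on $n$, $p$, $q$ (plus the equivalence-invariant counts of empty rows and columns that are merely carried along), so any two connected normal forms with the same data agree after relabelling rows and columns — which is precisely a permutation, and permutations are allowed in L-spin equivalence. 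Combining this with the invariance established above: if $C$ and $C'$ are L-spin equivalent connected normal forms then they have equal $(n,p,q)$ and hence coincide up to permutation, giving uniqueness.

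\textbf{Main obstacle.} I expect the only genuine friction to be the bookkeeping around empty rows and columns, and making precise that ``unique'' means unique up to the row/column permutations already built into the equivalence relation. Once the invariants $p,q$ and the ``every box of the L is occupied'' fact are in hand, the structural description is forced and nothing else requires real ideas.
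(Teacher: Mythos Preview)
Your proposal is correct and follows essentially the same approach as the paper: existence via the reduction procedure already described, and uniqueness because a connected normal form is determined by the L-spin invariants (number of elements, number of rows, number of columns) from \Cref{prop:L-spin form determined}. You supply more justification than the paper does---in particular the explicit count showing the corner box holds $n-(p+q-2)$ elements and the observation that occupied rows/columns are themselves invariant---but the underlying argument is the same.
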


\begin{proof}
    We proved that there is some normal form that a configuration is L-spin equivalent to in the course of defining normal forms.
    On the other hand, a normal form is determined by the number of its columns, the number of its rows and the number of its elements, neither of which is changed by L-spins (by \Cref{prop:L-spin form determined}) or by permutation of rows or columns.
\end{proof}

\subsubsection{Solvable configurations}
\label{sec:solvable_non-self-inverse}

In light of \Cref{lem: solvable}, we make the following definition.

\begin{defn}
    A configuration is called \emph{solvable} if it is L-spin equivalent to a configuration with exactly two elements in each row and in each column.
\end{defn}

The term `solvable' is motivated by the observation in \Cref{lem: solvable} that if the left transversal $S$ has the property that for each $g \in G$ the multiset $S_g \cup S_{g^{-1}}^{-1}$ is in a solvable configuration, then $S$ can be Nielsen transformed to a left-right transversal.

The normal forms provide a useful criterion for determining when a configuration is solvable.

\begin{prop}
    \label{prop:solv}
    A connected configuration $S$ of $2n$ elements is solvable if and only if it has exactly $n$ rows and $n$ columns.
    In general, a configuration is solvable if and only if each of its connected components is solvable.
\end{prop}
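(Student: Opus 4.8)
The plan is to handle the two easy directions by tracking how L-spins affect the basic invariants of a configuration, and to handle the one substantive direction by reducing to the normal form of \Cref{defn:config-nf} and then exhibiting an explicit sequence of L-spins.

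For the forward implication of the connected statement: if $S$ is solvable it is L-spin equivalent to a configuration $S'$ with exactly two elements in every row and every column. Since L-spins preserve the number of elements, $|S'| = 2n$; counting the elements of $S'$ by rows and then by columns forces $S'$ to have exactly $n$ rows and exactly $n$ columns, and since L-spins also preserve the numbers of rows and of columns (\Cref{prop:L-spin form determined}), so does $S$. For the ``in general'' statement, note that an L-spin uses three elements $a,b,c$ with $a,b$ in a common row and $b,c$ in a common column, so all three lie in a single connected component; hence a sequence of L-spins on a configuration decomposes as independent sequences of L-spins on its connected components, and conversely such sequences can be recombined. Since each row and each column belongs to exactly one component, a disjoint union of configurations has exactly two elements in each row and column precisely when each summand does; therefore $S$ is solvable if and only if each of its connected components is, and the general statement follows from the connected one.

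It remains to show that a connected configuration $S$ with $2n$ elements, $n$ rows and $n$ columns is solvable. First I would apply the normal-form proposition to replace $S$ by an L-spin-equivalent configuration $N$ in normal form; as $N$ has the same invariants $(2n,n,n)$, it consists of two elements $x,y$ in the corner box $(1,1)$, one element $a_j$ in box $(1,j)$ for $j=2,\dots,n$, and one element $b_i$ in box $(i,1)$ for $i=2,\dots,n$. I then perform $2(n-1)$ explicit L-spins. In Phase A, for each $i=2,\dots,n$ I L-spin the L formed by $x$, $a_i$ and $b_i$, substituting $b_i$; by \Cref{lem:above} the new element $a_i x^{-1} b_i$ lands in the diagonal box $(i,i)$. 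After Phase A the first column contains only $x,y$, and every column $j\ge 2$ contains $a_j$ together with one diagonal element, so every column already has exactly two elements, but the first row is overloaded with the $n+1$ elements $x,y,a_2,\dots,a_n$. In Phase B, for $k=2,3,\dots,n$ in this order, I L-spin the L whose pivot is $a_k$ (still in box $(1,k)$), one leg being the diagonal element in box $(k,k)$ (the same column as $a_k$) and the other leg being a first-row element which I substitute: at step $k=2$ I move $y$, and at step $k\ge 3$ I move $a_{k-1}$. By \Cref{lem:above} the moved element lands in row $k$ but in its own old column, so no column count changes, each of rows $2,\dots,n$ gains exactly one element, and the first row is whittled down to $\{x,a_n\}$. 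The resulting configuration has exactly two elements in each row and column, so it, and hence $S$, is solvable.

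The main obstacle is verifying that Phase B can really be scheduled as claimed: that when step $k$ is performed the pivot $a_k$ has not yet been moved and the box $(k,k)$ is still occupied by its diagonal element, that the substituted first-row element is genuinely distinct from the pivot, and that after all $n-1$ steps the first row has been reduced to exactly two elements while no column has acquired a third. The clean way to do this is to write out the final configuration box-by-box and check the row and column totals directly, which is exactly what the ordering of Phases A and B above is designed to make routine. Everything else — preservation of connectedness and of the invariants under L-spins, the component decomposition, and the reversibility of L-spins — follows directly from \Cref{prop:connectedness L-spins} and \Cref{prop:L-spin form determined}.
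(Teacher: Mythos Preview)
Your proof is correct and follows the same overall strategy as the paper: the forward implication via the invariants of \Cref{prop:L-spin form determined}, the reduction to connected components, and then passage to the normal form followed by an explicit sequence of L-spins. The only genuine difference is the explicit construction in the ``if'' direction. The paper pushes successive anti-diagonals down-and-right: in its $i$th move it simultaneously shifts the elements in boxes $(1,i),(2,i-1),\dots,(i,1)$ to $(2,i+1),(3,i),\dots,(i+1,2)$, using the not-yet-moved elements in boxes $(1,i+1)$ and $(i+1,1)$ as anchors; this costs $\binom{n}{2}$ L-spins in total. Your two-phase scheme (first drop each $b_i$ onto the diagonal box $(i,i)$ using $x$ and $a_i$, then sweep the excess first-row elements down one at a time using the freshly created diagonal entries as pivots) is just as valid, and in fact uses only $2(n-1)$ L-spins. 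Your scheduling check in Phase~B is the right thing to worry about, and it goes through exactly as you describe: at step $k$ the pivot $a_k$ is still in $(1,k)$, the diagonal entry in $(k,k)$ is untouched, and the element you move ($y$ for $k=2$, $a_{k-1}$ for $k\ge 3$) is still in row~1 and distinct from the pivot.
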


From hereon we call connected solvable components \textit{square}.

\begin{proof}
    The `only if' direction is due to the fact that if the number of element in every row and in every column is $2$, then the total number of elements is equal to twice the number of rows and equal to twice the number of columns (which thus have to be equal).
    
    For the `if' direction, we will give an explicit transformation, constructed of the L-spins taking an element in top left corner to the bottom right corner, as shown in \Cref{fig:flip}.
    
    \begin{figure}[h!]
    	\begin{center}
    		\renewcommand{\w}{2}
    		\renewcommand{\h}{2}
    		\begin{tikzpicture}[line cap=round,line join=round,x=1.5cm,y=1.5cm]
    		\foreach \y in {0,1,...,\h}
    		{
    			\draw [line width = 1pt] (0,\y) -- (\w,\y);
    		}
    		\foreach \x in {0,1,...,\w}
    		{
    			\draw [line width = 1pt] (\x,0) -- (\x,\h);
    		}
    		\draw [fill = black] (0.5,1.5) circle (3.5pt);
    		\draw [fill = black] (1.5,1.5) circle (3.5pt);
    		\draw [fill = black] (0.5,0.5) circle (3.5pt);
    		
    		\draw [->, line width = 2pt] (2.5,1) -- (3.5,1);

    		\foreach \y in {0,1,...,\h}
    		{
    			\draw [line width = 1pt] (0+4,\y) -- (\w+4,\y);
    		}
    		\foreach \x in {0,1,...,\w}
    		{
    			\draw [line width = 1pt] (\x+4,0) -- (\x+4,\h);
    		}
    		\draw [fill = black] (1.5+4,0.5) circle (3.5pt);
    		\draw [fill = black] (0.5+4,0.5) circle (3.5pt);
    		\draw [fill = black] (1.5+4,1.5) circle (3.5pt);
    		\end{tikzpicture}
    	\end{center}
        \caption{The only type of L-spin used in the construction.}
        \label{fig:flip}
    \end{figure}
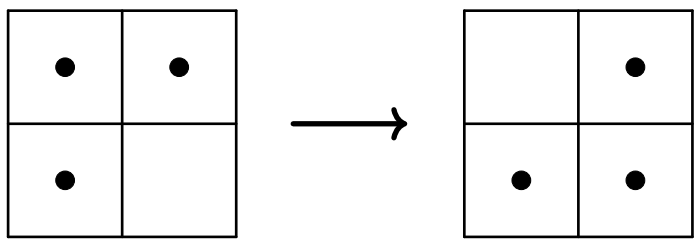
    
    For the descriptions of transformations we assume the convention that the top left corner is a box with coordinates $(1,1)$.
    
    In the first move we apply the described L-spin to move one element from box $(1,1)$ to box $(2,2)$ (we use the fact that the boxes $(1,2)$ and $(2,1)$ contain some elements).
    
    In the $i^{th}$ move (where $2\leq i \leq n-1$), we move the elements in boxes $(1,i), (2, i-1), \ldots, (i,1)$ to boxes $(2,i+1), (3, i+2), \ldots, (i+1,2)$. Because there are elements in boxes $(1,i+1),(2,i+2), \ldots, (i+1,1)$ we can perform these L-spins. This is shown pictorially in \Cref{fig:solv}.
    
    The configuration that we get in the end has exactly two elements in every row and in every column, which is what we needed. We provide an indicative illustration of this in \Cref{fig:solv}.
    
    \begin{figure}
    	\begin{center}
    		\renewcommand{\w}{4}
    		\renewcommand{\h}{4}
    		\begin{tikzpicture}[line cap=round,line join=round,x=1.25cm,y=1.25cm]
    		\foreach \y in {0,1,...,\h}
    		{
    			\draw [line width = 1pt] (0,\y) -- (\w,\y);
    		}
    		\foreach \x in {0,1,...,\w}
    		{
    			\draw [line width = 1pt] (\x,0) -- (\x,\h);
    		}
    		\draw [fill = black] (0.33,3.5) circle (3.5pt);
    		\draw [fill = black] (0.66,3.5) circle (3.5pt);
    		\draw [fill = black] (0.5,2.5) circle (3.5pt);
    		\draw [fill = black] (0.5,1.5) circle (3.5pt);
    		\draw [fill = black] (0.5,0.5) circle (3.5pt);
    		\draw [fill = black] (1.5,3.5) circle (3.5pt);
    		\draw [fill = black] (2.5,3.5) circle (3.5pt);
    		\draw [fill = black] (3.5,3.5) circle (3.5pt);
    		
    		\draw [dashed, ->, line width = 1.5pt, color = black] (0.33,3.5) .. controls (0.33,3.5-0.5) and (1.5-0.5,2.5) .. (1.5,2.5);
    		
    		\draw [->, line width = 2pt] (4.5,2) -- (5.5,2);
    		
    		\foreach \y in {0,1,...,\h}
    		{
    			\draw [line width = 1pt] (0+6,\y) -- (\w+6,\y);
    		}
    		\foreach \x in {0,1,...,\w}
    		{
    			\draw [line width = 1pt] (\x+6,0) -- (\x+6,\h);
    		}
    		\draw [fill = black] (0.5+6,3.5) circle (3.5pt);
    		\draw [fill = black] (1.5+6,2.5) circle (3.5pt);
    		\draw [fill = black] (0.5+6,2.5) circle (3.5pt);
    		\draw [fill = black] (0.5+6,1.5) circle (3.5pt);
    		\draw [fill = black] (0.5+6,0.5) circle (3.5pt);
    		\draw [fill = black] (1.5+6,3.5) circle (3.5pt);
    		\draw [fill = black] (2.5+6,3.5) circle (3.5pt);
    		\draw [fill = black] (3.5+6,3.5) circle (3.5pt);
    		
    		\draw [dashed, ->, line width = 1.5pt, color = black] (0.5+6,2.5) .. controls (0.5+6,2.5-0.5) and (1.5+6-0.5,1.5) .. (1.5+6,1.5);
    		\draw [dashed, ->, line width = 1.5pt, color = black] (1.5+6,3.5) .. controls (1.5+6,3.5-0.5) and (2.5+6-0.5,2.5) .. (2.5+6,2.5);
    		
    		\draw [->, line width = 2pt] (4.5+6,2) -- (5.5+6,2);
    		
    		\draw [->, line width = 2pt] (-1.5,2-5) -- (-0.5,2-5);
    		
    		\foreach \y in {0,1,...,\h}
    		{
    			\draw [line width = 1pt] (0,\y-5) -- (\w,\y-5);
    		}
    		\foreach \x in {0,1,...,\w}
    		{
    			\draw [line width = 1pt] (\x,0-5) -- (\x,\h-5);
    		}
    		\draw [fill = black] (0.5,3.5-5) circle (3.5pt);
    		\draw [fill = black] (1.5,1.5-5) circle (3.5pt);
    		\draw [fill = black] (2.5,2.5-5) circle (3.5pt);
    		\draw [fill = black] (0.5,1.5-5) circle (3.5pt);
    		\draw [fill = black] (0.5,0.5-5) circle (3.5pt);
    		\draw [fill = black] (1.5,2.5-5) circle (3.5pt);
    		\draw [fill = black] (2.5,3.5-5) circle (3.5pt);
    		\draw [fill = black] (3.5,3.5-5) circle (3.5pt);
    		
    		\draw [dashed, ->, line width = 1.5pt, color = black] (0.5,1.5-5) .. controls (0.5,1.5-5-0.5) and (1.5-0.5,0.5-5) .. (1.5,0.5-5);
    		\draw [dashed, ->, line width = 1.5pt, color = black] (1.5,2.5-5) .. controls (1.5,2.5-5-0.5) and (2.5-0.5,1.5-5) .. (2.5,1.5-5);
    		\draw [dashed, ->, line width = 1.5pt, color = black] (2.5,3.5-5) .. controls (2.5,3.5-5-0.5) and (3.5-0.5,2.5-5) .. (3.5,2.5-5);
    		
    		\draw [->, line width = 2pt] (4.5,2-5) -- (5.5,2-5);
    		
    		\foreach \y in {0,1,...,\h}
    		{
    			\draw [line width = 1pt] (0+6,\y-5) -- (\w+6,\y-5);
    		}
    		\foreach \x in {0,1,...,\w}
    		{
    			\draw [line width = 1pt] (\x+6,0-5) -- (\x+6,\h-5);
    		}
    		\draw [fill = black] (0.5+6,3.5-5) circle (3.5pt);
    		\draw [fill = black] (0.5+6,0.5-5) circle (3.5pt);
    		\draw [fill = black] (1.5+6,0.5-5) circle (3.5pt);
    		\draw [fill = black] (1.5+6,1.5-5) circle (3.5pt);
    		\draw [fill = black] (2.5+6,1.5-5) circle (3.5pt);
    		\draw [fill = black] (2.5+6,2.5-5) circle (3.5pt);
    		\draw [fill = black] (3.5+6,2.5-5) circle (3.5pt);
    		\draw [fill = black] (3.5+6,3.5-5) circle (3.5pt);
    		\end{tikzpicture}
    	\end{center}
	    \caption{A sequence of L-spins transforming the normal form to a solvable configuration}
	    \label{fig:solv}
	\end{figure}
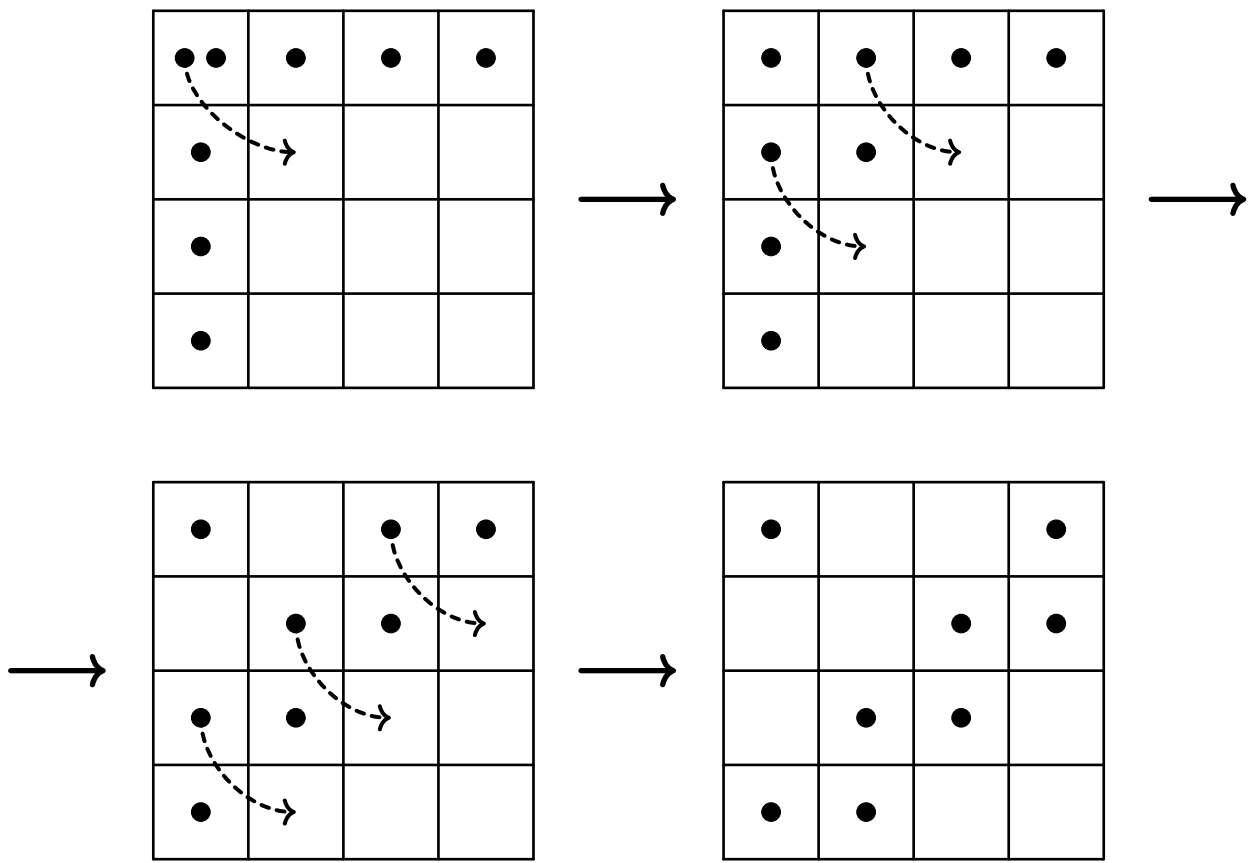
\end{proof}

\subsection{Self-inverse chessboards}
\label{sec:self-inverse-chessboards}

\begin{rem}\label{rem:self-inverse}
In the case of a self-inverse chessboard  $HgH=Hg^{-1}H$  we have a very different setting --- the inversion map won't enable us to move more elements into that chessboard. However, it gives us an additional operation in a given chessboard, apart from the L-spin. Note that without the inversion, when starting with a left transversal, we couldn't perform any L-spins --- this is since every column contained a single element.
\end{rem}

\subsubsection{Inverse-dual graphs}

We now give a convenient way of describing what happens with self-inverse chessboards. It turns out that again graphs are useful for this.

\begin{defn}
    Let $HgH = Hg^{-1}H$ and let $S$ be a configuration in this chessboard. According to \Cref{rem:self-inverse}, let the columns be $a_1H,a_2H,\ldots, a_nH$ and let the rows be $Ha_1^{-1},Ha_2^{-1}, \ldots, Ha_n^{-1}$. Then, we create a directed graph $\Theta_S$ as follows. Take $n$ vertices $v_1, \ldots, v_n$ and for each element $s$ of $S$, put a directed edge $v_iv_j$ if $s$ lies in the box of coordinates $(i,j)$ (that is, $s \in a_jH \cap Ha_i^{-1}$).
    
    We call this graph the \emph{inverse-dual graph} for a configuration $S$, reflecting the fact that, contrary to the previous graph of a configuration $\Sigma_S$, in $\Theta_S$ the elements of a configuration are represented with edges, not vertices (hence inverse-\textit{dual}); also, the graph is defined in the context of self-inverse chessboards (hence \textit{inverse}-dual).
\end{defn}

The construction for such a graph is shown in \Cref{fig:inverse-graph-example}. It is worth noting that the inverse-dual graph can contain both parallel edges (corresponding to elements lying in the same box) and loops (corresponding to elements lying on the diagonal).

\begin{figure}[h!]
	\begin{center}
		\renewcommand{\w}{4}
		\renewcommand{\h}{4}
		\begin{tikzpicture}[line cap=round,line join=round,x=1.5cm,y=1.5cm]
		\foreach \y in {0,1,...,\h}
		{
			\draw [line width = 1pt] (0,\y) -- (\w,\y);
		}
		\foreach \x in {0,1,...,\w}
		{
			\draw [line width = 1pt] (\x,0) -- (\x,\h);
		}
		\draw [fill = black] (0.5,3.5) circle (3.5pt);
		\draw [fill = black] (1.5,3.5) circle (3.5pt);
		\draw [fill = black] (1.5,2.5) circle (3.5pt);
		\draw [fill = black] (1.5,0.5) circle (3.5pt);
		\draw [fill = black] (2.5,2.5) circle (3.5pt);
		\draw [fill = black] (3.33,1.5) circle (3.5pt);
		\draw [fill = black] (3.66,1.5) circle (3.5pt);
		
		\draw [color = black] (0.5-0.16,3.5-0.16) node {$a$};
		\draw [color = black] (1.5-0.16,3.5-0.16) node {$b$};
		\draw [color = black] (1.5-0.16,2.5-0.16) node {$c$};
		\draw [color = black] (1.5-0.16,0.5-0.16) node {$d$};
		\draw [color = black] (2.5-0.16,2.5-0.16) node {$e$};
		\draw [color = black] (3.33-0.16,1.5-0.16) node {$f$};
		\draw [color = black] (3.66+0.16,1.5-0.16) node {$g$};

		\draw [fill = black] (1+5,1) circle (3.5pt);
		\draw [color = black] (1+5-0.2,1-0.2) node {$v_3$};
		\draw [fill = black] (3+5,1) circle (3.5pt);
		\draw [color = black] (3+5+0.2,1-0.2) node {$v_4$};
		\draw [fill = black] (1+5,3) circle (3.5pt);
		\draw [color = black] (1+5-0.2,3-0.2) node {$v_1$};
		\draw [fill = black] (3+5,3) circle (3.5pt);
		\draw [color = black] (3+5+0.3,3-0.1) node {$v_2$};
		
		\draw [latex-, line width = 2pt] (1+5,3) .. controls (1+5-0.5,3+0.5) and (1+5+0.5,3+0.5) .. (1+5,3);
		\draw [color = black] (1+5,3+0.5) node {$a$};
		\draw [-latex, line width = 2pt] (1+5,3) .. controls (1+5+0.5,3) and (3+5-0.5,3) .. (3+5,3);
		\draw [color = black] (2+5,3+0.2) node {$b$};
		\draw [-latex, line width = 2pt] (3+5,3) .. controls (3+5-0.5,3+0.5) and (3+5+0.5,3+0.5) .. (3+5,3);
		\draw [color = black] (3+5,3+0.5) node {$c$};
		\draw [-latex, line width = 2pt] (3+5,3) .. controls (3+5-1,3-0.5) and (1+5+0.5,1+1) .. (1+5,1);
		\draw [color = black] (2+5-0.05,2+0.05) node {$e$};
		\draw [-latex, line width = 2pt] (1+5,1) .. controls (1+5+0.5,1+0.5) and (3+5-0.3,1+0.3) .. (3+5,1);
		\draw [color = black] (2+5,1+0.5) node {$f$};
		\draw [-latex, line width = 2pt] (1+5,1) .. controls (1+5+0.5,1-0.5) and (3+5-0.3,1-0.3) .. (3+5,1);
		\draw [color = black] (2+5,1-0.5) node {$g$};
		\draw [-latex, line width = 2pt] (3+5,3) .. controls (3+5+0.5,3-0.5) and (3+5+0.3,1+0.3) .. (3+5,1);
		\draw [color = black] (3+5+0.1,2) node {$d$};
		\end{tikzpicture}
	\end{center}
    \caption{A configuration $S$ with associated inverse-dual graph $\Theta_S$.}
    \label{fig:inverse-graph-example}
\end{figure}
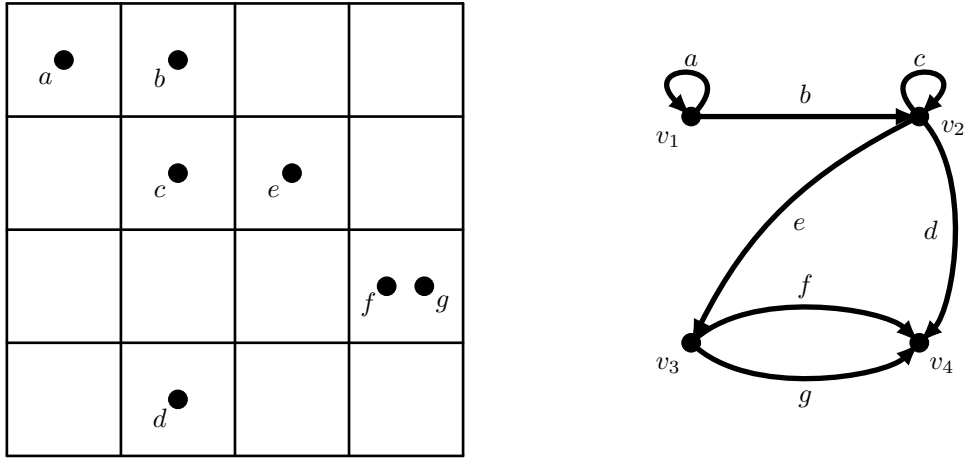

Now, we want to know what the inversions and L-spins correspond to in the context of inverse-dual graphs. The inversion map turns out to be simple.

\begin{prop}
    Let $S$ be a configuration in a self-inverse chessboard and $\Theta_S$ be its inverse-dual graph. Then, for $x\in S$, the Nielsen move $x \mapsto x^{-1}$ corresponds to changing the orientation of the edge corresponding to $x$.
\end{prop}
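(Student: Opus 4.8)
The statement is a one-line computation with coset memberships, so the plan is simply to track where $x^{-1}$ lands in the chessboard and read off the corresponding edge. Fix $x\in S$ lying in the box with coordinates $(i,j)$; by definition of the inverse-dual graph this means $x\in a_jH\cap Ha_i^{-1}$, and the edge of $\Theta_S$ contributed by $x$ is the directed edge $v_i\to v_j$.

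First I would record the two elementary inversion facts: for any $a\in G$ one has $x\in aH\iff x^{-1}\in Ha^{-1}$ and $x\in Ha\iff x^{-1}\in a^{-1}H$, both immediate from $(ah)^{-1}=h^{-1}a^{-1}$ together with $H=H^{-1}$. Applying the first to $x\in a_jH$ gives $x^{-1}\in Ha_j^{-1}$, and applying the second to $x\in Ha_i^{-1}$ gives $x^{-1}\in a_iH$. Hence
\[
x^{-1}\in a_iH\cap Ha_j^{-1}.
\]
Next I would note that $x^{-1}$ still lies in the same (self-inverse) chessboard, so that the claim even makes sense: since $x\in HgH$ and $HgH=Hg^{-1}H$, we get $x^{-1}\in(HgH)^{-1}=Hg^{-1}H=HgH$. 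Now, with the labelling convention from the definition of $\Theta_S$ (columns $a_1H,\dots,a_nH$, rows $Ha_1^{-1},\dots,Ha_n^{-1}$), the membership $x^{-1}\in a_iH\cap Ha_j^{-1}$ says precisely that $x^{-1}$ occupies the box with coordinates $(j,i)$. Therefore the edge contributed by $x^{-1}$ is $v_j\to v_i$, which is exactly $v_i\to v_j$ with its orientation reversed.

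Finally, since $x\mapsto x^{-1}$ is a Nielsen move of the first type, performing it on the configuration replaces $x$ by $x^{-1}$ and leaves every other element, hence every other edge of $\Theta_S$, unchanged; the only effect on $\Theta_S$ is to reverse the orientation of the edge corresponding to $x$, as claimed. There is essentially no obstacle in this argument; the only point that needs a little care is the bookkeeping with the row/column indexing convention (rows indexed by $Ha_i^{-1}$, columns by $a_iH$), which is exactly the convention that makes inversion act as edge-reversal rather than as a permutation of the vertices.
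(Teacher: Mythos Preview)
Your proof is correct and follows exactly the same approach as the paper's own proof: read off the box coordinates of $x$ as $x\in a_jH\cap Ha_i^{-1}$, invert to get $x^{-1}\in a_iH\cap Ha_j^{-1}$, and conclude that the edge $v_iv_j$ becomes $v_jv_i$. The paper's version is simply terser, omitting the explicit justification of the inversion facts and the remark that $x^{-1}$ remains in the self-inverse chessboard.
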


\begin{proof}
    Let $x$ be the edge $v_iv_j$, i.e. $x \in a_jH \cap Ha_i^{-1}$. Then $x^{-1} \in a_iH\cap Ha_j^{-1}$ and so it corresponds to the edge $v_jv_i$.
\end{proof}

Because of this, unless otherwise stated, from hereon  we will consider our inverse-dual graphs to be undirected, since changing direction of an edge is one of the Nielsen moves. Now we want to understand what the L-spins correspond to.

\begin{prop}
	\begin{enumerate}
		\item Let $a,b,c$ be elements of a configuration $S$ in a self-inverse chessboard $HgH$. Then: $a,b,c$ are in a configuration allowing an L-spin if and only if the three edges corresponding to these elements in the inverse-dual graph $\Theta_S$, when ordered appropriately satisfy: the first and the second go into the same vertex; the second and the third go out of the same vertex.
		\item If $a,b,c$ are indeed in a configuration allowing an L-spin, then an L-spin  on $\set{a,b,c}$ corresponds to exchanging one of the edges with the directed edge from the vertex at the beginning of the first edge to the vertex at the end of the third edge, as shown in \Cref{fig:L-spin.0}.
    \end{enumerate}
\end{prop}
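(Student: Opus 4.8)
The plan is purely to translate along the dictionary set up by the definition of $\Theta_S$: an element $s$ of the configuration lying in box $(i,j)$, i.e.\ $s\in a_jH\cap Ha_i^{-1}$, is recorded as the directed edge $v_i\to v_j$, so that the \emph{source} of an edge remembers its row (the right coset $Ha_i^{-1}$) and the \emph{target} remembers its column (the left coset $a_jH$). Hence ``two configuration elements lie in the same row'' translates to ``the two edges leave a common vertex'', and ``two configuration elements lie in the same column'' translates to ``the two edges enter a common vertex''. Everything then follows by reading \Cref{lem:above} through this dictionary.

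For the first claim, recall from \Cref{defn:L-spin} that $a,b,c$ admit an L-spin exactly when $Ha=Hb$ and $bH=cH$. The first equality says the edges of $a$ and $b$ leave a common vertex; the second says the edges of $b$ and $c$ enter a common vertex. Ordering the three edges as $(c,b,a)$ then realises precisely the stated pattern: the first two edges ($c$ and $b$) enter a common vertex, and the second and third edges ($b$ and $a$) leave a common vertex. Conversely, if the three edges of $\Theta_S$ corresponding to $a,b,c$ can be ordered $(e_1,e_2,e_3)$ so that $e_1,e_2$ enter a common vertex and $e_2,e_3$ leave a common vertex, then naming the elements carried by $e_1,e_2,e_3$ as $c,b,a$ respectively gives $cH=bH$ and $Hb=Ha$, which is exactly the hypothesis of \Cref{lem:above}.

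For the second claim, I only need to locate $ab^{-1}c$ in the chessboard, and this \Cref{lem:above} already does: $ab^{-1}c$ lies in the column of $a$ and the row of $c$. The column of $a$ is the target of the third edge $e_3=a$, and the row of $c$ is the source of the first edge $e_1=c$; thus $ab^{-1}c$ is carried by the directed edge running from the source of $e_1$ (the beginning of the first edge) to the target of $e_3$ (the end of the third edge), as drawn in \Cref{fig:L-spin.0}. Since an L-spin on $\set{a,b,c}$ replaces one of $a,b,c$ by $ab^{-1}c$, on the graph side it replaces one of $e_1,e_2,e_3$ by this new edge, as claimed.

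There is no real obstacle here; the only points needing care are bookkeeping ones. First, the orientations in $\Theta_S$ are fixed (they record whether $x$ or $x^{-1}$ appears in $S$), so one must keep the word ``appropriately'' honest by observing that it is the order $(c,b,a)$---not an arbitrary one---for which the pattern holds. Second, a degenerate L-spin, in which some of $a,b,c$ share a box (so that $e_1,e_2,e_3$ include parallel edges or loops), is covered by the identical computation with no change, since \Cref{lem:above} already handles it. So the content of the proposition is entirely this translation, and the work amounts to being precise about the source/target convention so that ``go into'' versus ``go out of'' lands on the correct side.
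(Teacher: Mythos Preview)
Your proof is correct and follows essentially the same approach as the paper's: both arguments are a direct translation of the L-spin hypothesis $Ha=Hb$, $bH=cH$ through the source/target dictionary of $\Theta_S$, then read off where $ab^{-1}c$ lands via \Cref{lem:above}. Your version is in fact a bit more careful than the paper's in making the ordering $(e_1,e_2,e_3)=(c,b,a)$ explicit, so that ``go into'' and ``go out of'' land on the correct pairs.
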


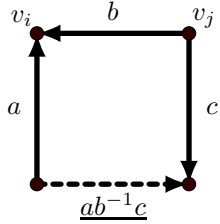
\begin{figure}[h!]
    \begin{center}
    	\definecolor{ttqqqq}{rgb}{0.2,0,0}
    	\begin{tikzpicture}[line cap=round,line join=round,x=1cm,y=1cm]
    	\draw [-latex, line width=2pt] (0,-2)-- (0,0);
    	\draw [latex-, line width=2pt] (0,0)-- (2,0);
    	\draw [-latex, line width=2pt] (2,0)-- (2,-2);
    	\draw [-latex, line width=2pt, dashed, color=black] (0,-2)-- (2,-2);
    	\draw [fill=ttqqqq] (0,-2) circle (2.5pt);
    	\draw [fill=ttqqqq] (0,0) circle (2.5pt);
    	\draw [fill=ttqqqq] (2,0) circle (2.5pt);
    	\draw [fill=ttqqqq] (2,-2) circle (2.5pt);
    	\draw [color=black] (-0.3,-1) node {$a$};
    	\draw [color=black] (1,0.3) node {$b$};
    	\draw [color=black] (2.3,-1) node {$c$};
    	\draw [color=black] (1,-2.3) node {\underline{$ab^{-1}c$}};
    	\draw [color=black] (-0.2,0.2) node {$v_i$};
    	\draw [color=black] (2.2,0.2) node {$v_j$};
    	\end{tikzpicture}
    \end{center}
    \caption{A situation in which an L-spin can be performed.}
    \label{fig:L-spin.0}
\end{figure}

\begin{proof}
    The elements $a,b,c$ are in a configuration allowing an L-spin if (in the context of a chessboard) two of them lie in the same column and two of them lie in the same row. Without loss of generality, let's suppose that $b, c$ lie in the same column $a_iH$ and $a, b$ in the same row $Ha_j^{-1}$ (like the situation in \Cref{fig:L-spin set-up}). This gives a situation shown in \Cref{fig:L-spin.0} -- the edges $a$ and $b$ both go into the vertex $v_i$, while the edges $b$ and $c$ both go out of the vertex $v_j$. This proves the first part of the proposition.
    
    For the second part, let's note that the new element created in the L-spin is $ab^{-1}c$ lying in the same row as $c$ and the same column as $a$. These correspond to vertices at the beginning and the end of the first and third edge, respectively. Thus, the element created in the L-spin corresponds to an edge from the beginning of the first edge to the end of the third edge (drawn dashed in \Cref{fig:L-spin.0}).
\end{proof}

 \Cref{fig:L-spin.1,fig:L-spin.2,fig:L-spin.3,fig:L-spin.4} on page~\pageref{fig:L-spin.1} show what the L-spins correspond to in the inverse-dual graph in situations differing by whether or not $\Theta_S$ involves parallel edges (i.e. two elements in the same box) and whether or not $\Theta_S$ involves loops (elements on the diagonal). The Nielsen moves are exactly the same as described above, the difference is purely visual.

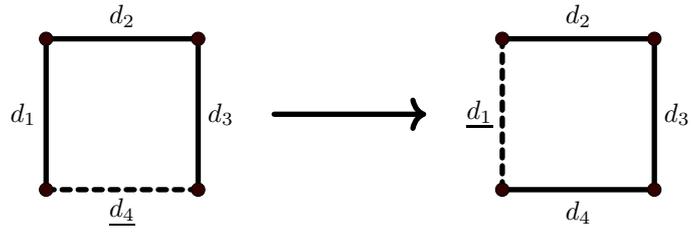
\begin{figure}[p]
    \begin{center}
    	\definecolor{ttqqqq}{rgb}{0.2,0,0}
    	\begin{tikzpicture}[line cap=round,line join=round,x=1cm,y=1cm]
    	\draw [line width=2pt] (0,-2)-- (0,0);
    	\draw [line width=2pt] (0,0)-- (2,0);
    	\draw [line width=2pt] (2,0)-- (2,-2);
    	\draw [line width=2pt, dashed, color=black] (0,-2)-- (2,-2);
    	\draw [fill=ttqqqq] (0,-2) circle (2.5pt);
    	\draw [fill=ttqqqq] (0,0) circle (2.5pt);
    	\draw [fill=ttqqqq] (2,0) circle (2.5pt);
    	\draw [fill=ttqqqq] (2,-2) circle (2.5pt);
    	\draw [color=black] (-0.3,-1) node {$d_{1}$};
    	\draw [color=black] (1,0.3) node {$d_{2}$};
    	\draw [color=black] (2.3,-1) node {$d_{3}$};
    	\draw [color=black] (1,-2.3) node {\underline{$d_{4}$}};
    	
    	\draw [<-, line width=2pt] (5,-1)-- (3,-1);
    	
    	\draw [line width=2pt, dashed, color=black] (6,-2)-- (6,0);
    	\draw [line width=2pt] (6,0)-- (8,0);
    	\draw [line width=2pt] (8,0)-- (8,-2);
    	\draw [line width=2pt] (6,-2)-- (8,-2);
    	\draw [fill=ttqqqq] (6,-2) circle (2.5pt);
    	\draw [fill=ttqqqq] (6,0) circle (2.5pt);
    	\draw [fill=ttqqqq] (8,0) circle (2.5pt);
    	\draw [fill=ttqqqq] (8,-2) circle (2.5pt);
    	\draw [color=black] (5.7,-1) node {\underline{$d_{1}$}};
    	\draw [color=black] (7,0.3) node {$d_{2}$};
    	\draw [color=black] (8.3,-1) node {$d_{3}$};
    	\draw [color=black] (7,-2.3) node {$d_{4}$};
    	\end{tikzpicture}
    \end{center}
    \caption{L-spin in $\Theta_S$.}
    \label{fig:L-spin.1}
\end{figure}

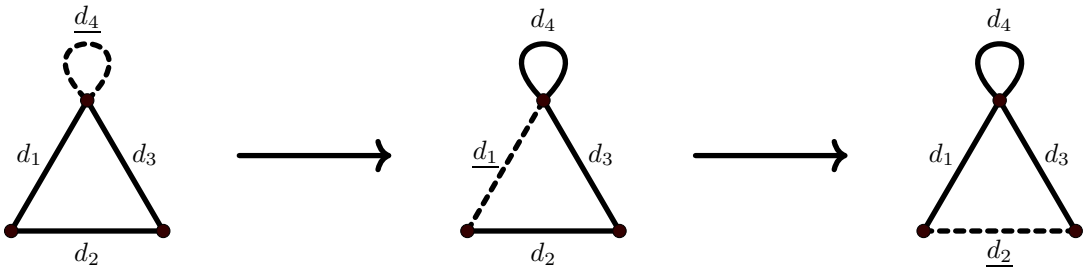
\begin{figure}[p]
	\begin{center}
		\definecolor{ttqqqq}{rgb}{0.2,0,0}
		\begin{tikzpicture}[line cap=round,line join=round,x=1cm,y=1cm]
		\draw [line width=2pt] (0,0)-- (2,0);
		\draw [line width=2pt] (2,0)-- (1,1.732);
		\draw [line width=2pt] (1,1.732)-- (0,0);
		\draw [line width=2pt, dashed, color=black](1,1.732) .. controls (1+1,1.732+1) and (1-1,1.732+1) .. (1,1.732);
		\draw [fill=ttqqqq] (1,1.732) circle (2.5pt);
		\draw [fill=ttqqqq] (0,0) circle (2.5pt);
		\draw [fill=ttqqqq] (2,0) circle (2.5pt);
		\draw [color=black] (0.5-0.3*1.732/2,1.732/2+0.3/2) node {$d_{1}$};
		\draw [color=black] (1,-0.3) node {$d_{2}$};
		\draw [color=black] (1.5+0.3*1.732/2,1.732/2+0.3/2) node {$d_{3}$};
		\draw [color=black] (1,2.8) node {\underline{$d_{4}$}};
		
		\draw [<-, line width=2pt] (5,1)-- (3,1);
		
		\draw [line width=2pt] (0+6,0)-- (2+6,0);
		\draw [line width=2pt] (2+6,0)-- (1+6,1.732);
		\draw [line width=2pt, dashed, color=black] (1+6,1.732)-- (0+6,0);
		\draw [line width=2pt, color=black](1+6,1.732) .. controls (1+1+6,1.732+1) and (1-1+6,1.732+1) .. (1+6,1.732);
		\draw [fill=ttqqqq] (1+6,1.732) circle (2.5pt);
		\draw [fill=ttqqqq] (0+6,0) circle (2.5pt);
		\draw [fill=ttqqqq] (2+6,0) circle (2.5pt);
		\draw [color=black] (0.5-0.3*1.732/2+6,1.732/2+0.3/2) node {\underline{$d_{1}$}};
		\draw [color=black] (1+6,-0.3) node {$d_{2}$};
		\draw [color=black] (1.5+0.3*1.732/2+6,1.732/2+0.3/2) node {$d_{3}$};
		\draw [color=black] (1+6,2.8) node {$d_{4}$};
		
		\draw [<-, line width=2pt] (5+6,1)-- (3+6,1);        
		
		\draw [line width=2pt, dashed, color=black] (0+12,0)-- (2+12,0);
		\draw [line width=2pt] (2+12,0)-- (1+12,1.732);
		\draw [line width=2pt] (1+12,1.732)-- (0+12,0);
		\draw [line width=2pt, color=black](1+12,1.732) .. controls (1+1+12,1.732+1) and (1-1+12,1.732+1) .. (1+12,1.732);
		\draw [fill=ttqqqq] (1+12,1.732) circle (2.5pt);
		\draw [fill=ttqqqq] (0+12,0) circle (2.5pt);
		\draw [fill=ttqqqq] (2+12,0) circle (2.5pt);
		\draw [color=black] (0.5-0.3*1.732/2+12,1.732/2+0.3/2) node {$d_{1}$};
		\draw [color=black] (1+12,-0.3) node {\underline{$d_{2}$}};
		\draw [color=black] (1.5+0.3*1.732/2+12,1.732/2+0.3/2) node {$d_{3}$};
		\draw [color=black] (1+12,2.8) node {$d_{4}$};
		\end{tikzpicture}
	\end{center}
    \caption{L-spin in $\Theta_S$ involving a loop.}
    \label{fig:L-spin.2}
\end{figure}

\begin{figure}[p]
	\begin{center}
		\definecolor{ttqqqq}{rgb}{0.2,0,0}
		\begin{tikzpicture}[line cap=round,line join=round,x=1cm,y=1cm]
		\draw [line width=2pt, color=black](0,0) .. controls (0+0.5,0+0.5) and (2-0.5,0+0.5) .. (2,0);
		\draw [line width=2pt, color=black](0,0) .. controls (0+0.5,0-0.5) and (2-0.5,0-0.5) .. (2,0);
		\draw [line width=2pt, color=black](2,0) .. controls (2+0.5,0+0.5) and (4-0.5,0+0.5) .. (4,0);
		\draw [line width=2pt, dashed, color=black](2,0) .. controls (2+0.5,0-0.5) and (4-0.5,0-0.5) .. (4,0);
		\draw [fill=ttqqqq] (0,0) circle (2.5pt);
		\draw [fill=ttqqqq] (2,0) circle (2.5pt);
		\draw [fill=ttqqqq] (4,0) circle (2.5pt);
		\draw [color=black] (1,0.6) node {$d_1$};
		\draw [color=black] (1,-0.6) node {$d_2$};
		\draw [color=black] (3,0.6) node {$d_3$};
		\draw [color=black] (3,-0.6) node {\underline{$d_4$}};
		
		\draw [<-, line width=2pt] (7,0)-- (5,0);
		
		\draw [line width=2pt, dashed, color=black](0+8,0) .. controls (0+0.5+8,0+0.5) and (2-0.5+8,0+0.5) .. (2+8,0);
		\draw [line width=2pt, color=black](0+8,0) .. controls (0+0.5+8,0-0.5) and (2-0.5+8,0-0.5) .. (2+8,0);
		\draw [line width=2pt, color=black](2+8,0) .. controls (2+0.5+8,0+0.5) and (4-0.5+8,0+0.5) .. (4+8,0);
		\draw [line width=2pt, color=black](2+8,0) .. controls (2+0.5+8,0-0.5) and (4-0.5+8,0-0.5) .. (4+8,0);
		\draw [fill=ttqqqq] (0+8,0) circle (2.5pt);
		\draw [fill=ttqqqq] (2+8,0) circle (2.5pt);
		\draw [fill=ttqqqq] (4+8,0) circle (2.5pt);
		\draw [color=black] (1+8,0.6) node {\underline{$d_1$}};
		\draw [color=black] (1+8,-0.6) node {$d_2$};
		\draw [color=black] (3+8,0.6) node {$d_3$};
		\draw [color=black] (3+8,-0.6) node {$d_4$};
		\end{tikzpicture}
	\end{center}
    \caption{L-spin in $\Theta_S$ involving parallel edges.}
    \label{fig:L-spin.3}
\end{figure}

\begin{figure}[p]
    \begin{center}
    	\definecolor{ttqqqq}{rgb}{0.2,0,0}
    	\begin{tikzpicture}[line cap=round,line join=round,x=1cm,y=1cm]
    	\draw [line width=2pt, color=black](0,0) .. controls (0+0.5,0+0.5) and (2-0.5,0+0.5) .. (2,0);
    	\draw [line width=2pt, color=black](0,0) .. controls (0+0.5,0-0.5) and (2-0.5,0-0.5) .. (2,0);
    	\draw [line width=2pt, color=black](2,0) .. controls (2+0.5,0+0.5) and (2+0.5,0-0.5) .. (2,0);
    	\draw [line width=2pt, dashed, color=black](0,0) .. controls (-0.5,-0.5) and (-0.5,+0.5) .. (0,0);
    	\draw [fill=ttqqqq] (0,0) circle (2.5pt);
    	\draw [fill=ttqqqq] (2,0) circle (2.5pt);
    	\draw [color=black] (1,0.6) node {$d_1$};
    	\draw [color=black] (1,-0.6) node {$d_2$};
    	\draw [color=black] (2.7,0) node {$d_3$};
    	\draw [color=black] (-0.7,0) node {\underline{$d_4$}};
    	
    	\draw [<-, line width=2pt] (6,0)-- (4,0);
    	
    	\draw [line width=2pt, dashed, color=black](0+8,0) .. controls (0+0.5+8,0+0.5) and (2-0.5+8,0+0.5) .. (2+8,0);
    	\draw [line width=2pt, color=black](0+8,0) .. controls (0+0.5+8,0-0.5) and (2-0.5+8,0-0.5) .. (2+8,0);
    	\draw [line width=2pt, color=black](2+8,0) .. controls (2+0.5+8,0+0.5) and (2+0.5+8,0-0.5) .. (2+8,0);
    	\draw [line width=2pt, color=black](0+8,0) .. controls (-0.5+8,-0.5) and (-0.5+8,+0.5) .. (0+8,0);
    	\draw [fill=ttqqqq] (0+8,0) circle (2.5pt);
    	\draw [fill=ttqqqq] (2+8,0) circle (2.5pt);
    	\draw [color=black] (1+8,0.6) node {\underline{$d_1$}};
    	\draw [color=black] (1+8,-0.6) node {$d_2$};
    	\draw [color=black] (2.7+8,0) node {$d_3$};
    	\draw [color=black] (-0.7+8,0) node {$d_4$};
    	\end{tikzpicture}
    \end{center}
    \caption{L-spin in $\Theta_S$ involving a loop and parallel edges.}
    \label{fig:L-spin.4}
\end{figure}
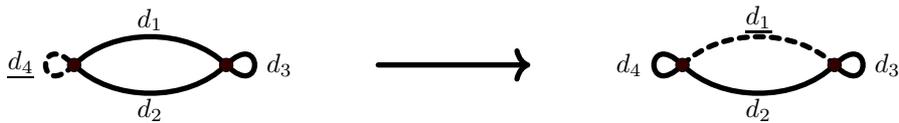

We now give an  operation similar to an L-spin, which can be performed with only two elements -- it was alluded to in \Cref{lem:loop-shift} in the context of chessboards.

\begin{prop}
    Suppose $HgH$ represents a self-inverse chessboard and $a \in S$ lies on its diagonal (so $a$ corresponds to a loop in $\Theta_S$). Let $b \in S$ lie in the column of $a$. Then the transformation $a \mapsto ab \mapsto b^{-1}ab$ corresponds to moving the loop $a$ from one vertex of the edge $b$ to the other.
    \label{prop:loop-shift}
\end{prop}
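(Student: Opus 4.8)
The plan is to deduce this directly from \Cref{lem:loop-shift}, after unwinding the coordinate conventions of the inverse-dual graph. First I would fix notation: write the columns of the self-inverse chessboard $HgH$ as $a_1H,\dots,a_nH$ and the rows as $Ha_1^{-1},\dots,Ha_n^{-1}$, as in the definition of $\Theta_S$, so that an element in box $(i,j)$ is the edge $v_iv_j$. Saying $a$ is a loop means $a$ lies in some box $(k,k)$, i.e. $a\in a_kH\cap Ha_k^{-1}$; then $a$ and $a^{-1}$ lie in the same right coset, so $a^2\in H$, and since $a\notin H$ this recovers that $a$ has $H$-exponent $2$. Saying $b$ lies in the column of $a$ means $b\in a_kH=aH$; write the box of $b$ as $(l,k)$, so that $b$ is the edge $v_lv_k$.

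Next I would invoke \Cref{lem:loop-shift} with this $a$ and $b$: its hypotheses ($a$ of $H$-exponent $2$ and $b\in aH$) hold, so the composite $a\mapsto ab\mapsto b^{-1}ab$ is realised by a sequence of Nielsen moves leaving $b$ unchanged, and $b^{-1}ab$ lies in the row of $b$ and the column of $b^{-1}$. Now $b\in a_kH\cap Ha_l^{-1}$ gives $b^{-1}\in a_lH\cap Ha_k^{-1}$, so the column of $b^{-1}$ is $a_lH$ and the row of $b$ is $Ha_l^{-1}$; hence $b^{-1}ab\in a_lH\cap Ha_l^{-1}$, i.e. it sits in box $(l,l)$ and is therefore a loop at $v_l$. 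The discrepancy between $b^{-1}ab$ here and the element $b^{-1}a^{-1}b$ that appears in the proof of \Cref{lem:loop-shift} is immaterial, since $a^2\in H$ forces $a^{-1}H=aH$ and $Ha^{-1}=Ha$, so the two elements lie in the same box; one could equally verify the box of $b^{-1}ab$ by the same elementary coset computation used there.

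Finally I would read off the graph-theoretic conclusion. The transformation changes only the element $a$ --- $b$ is used but returned unchanged, and the single inversion move involved, applied and then undone, merely reverses and restores the orientation of the edge $b$ --- so in $\Theta_S$ the sole effect is to delete the loop $a$ at $v_k$ and add the loop $b^{-1}ab$ at $v_l$. Since $v_k$ and $v_l$ are exactly the two endpoints of the edge $b$, this is precisely "moving the loop $a$ from one vertex of $b$ to the other"; in the degenerate case $l=k$, when $b$ is itself a loop at $v_k$, there is nothing to prove. I do not expect a genuine obstacle here: the only point needing a little care is the bookkeeping of Nielsen moves so that $b$ is honestly restored, together with keeping the row/column conventions straight, both of which are routine.
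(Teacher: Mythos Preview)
Your proposal is correct and follows essentially the same line as the paper's proof. The paper's proof simply redoes the short coset computation inline (showing $b^{-1}ab\in Hb\cap b^{-1}H$, a diagonal box), whereas you package this by citing \Cref{lem:loop-shift}; the content is identical, and your extra bookkeeping with the coordinates $(k,k)$ and $(l,k)$ and the Nielsen-move restoration of $b$ is accurate and slightly more explicit than the paper's own argument.
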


\begin{proof}
    Since $b\in aH$, we have $b^{-1}a \in H$, so $b^{-1}ab \in Hb$. At the same time $b\in a^{-1}H$ (since $a$ and $a^{-1}$ lie in the same box), so $ab \in H$ and so $b^{-1}ab \in b^{-1}H$. The box $Hb \cap b^{-1}H$ is on the diagonal, so $b^{-1}ab$ is a loop in $\Theta_S$ based at the vertex corresponding to $Hb$ and $b^{-1}H$.
\end{proof}
    
\begin{defn}
    We call a transformation described in \Cref{prop:loop-shift} a \emph{loop shift}, since it corresponds to shifting a loop from one vertex to another, as shown in \Cref{fig:loop-shift} on page~\pageref{fig:loop-shift}.
\end{defn}

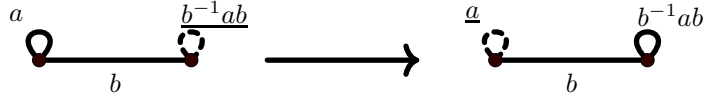
\begin{figure}[h]
    \begin{center}
    	\definecolor{ttqqqq}{rgb}{0.2,0,0}
    	\begin{tikzpicture}[line cap=round,line join=round,x=1cm,y=1cm]
    	\draw [line width=2pt, color=black] (0,0) .. controls (0+0.5,0+0.5) and (0-0.5,0+0.5) .. (0,0);
    	\draw [line width=2pt, dashed, color=black] (2,0) .. controls (2+0.5,0+0.5) and (2-0.5,0+0.5) .. (2,0);
    	\draw [line width=2pt, color=black] (0,0) -- (2,0);
    	\draw [fill=ttqqqq] (0,0) circle (2.5pt);
    	\draw [fill=ttqqqq] (2,0) circle (2.5pt);
    	\draw [color=black] (1,-0.3) node {$b$};
    	\draw [color=black] (-0.3,0.6) node {$a$};
    	\draw [color=black] (2.3,0.6) node {\underline{$b^{-1}ab$}};
    	
    	\draw [<-, line width=2pt] (5,0)-- (3.0,0);
    	
    	\draw [line width=2pt, dashed, color=black] (0+6,0) .. controls (0+0.5+6,0+0.5) and (0-0.5+6,0+0.5) .. (0+6,0);
    	\draw [line width=2pt, color=black] (2+6,0) .. controls (2+0.5+6,0+0.5) and (2-0.5+6,0+0.5) .. (2+6,0);
    	\draw [line width=2pt, color=black] (0+6,0) -- (2+6,0);
    	\draw [fill=ttqqqq] (0+6,0) circle (2.5pt);
    	\draw [fill=ttqqqq] (2+6,0) circle (2.5pt);
    	\draw [color=black] (1+6,-0.3) node {$b$};
    	\draw [color=black] (-0.3+6,0.6) node {\underline{$a$}};
    	\draw [color=black] (2.3+6,0.6) node {$b^{-1}ab$};
    	\end{tikzpicture}
    \end{center}
    \caption{A loop shift.}
    \label{fig:loop-shift}
\end{figure}

\subsubsection{Normal forms - octopuses and sweets}

As in \Cref{sec:normal-form}, we wish to classify the configurations which can be obtained from a given one by performing the operations listed above -- inversions, L-spins and loop shifts. We want to give representatives of each of the classes and show that they are not equivalent to each other.

\begin{defn}
	When given an inverse-dual graph of a configuration (in a self-inverse chessboard), we will call inversions, L-spins and loop shifts \emph{simple moves}. Two configurations (in a self-inverse chessboard) are called \emph{simply equivalent} if one can be obtained from the other with a series of simple moves.
\end{defn}

In fact, since we are considering the undirected inverse-dual graphs, the inversion doesn't do anything to the graph.

\begin{prop}
    \label{prop:sweets_inv}
    Let $S$ be a configuration in a self-inverse chessboard. Let $\Theta_S$ be its inverse-dual graph. Then, performing simple moves doesn't change:
    \begin{enumerate}
        \item the number of edges of $\Theta_S$,
        \item the connectedness of the (undirected) $\Theta_S$ (i.e. whether two vertices are connected by a path or not remains unchanged), which implies that we can restrict our attention to  connected components of inverse-dual graphs,
        \item whether or not a connected component of $\Theta_S$ is bipartite, and if it is:
        \item (if a connected component of $\Theta_S$ is indeed bipartite), the bipartite components of the connected component of $\Theta_S$ (and so, in particular, their sizes).
    \end{enumerate}
\end{prop}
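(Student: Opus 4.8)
The plan is to verify the four invariance claims one simple move at a time, using the explicit translation of L-spins and loop shifts into the inverse-dual graph given just above. Since an inversion $x\mapsto x^{-1}$ leaves the undirected graph $\Theta_S$ unchanged, only L-spins and loop shifts need to be considered. Claim (1) is then immediate: each of these moves replaces a single element of the underlying multiset $S$ (one of $a,b,c$ by $ab^{-1}c$ for an L-spin, $a$ by $b^{-1}ab$ for a loop shift) by a single new element, so $|S|$, and hence the number of edges of $\Theta_S$, is unchanged.

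For (2), I would argue that neither move changes the partition of the fixed vertex set $\{v_1,\dots,v_n\}$ into connected components. For an L-spin, the three participating edges join, respectively, $p$ to $v_i$, $v_i$ to $v_j$, and $v_j$ to $q$ (so together they span a walk through these four vertices), and the move deletes one of them while inserting the new edge $\{p,q\}$; in each of the three choices the vertices $p,v_i,v_j,q$ remain joined by the three edges then present (a routine check, degenerating harmlessly when some of these vertices coincide, as in the loop/parallel-edge pictures \Cref{fig:L-spin.2,fig:L-spin.3,fig:L-spin.4}), and no other edge is touched. For a loop shift, the loop at a vertex $u$ is moved to a vertex $w$ joined to $u$ by the retained edge $b$; loops are irrelevant to connectivity, so again nothing changes. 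Iterating gives the claim for an arbitrary series of simple moves.

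For (3) and (4) I would work inside a single connected component $D$ and phrase bipartiteness as $2$-colourability: a proper colouring of $D$ is a map $\chi$ from its vertices to $\mathbb{Z}/2$ with $\chi(x)\neq\chi(y)$ at the endpoints of every edge (so a loop forbids all colourings). The crux is that an L-spin does not change the \emph{set} of proper $2$-colourings of $D$: every edge outside $\{a,b,c\}$ is untouched, and the constraints imposed by $\{a,b,c\}$ on $\{p,v_i,v_j,q\}$, namely ``$p\ne v_i$, $v_i\ne v_j$, $v_j\ne q$'', have exactly the same solution set (as functions $\{p,v_i,v_j,q\}\to\mathbb{Z}/2$) as the constraints imposed afterwards by the new edge together with the two retained ones; one verifies this identity of constraint sets in each of the three cases, noting that it persists (with both sets empty) whenever a loop is involved, so a non-bipartite component stays non-bipartite. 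A loop shift keeps a loop present in $D$, so $D$ is non-bipartite before and after (and (4) is vacuous for it). Since the set of proper $2$-colourings of each component is preserved, so is the property of being bipartite, giving (3); and for a connected bipartite graph the two colour classes are determined by any proper colouring up to interchanging the two colours, so the unordered pair of bipartite components, and a fortiori their sizes, is preserved, giving (4). Using (2), the component structure to which (3) and (4) refer is stable along any series of simple moves, so the general statement follows.

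The main obstacle is the bookkeeping behind (3)--(4): one has to carry out the constraint-set comparison for all three L-spin choices and, more delicately, check that the degenerate configurations with loops and/or parallel edges (\Cref{fig:L-spin.2,fig:L-spin.3,fig:L-spin.4}) behave as claimed --- in particular that an L-spin involving a loop never turns a non-bipartite component bipartite, which is exactly the point where one must consult the triangle/parallel-edge pictures rather than the generic case \Cref{fig:L-spin.0}.
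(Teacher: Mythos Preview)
Your argument is correct. Parts (1) and (2) match the paper's proof essentially verbatim.

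For (3) and (4) your route differs slightly from the paper's. The paper argues only the forward direction: if the component is bipartite, then there are no loops (so loop shifts do not arise), and the length-$3$ walk $p,v_i,v_j,q$ underlying an L-spin must alternate between the two colour classes, so $p$ and $q$ lie in different classes and the new edge $\{p,q\}$ respects the bipartition; hence the bipartite components are unchanged. The converse (non-bipartite stays non-bipartite) is left implicit, and the cleanest justification is that every simple move is reversible by another simple move --- so if a non-bipartite component could become bipartite, reversing would contradict the forward direction. Your framing via invariance of the \emph{set} of proper $2$-colourings is a nice symmetric alternative that handles both directions at once, at the cost of the three-case constraint comparison and the degenerate checks you flag. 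Either way the obstacle you worry about at the end --- an L-spin involving a loop making a component bipartite --- dissolves immediately once you note reversibility, so you can avoid the figure-by-figure bookkeeping there if you wish.
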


\begin{proof}
    (1) Since Nielsen moves don't change the number of elements of a configuration and each  simple move corresponds to series of Nielsen moves, none of them changes the number of elements of $S$, i.e. the number of edges of $\Theta_S$.
    
    (2) In an L-spin all vertices involved are connected in the beginning and remain connected after the L-spin. This is also the case for the loop shift. Thus, simple moves don't change the connected components of $\Theta_S$.
    
    (3 \& 4) If a connected component of $\Theta_S$ is bipartite, then there are no loops (so, no loop shifts are possible) and then the walk (i.e. a path which may self-intersect) of length $3$ needed for performing the L-spin must have the vertices alternatingly in the two bipartite components. The new edge is between the first and last vertex, which must be in different connected components, and therefore the new edge is between the two bipartite components, not changing them.
\end{proof}

Now, we are going to show the possible `normal forms' of connected graphs that aren't bipartite.

\begin{defn}
    An undirected graph is called an \emph{octopus} if there exists a vertex $v_0$ (called the \emph{base}) such that for all vertices $v \ne v_0$ the degree of $v$ is $1$ and there exists a unique edge $vv_0$; these edges are called \emph{legs} of an octopus. The loops on $v_0$ are called \emph{heads} of an octopus.
\end{defn}

It is easy to see that the number of legs of an octopus is one less than the number of the vertices. An example of an octopus is given in \Cref{fig:octopus}.

\begin{figure}[h]
    \begin{center}
    	\definecolor{ttqqqq}{rgb}{0.2,0,0}
    	\begin{tikzpicture}[line cap=round,line join=round,x=1cm,y=1cm]
    	\draw [line width=2pt] (0,0)-- (-1.73,-1);
    	\draw [line width=2pt] (0,0)-- (-1,-1.73);
    	\draw [line width=2pt] (0,0)-- (0,-2);
    	\draw [line width=2pt] (0,0)-- (+1,-1.73);
    	\draw [line width=2pt] (0,0)-- (+1.73,-1);
    	\draw [line width=2pt, color=black](0,0) .. controls (-2,0) and (0,2) .. (0,0);
    	\draw [line width=2pt, color=black](0,0) .. controls (2,0) and (0,2) .. (0,0);
    	\draw [line width=2pt, color=black](0,0) .. controls (-1.5,1.5) and (1.5,1.5) .. (0,0);
    	\draw [fill=ttqqqq] (0,0) circle (2.5pt);
    	\draw [fill=ttqqqq] (-1.73,-1) circle (2.5pt);
    	\draw [fill=ttqqqq] (-1,-1.73) circle (2.5pt);
    	\draw [fill=ttqqqq] (0,-2) circle (2.5pt);
    	\draw [fill=ttqqqq] (+1,-1.73) circle (2.5pt);
    	\draw [fill=ttqqqq] (+1.73,-1) circle (2.5pt);
    	\end{tikzpicture}
    \end{center}
    \caption{An octopus.}
    \label{fig:octopus}
\end{figure}
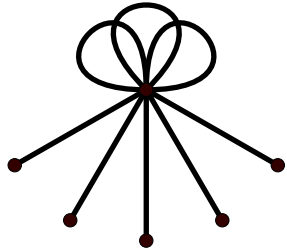

\begin{prop}
    \label{prop:normal_form_oct}
    Let $S$ be a configuration in a self-inverse chessboard, such that $\Theta_S$ is connected but not bipartite. Then, using simple moves, we can transform $S$ to some $S'$ such that $\Theta_{S'}$ is an octopus.
\end{prop}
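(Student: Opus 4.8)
The plan is to work entirely in the inverse-dual graph $\Theta_S$ and use the simple moves (L-spins and loop shifts) to successively reduce $\Theta_S$ to an octopus, keeping track of the invariants guaranteed by \Cref{prop:sweets_inv}. Since $\Theta_S$ is connected and \emph{not} bipartite, it must contain an odd closed walk, and in fact (after possibly using a loop shift to relocate loops) it contains either a loop or an odd cycle. The key structural observation is that an L-spin, viewed in $\Theta_S$, replaces one edge of a length-$3$ walk $v_i \to v \to v_j$ (with the middle edge traversed ``backwards'') by the single edge $v_iv_j$; so an L-spin is exactly a ``short-cutting'' move along walks of length $3$. The target octopus has a distinguished base vertex $v_0$ carrying all loops (heads) and with every other edge a leg $v_0v$ to a degree-$1$ vertex.

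First I would fix a base vertex $v_0$: since $\Theta_S$ is non-bipartite and connected, pick an odd cycle (or a loop), and using loop shifts move a loop to some vertex, or using L-spins contract an odd cycle down to a loop at a chosen vertex $v_0$ — concretely, a triangle $v_0v_1v_2$ with an appropriate orientation admits an L-spin turning one of its edges into the loop $v_0v_0$, and longer odd cycles can first be shortened. This establishes that we may assume $v_0$ carries at least one loop. Next, I would show that \emph{any} edge $e = xy$ not incident to $v_0$ can be ``pulled in'' towards $v_0$: since $\Theta_S$ is connected there is a path from $v_0$ to $x$, and walking along consecutive edges one can repeatedly apply L-spins (each of which decreases the graph-distance of the far endpoint of $e$ from $v_0$, in the spirit of \Cref{fig:move-closer}) until $e$ becomes incident to $v_0$; the presence of the loop at $v_0$ guarantees the walk-of-length-$3$ needed for the final step even in degenerate situations. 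Then, I would show that an edge $e = v_0x$ with $\deg(x) \geq 2$ can have its ``other'' edges at $x$ moved off $x$: if $f = xz$ is another edge at $x$, the walk $v_0 \to x \to z$ (traversing $e$ then $f$) lets us L-spin $f$ into the edge $v_0z$, strictly reducing $\sum_v \max(\deg(v) - 1, 0)$ over $v \neq v_0$; when $z = v_0$ this L-spin produces a loop at $v_0$ (a new head), which is fine. Iterating, every vertex other than $v_0$ ends with degree $1$, and every edge is either a leg or a head — i.e.\ $\Theta_{S'}$ is an octopus.

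The main obstacle — and the step deserving the most care — is the degenerate cases: when the length-$3$ walk required for an L-spin has repeated vertices (the loop at $v_0$, parallel edges, the new edge coinciding with an existing box), one must verify the move is still a legitimate (possibly degenerate) L-spin or loop shift in the sense of \Cref{defn:L-spin} and \Cref{prop:loop-shift}, and that the relevant monovariant still strictly decreases. This is exactly the content illustrated in \Cref{fig:L-spin.1,fig:L-spin.2,fig:L-spin.3,fig:L-spin.4}: L-spins in $\Theta_S$ in the presence of loops and parallel edges behave combinatorially the same way, so I would appeal to those figures rather than re-deriving each case. A secondary point to check is termination: each reduction step strictly decreases a nonnegative integer quantity — first the total distance-to-$v_0$ of out-of-reach edges, then the excess-degree sum $\sum_{v\neq v_0}(\deg(v)-1)^{+}$ — while the number of edges and the connectedness stay fixed by \Cref{prop:sweets_inv}, so the process halts at an octopus $\Theta_{S'}$.
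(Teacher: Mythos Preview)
Your proposal is correct and follows essentially the same three-stage approach as the paper: first contract an odd cycle to a loop at a base vertex $v_0$, then use L-spins (with the loop at $v_0$ as one of the three edges) to bring every vertex to distance $1$ from $v_0$, and finally convert any surplus edges among the neighbours of $v_0$ into additional loops at $v_0$. Your explicit monovariants for termination and your attention to the degenerate L-spin cases are nice additions, though note that the relevant picture for step two is \Cref{fig:oct.2} (which uses the loop $v_0v_0$) rather than \Cref{fig:move-closer} (which lives in the $\Sigma_S$ setting).
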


\begin{proof}
    Since $\Theta_S$ isn't bipartite, there is an odd cycle in it, let's say it is $v_0v_1 \ldots v_{2n}$. We can use it to obtain a loop at $v_n$ by performing an L-spin on $v_1v_0,\:v_0v_{2n},\:v_{2n}v_{2n-1}$ interchanging $v_0v_{2n}$ with $v_1v_{2n-1}$, then with $v_2v_{2n-2}$ until getting $v_nv_n$, which is a loop at $v_n$. This procedure for $2n = 4$ is shown in \Cref{fig:oct.1}.
    
    \begin{figure}[h]
        \begin{center}
        	\definecolor{ttqqqq}{rgb}{0.2,0,0}
        	\begin{tikzpicture}[line cap=round,line join=round,x=1cm,y=1cm, scale=0.5]
        	\draw [fill=ttqqqq] (0,2) circle (3pt);
        	\draw [fill=ttqqqq] (-1.9,0.62) circle (3pt);
        	\draw [fill=ttqqqq] (-1.18,-1.62) circle (3pt);
        	\draw [fill=ttqqqq] (+1.9,0.62) circle (3pt);
        	\draw [fill=ttqqqq] (+1.18,-1.62) circle (3pt);
        	\draw [line width=2pt] (0,2) -- (-1.9,0.62);
        	\draw [line width=2pt] (0,2) -- (+1.9,0.62);
        	\draw [line width=2pt] (-1.9,0.62) -- (-1.18,-1.62);
        	\draw [line width=2pt] (+1.9,0.62) -- (+1.18,-1.62);
        	\draw [line width=2pt] (-1.18,-1.62) -- (+1.18,-1.62);
        	\draw [color=black] (0,2.3) node {$v_2$};
        	\draw [color=black] (-2.4,0.62) node {$v_1$};
        	\draw [color=black] (+2.4,0.62) node {$v_3$};
        	\draw [color=black] (-1.3,-2) node {$v_0$};
        	\draw [color=black] (+1.3,-2) node {$v_4$};
        	
        	\draw [<-, line width=2pt] (5,0)-- (3.0,0);
        	
        	\draw [fill=ttqqqq] (0+8,2) circle (3pt);
        	\draw [fill=ttqqqq] (-1.9+8,0.62) circle (3pt);
        	\draw [fill=ttqqqq] (-1.18+8,-1.62) circle (3pt);
        	\draw [fill=ttqqqq] (+1.9+8,0.62) circle (3pt);
        	\draw [fill=ttqqqq] (+1.18+8,-1.62) circle (3pt);
        	\draw [line width=2pt] (0+8,2) -- (-1.9+8,0.62);
        	\draw [line width=2pt] (0+8,2) -- (+1.9+8,0.62);
        	\draw [line width=2pt] (-1.9+8,0.62) -- (-1.18+8,-1.62);
        	\draw [line width=2pt] (+1.9+8,0.62) -- (+1.18+8,-1.62);
        	\draw [line width=2pt] (-1.9+8,0.62) -- (+1.9+8,0.62);
        	
        	\draw [<-, line width=2pt] (5+8,0)-- (3.0+8,0);        
        	
        	\draw [fill=ttqqqq] (0+16,2) circle (3pt);
        	\draw [fill=ttqqqq] (-1.9+16,0.62) circle (3pt);
        	\draw [fill=ttqqqq] (-1.18+16,-1.62) circle (3pt);
        	\draw [fill=ttqqqq] (+1.9+16,0.62) circle (3pt);
        	\draw [fill=ttqqqq] (+1.18+16,-1.62) circle (3pt);
        	\draw [line width=2pt] (0+16,2) -- (-1.9+16,0.62);
        	\draw [line width=2pt] (0+16,2) -- (+1.9+16,0.62);
        	\draw [line width=2pt] (-1.9+16,0.62) -- (-1.18+16,-1.62);
        	\draw [line width=2pt] (+1.9+16,0.62) -- (+1.18+16,-1.62);
        	\draw [line width=2pt] (16,2) .. controls (16-1,2+1) and (16+1,2+1) .. (16,2);
        	\end{tikzpicture}
        \end{center}
        \caption{Constructing a loop in the case of a non-bipartite graph.}
        \label{fig:oct.1}
    \end{figure}
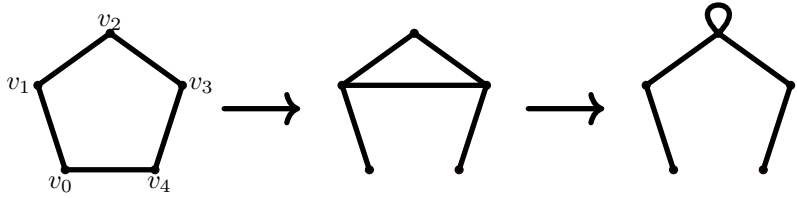
    
    Having obtained a loop at a vertex $v_0$, we can reduce the distance to $v_0$ of any vertex performing the following operation (which bears similarity to what we've done in the process of defining normal forms for non self-inverse chessboards). If $v_2$ is of distance 2 (in $\Theta_S$) from $v_0$, then let $v_0v_1v_2$ be a path. We can perform an L-spin on $v_0v_0, \: v_0v_1, \: v_1v_2$ interchanging $v_1v_2$ with $v_0v_2$ and thereby decreasing the distance of $v_2$ to $v_0$ without increasing the distance to $v_0$ of any other vertex. This procedure is illustrated in \Cref{fig:oct.2}.
    
    \begin{figure}[h]
	    \begin{center}
	    	\definecolor{ttqqqq}{rgb}{0.2,0,0}
	    	\begin{tikzpicture}[line cap=round,line join=round,x=1cm,y=1cm]
	    	
	    	\draw [line width=2pt] (0+6,0)-- (2+6,0);
	    	\draw [line width=2pt] (2+6,0)-- (1+6,1.732);
	    	\draw [line width=2pt, color=black](1+6,1.732) .. controls (1+1+6,1.732+1) and (1-1+6,1.732+1) .. (1+6,1.732);
	    	\draw [fill=ttqqqq] (1+6,1.732) circle (2.5pt);
	    	\draw [fill=ttqqqq] (0+6,0) circle (2.5pt);
	    	\draw [fill=ttqqqq] (2+6,0) circle (2.5pt);
	    	\draw [color=black] (-0.3+6,-0.2) node {$v_2$};
	    	\draw [color=black] (2.3+6,-0.2) node {$v_1$};
	    	\draw [color=black] (-0.4+1+6,1.732) node {$v_0$};
	    	
	    	\draw [<-, line width=2pt] (5+6,1)-- (3+6,1);        
	    	
	    	\draw [line width=2pt] (2+12,0)-- (1+12,1.732);
	    	\draw [line width=2pt] (1+12,1.732)-- (0+12,0);
	    	\draw [line width=2pt, color=black](1+12,1.732) .. controls (1+1+12,1.732+1) and (1-1+12,1.732+1) .. (1+12,1.732);
	    	\draw [fill=ttqqqq] (1+12,1.732) circle (2.5pt);
	    	\draw [fill=ttqqqq] (0+12,0) circle (2.5pt);
	    	\draw [fill=ttqqqq] (2+12,0) circle (2.5pt);
	    	\end{tikzpicture}
	    \end{center}
	    \caption{Decreasing the distances to $v_0$.}
	    \label{fig:oct.2}
	\end{figure}
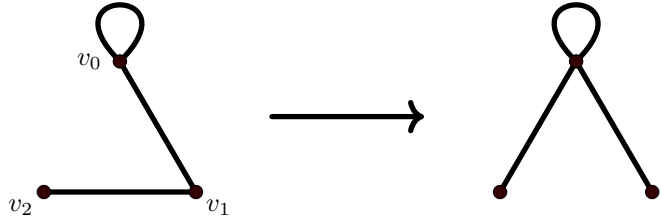
    
    Now we can assume that all vertices of $\Theta_S$ other than $v_0$ are of distance $1$ to $v_0$. Finally, if we get any edges between the vertices of distance $1$ to $v_0$, say an edge $v_1v_2$, we can move them to loops at $v_0$ via an L-spin on $v_1v_2, \:v_2v_0, \:v_0v_0$ interchanging $v_1v_2$ with $v_0v_0$ giving an additional head of the octopus. This procedure is illustrated in \Cref{fig:oct.3}.
    
    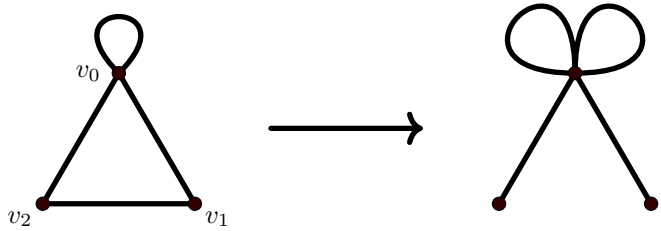
\begin{figure}[h]
        \begin{center}
        	\definecolor{ttqqqq}{rgb}{0.2,0,0}
        	\begin{tikzpicture}[line cap=round,line join=round,x=1cm,y=1cm]
        	
        	\draw [line width=2pt] (0+6,0)-- (2+6,0);
        	\draw [line width=2pt] (2+6,0)-- (1+6,1.732);
        	\draw [line width=2pt, color=black] (1+6,1.732)-- (0+6,0);
        	\draw [line width=2pt, color=black](1+6,1.732) .. controls (1+1+6,1.732+1) and (1-1+6,1.732+1) .. (1+6,1.732);
        	\draw [fill=ttqqqq] (1+6,1.732) circle (2.5pt);
        	\draw [fill=ttqqqq] (0+6,0) circle (2.5pt);
        	\draw [fill=ttqqqq] (2+6,0) circle (2.5pt);
        	\draw [color=black] (-0.3+6,-0.2) node {$v_2$};
        	\draw [color=black] (2.3+6,-0.2) node {$v_1$};
        	\draw [color=black] (-0.4+1+6,1.732) node {$v_0$};
        	
        	\draw [<-, line width=2pt] (5+6,1)-- (3+6,1);        
        	
        	\draw [line width=2pt] (2+12,0)-- (1+12,1.732);
        	\draw [line width=2pt] (1+12,1.732)-- (0+12,0);
        	\draw [line width=2pt, color=black](1+12,1.732) .. controls (1-2+12,1.732) and (1+12,1.732+2) .. (1+12,1.732);
        	\draw [line width=2pt, color=black](1+12,1.732) .. controls (1+12,1.732+2) and (1+12+2,1.732) .. (1+12,1.732);
        	\draw [fill=ttqqqq] (1+12,1.732) circle (2.5pt);
        	\draw [fill=ttqqqq] (0+12,0) circle (2.5pt);
        	\draw [fill=ttqqqq] (2+12,0) circle (2.5pt);
        	\end{tikzpicture}
        \end{center}
        \caption{Final step towards an octopus.}
        \label{fig:oct.3}
    \end{figure}
    
    In the end we are left with a (possibly multiheaded) octopus.
\end{proof}

Now, we need to consider the case of the graph being bipartite.

\begin{defn}
    A graph is called a \emph{sweet} if there is an edge $v_0v_1$ such that every vertex $v$ other than $v_0$ and $v_1$ is connected by an edge to either $v_0$ or $v_1$, and such that every vertex other than $v_0$ and $v_1$ is of degree $1$. We call $v_0$ and $v_1$ the \emph{bases} of the sweet, the set of all edges $v_0v_1$ the \emph{core} of the sweet, while the other edges are called \emph{sticks}. 
\end{defn}

\begin{prop}
    \label{prop:normal_form_sweet}
    Let $S$ be a configuration in a self-inverse chessboard, such that $\Theta_S$ is connected and bipartite. Then, using simple moves, we can transform $S$ to some $S'$ such that $\Theta_{S'}$ is a sweet.
\end{prop}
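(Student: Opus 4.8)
The plan is to run the argument of \Cref{prop:normal_form_oct} with the core edge playing the role that a loop at the base played there. \textbf{Setup.} First I would two-colour the vertices of $\Theta_S$, writing the vertex set as $A\sqcup B$; this is possible since $\Theta_S$ is connected and bipartite. We may assume $S$ is nonempty, so $\Theta_S$ has an edge; pick one and call its endpoints $v_0\in A$ and $v_1\in B$. This edge will be kept (and later enlarged) as the core, and $v_0,v_1$ will be the bases of the sweet we produce. By \Cref{prop:sweets_inv}, throughout the argument $\Theta_S$ stays connected and keeps the colour classes $A,B$ fixed, so every new edge created by a simple move joins a vertex of $A$ to a vertex of $B$.

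\textbf{Stage 1: pull every vertex to within distance $1$ of $\{v_0,v_1\}$.} As in \Cref{prop:normal_form_oct} I would use the monovariant $\sum_{v}\operatorname{dist}(v,\{v_0,v_1\})$. If some vertex lies at distance $\geq 2$ from $\{v_0,v_1\}$, choose such a vertex $w$ of minimal distance; since $\operatorname{dist}(w,v_0)$ and $\operatorname{dist}(w,v_1)$ differ by exactly $1$ (triangle inequality across the core edge), minimality forces the distance to $\{v_0,v_1\}$ to be exactly $2$. If $w\in B$ then $\operatorname{dist}(w,v_1)=2$, so there is a path $v_1\,u\,w$ with $u\in A$ and $u\neq v_0$; performing the L-spin on the length-$3$ walk $v_0\to v_1\to u\to w$ (its edges are the core edge $v_0v_1$, then $v_1u$, then $uw$) replaces $uw$ by the new edge $v_0w$, making $w$ adjacent to $v_0$. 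If $w\in A$ then $\operatorname{dist}(w,v_0)=2$ and the symmetric move on $v_1\to v_0\to u\to w$ makes $w$ adjacent to $v_1$. Because such a move only re-wires edges inside the radius-$2$ ball about $\{v_0,v_1\}$, no vertex moves farther from $\{v_0,v_1\}$ and $w$ moves strictly closer, so the monovariant strictly drops; iterating, we reach a configuration in which every vertex equals $v_0$ or $v_1$, or is adjacent to exactly one of them.

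\textbf{Stage 2: remove the remaining non-sweet edges.} Now I would drive the nonnegative quantity $\sum_{v\neq v_0,v_1}(\deg v-1)$ to $0$. Suppose a non-base vertex $v_2$, say $v_2\in A$, has $\deg v_2\geq 2$. By Stage 1 it is joined to $v_1$, and its other edge is either a second, parallel, edge to $v_1$, or an edge to a non-base vertex $v_3\in B$, which by Stage 1 is joined to $v_0$. In the first case the L-spin on the walk $v_0\to v_1\to v_2\to v_1$ turns one of the two parallel edges $v_1v_2$ into a new core edge $v_0v_1$; in the second case the L-spin on the walk $v_0\to v_3\to v_2\to v_1$ turns the middle edge $v_2v_3$ into a new core edge $v_0v_1$. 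In either case $v_2$ stays joined to $v_1$ and $v_3$ stays joined to $v_0$, at least one core edge remains, and $\sum_{v\neq v_0,v_1}(\deg v-1)$ strictly decreases. When it reaches $0$ every non-base vertex has degree $1$; since $\Theta_S$ is connected and a core edge $v_0v_1$ survives, each such vertex must be joined to $v_0$ or $v_1$ (otherwise its unique edge would be an isolated edge, contradicting connectivity). Hence the resulting $\Theta_{S'}$ is a sweet with bases $v_0,v_1$, as required.

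\textbf{Main obstacle.} The only non-routine point is the choice of pivot: the bipartite setting has no loops, so the loop at the base used in \Cref{prop:normal_form_oct} is unavailable, and the fix is to use the length-one core edge $v_0v_1$ as the fixed anchor inside every length-$3$ L-spin walk. The rest is bookkeeping that I do not expect to cause trouble: checking that each walk above genuinely admits the stated (possibly parallel-edge) L-spin by comparison with Figures~\ref{fig:L-spin.1}--\ref{fig:L-spin.4}; and verifying in Stage 1 that the re-wiring does not increase any vertex's distance to $\{v_0,v_1\}$, which follows because the only edge removed is $uw$, whose endpoint $u$ is already adjacent to a base while $w$ becomes adjacent to a base.
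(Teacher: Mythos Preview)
Your proposal is correct and follows essentially the same two-stage approach as the paper's proof: fix a core edge $v_0v_1$, use L-spins along walks through this edge to bring every vertex to distance $1$ from $\{v_0,v_1\}$, then absorb any remaining non-stick edges into the core. Your write-up is in fact more careful than the paper's in two places---you give explicit monovariants for termination, and you handle the parallel-edge case (a non-base vertex with two edges to the same base) that the paper's Stage~2 passes over silently.
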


\begin{proof}
    Let us then choose a particular edge $v_0v_1$, where $v_0$ and $v_1$ will be bases for the sweet. Now, if there is a vertex (say $v_3$) of distance to $\set{v_0,v_1}$ bigger than $1$, we can reduce it in a similar way to what was done in the proof of \Cref{prop:normal_form_oct}: Let $v_1v_2v_3$ be the path of length $2$. We perform an L-spin on $v_0v_1, \:v_1v_2,\:v_2v_3$ interchanging $v_2v_3$ with $v_0v_3$ and thereby reducing the distance of $v_3$ to $\set{v_0,v_1}$ without increasing the distances of any other vertices to it. A double usage of that procedure is shown in \Cref{fig:normal_form_sweet_red}, first applying that to $v_3$, then to $v_4$.
    
    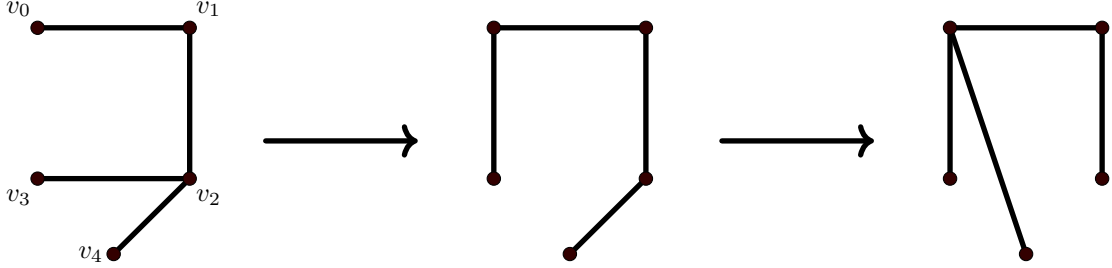
\begin{figure}[h]
        \begin{center}
        	\definecolor{ttqqqq}{rgb}{0.2,0,0}
        	\begin{tikzpicture}[line cap=round,line join=round,x=1cm,y=1cm]
        	\draw [line width=2pt] (0,0)-- (2,0);
        	\draw [line width=2pt] (2,0)-- (2,-2);
        	\draw [line width=2pt, color=black] (2,-2)-- (0,-2);
        	\draw [line width=2pt, color=black] (2,-2)-- (1,-3);
        	\draw [fill=ttqqqq] (0,0) circle (2.5pt);
        	\draw [fill=ttqqqq] (2,0) circle (2.5pt);
        	\draw [fill=ttqqqq] (2,-2) circle (2.5pt);
        	\draw [fill=ttqqqq] (0,-2) circle (2.5pt);
        	\draw [fill=ttqqqq] (1,-3) circle (2.5pt);
        	\draw (-0.25,0.25) node {$v_0$};
        	\draw (2.25,0.25) node {$v_1$};
        	\draw (-0.25,-2.25) node {$v_3$};
        	\draw (2.25,-2.25) node {$v_2$};
        	\draw (1-0.3,-3) node {$v_4$};;
        	
        	\draw [<-, line width=2pt] (5,-1.5)-- (3,-1.5);
        	
        	\draw [line width=2pt] (0+6,0)-- (2+6,0);
        	\draw [line width=2pt] (2+6,0)-- (2+6,-2);
        	\draw [line width=2pt, color=black] (0+6,0)-- (0+6,-2);
        	\draw [line width=2pt, color=black] (2+6,-2)-- (1+6,-3);
        	\draw [fill=ttqqqq] (0+6,0) circle (2.5pt);
        	\draw [fill=ttqqqq] (2+6,0) circle (2.5pt);
        	\draw [fill=ttqqqq] (2+6,-2) circle (2.5pt);
        	\draw [fill=ttqqqq] (0+6,-2) circle (2.5pt);
        	\draw [fill=ttqqqq] (1+6,-3) circle (2.5pt);
        	
        	\draw [<-, line width=2pt] (5+6,-1.5)-- (3+6,-1.5);
        	
        	\draw [line width=2pt] (0+6+6,0)-- (2+6+6,0);
        	\draw [line width=2pt] (2+6+6,0)-- (2+6+6,-2);
        	\draw [line width=2pt, color=black] (0+6+6,0)-- (0+6+6,-2);
        	\draw [line width=2pt, color=black] (0+6+6,0)-- (1+6+6,-3);
        	\draw [fill=ttqqqq] (0+6+6,0) circle (2.5pt);
        	\draw [fill=ttqqqq] (2+6+6,0) circle (2.5pt);
        	\draw [fill=ttqqqq] (2+6+6,-2) circle (2.5pt);
        	\draw [fill=ttqqqq] (0+6+6,-2) circle (2.5pt);
        	\draw [fill=ttqqqq] (1+6+6,-3) circle (2.5pt);
        	\end{tikzpicture}
        \end{center}
        \caption{Decreasing the distance of $v_3$ and $v_4$ to $\set{v_0,v_1}$.}
        \label{fig:normal_form_sweet_red}
    \end{figure}
    
    Now, we can assume that all of the vertices are of distance $1$ to $\set{v_0,v_1}$. Note that there can't be any edges within the neighbours of $v_0$ since the graph is bipartite, similarly with the neighbours of $v_1$. If there are any edges between the neighbours of $v_0$ and the neighbours of $v_1$, we can move these edges parallel to $v_0v_1$ obtaining a sweet. This is done via an L-spin. A double usage is shown in \Cref{fig:normal_form_sweet_core} where we first move the edge $v_2v_3$ and then $v_2v_4$.
    
    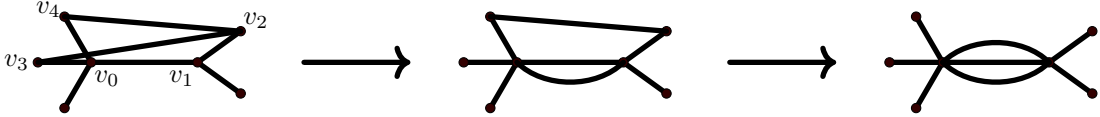
\begin{figure}[h]
		\begin{center}
			\definecolor{ttqqqq}{rgb}{0.2,0,0}
			\begin{tikzpicture}[line cap=round,line join=round,x=1cm,y=1cm, scale=0.7]
			
			\draw [line width=2pt] (0,0)-- (2,0);
			\draw [line width=2pt, color=black] (2,0)-- (2+0.81,0+0.59);
			\draw [line width=2pt, color=black] (2,0)-- (2+0.81,0-0.59);
			\draw [line width=2pt, color=black] (0,0)-- (-0.5,1.732/2);
			\draw [line width=2pt, color=black] (0,0)-- (-0.5,-1.732/2);
			\draw [line width=2pt, color=black] (0,0)-- (-1,0);
			\draw [fill=ttqqqq] (0,0) circle (2.5pt);
			\draw [fill=ttqqqq] (2,0) circle (2.5pt);
			\draw [fill=ttqqqq] (2+0.81,0+0.59) circle (2.5pt);
			\draw [fill=ttqqqq] (2+0.81,0-0.59) circle (2.5pt);
			\draw [fill=ttqqqq] (-0.5,1.732/2) circle (2.5pt);
			\draw [fill=ttqqqq] (-0.5,-1.732/2) circle (2.5pt);
			\draw [fill=ttqqqq] (-1,0) circle (2.5pt);
			\draw [color=black] (3.1,0.8) node {$v_2$};
			\draw [color=black] (-1.4,0) node {$v_3$};
			\draw [color=black] (-0.8,1) node {$v_4$};
			\draw [color=black] (0.3,-0.3) node {$v_0$};
			\draw [color=black] (1.7,-0.3) node {$v_1$};
			
			\draw [line width=2pt, color=black] (-0.5,1.732/2)-- (2+0.81,0+0.59);
			\draw [line width=2pt, color=black] (-1,0)-- (2+0.81,0+0.59);
			
			\draw [<-, line width=2pt] (6,0)-- (4,0);
			
			\draw [line width=2pt] (0+8,0)-- (2+8,0);
			\draw [line width=2pt, color=black] (2+8,0)-- (2+0.81+8,0+0.59);
			\draw [line width=2pt, color=black] (2+8,0)-- (2+0.81+8,0-0.59);
			\draw [line width=2pt, color=black] (0+8,0)-- (-0.5+8,1.732/2);
			\draw [line width=2pt, color=black] (0+8,0)-- (-0.5+8,-1.732/2);
			\draw [line width=2pt, color=black] (0+8,0)-- (-1+8,0);
			\draw [fill=ttqqqq] (0+8,0) circle (2.5pt);
			\draw [fill=ttqqqq] (2+8,0) circle (2.5pt);
			\draw [fill=ttqqqq] (2+0.81+8,0+0.59) circle (2.5pt);
			\draw [fill=ttqqqq] (2+0.81+8,0-0.59) circle (2.5pt);
			\draw [fill=ttqqqq] (-0.5+8,1.732/2) circle (2.5pt);
			\draw [fill=ttqqqq] (-0.5+8,-1.732/2) circle (2.5pt);
			\draw [fill=ttqqqq] (-1+8,0) circle (2.5pt);
			
			\draw [line width=2pt, color=black] (-0.5+8,1.732/2)-- (2+0.81+8,0+0.59);
			\draw [line width=2pt, color=black] (0+8,0) .. controls (0.5+8,-0.5) and (1.5+8,-0.5) .. (2+8,0);
			
			\draw [<-, line width=2pt] (6+8,0)-- (4+8,0);
			
			\draw [line width=2pt] (0+8+8,0)-- (2+8+8,0);
			\draw [line width=2pt, color=black] (2+8+8,0)-- (2+0.81+8+8,0+0.59);
			\draw [line width=2pt, color=black] (2+8+8,0)-- (2+0.81+8+8,0-0.59);
			\draw [line width=2pt, color=black] (0+8+8,0)-- (-0.5+8+8,1.732/2);
			\draw [line width=2pt, color=black] (0+8+8,0)-- (-0.5+8+8,-1.732/2);
			\draw [line width=2pt, color=black] (0+8+8,0)-- (-1+8+8,0);
			\draw [fill=ttqqqq] (0+8+8,0) circle (2.5pt);
			\draw [fill=ttqqqq] (2+8+8,0) circle (2.5pt);
			\draw [fill=ttqqqq] (2+0.81+8+8,0+0.59) circle (2.5pt);
			\draw [fill=ttqqqq] (2+0.81+8+8,0-0.59) circle (2.5pt);
			\draw [fill=ttqqqq] (-0.5+8+8,1.732/2) circle (2.5pt);
			\draw [fill=ttqqqq] (-0.5+8+8,-1.732/2) circle (2.5pt);
			\draw [fill=ttqqqq] (-1+8+8,0) circle (2.5pt);
			
			\draw [line width=2pt, color=black] (0+8+8,0) .. controls (0.5+8+8,-0.5) and (1.5+8+8,-0.5) .. (2+8+8,0);
			\draw [line width=2pt, color=black] (0+8+8,0) .. controls (0.5+8+8,+0.5) and (1.5+8+8,+0.5) .. (2+8+8,0);
			
			\end{tikzpicture}
		\end{center}
        \caption{Final step towards a sweet.}
        \label{fig:normal_form_sweet_core}
    \end{figure}
\end{proof}

Now, we notice that a sweet is specified by the number of edges in total and the number of sticks on each side. This allows us to conclude the following.

\begin{prop}\label{prop:os-nf}
    Let $S$ be a configuration in a self-inverse chessboard with a connected inverse-dual graph $\Theta_S$. Then $\Theta_S$ can be transformed via simple moves to a (unique up to vertex relabelling) octopus or sweet, which we call the \emph{normal form} of $S$. Furthermore, the normal form is a sweet if and only if the inverse-dual graph $\Theta_S$ is bipartite.
\end{prop}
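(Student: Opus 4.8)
The plan is to reduce everything to the two constructions already carried out in \Cref{prop:normal_form_oct} and \Cref{prop:normal_form_sweet} together with the invariance statements of \Cref{prop:sweets_inv}. Existence of a normal form is essentially immediate: a connected graph is either bipartite or not, and in the non-bipartite case \Cref{prop:normal_form_oct} transforms $\Theta_S$ by simple moves into an octopus, while in the bipartite case \Cref{prop:normal_form_sweet} transforms it into a sweet. So $\Theta_S$ is always simply equivalent to a graph of one of the two prescribed shapes.

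Next I would establish the characterisation ``sweet $\iff$ bipartite''. A sweet is manifestly bipartite: it has no loops, and one may take the two colour classes to be $\{v_0\}$ together with the degree-one endpoints of the sticks incident to $v_1$, and $\{v_1\}$ together with the degree-one endpoints of the sticks incident to $v_0$. On the other hand, an octopus produced by \Cref{prop:normal_form_oct} always carries at least one head (the construction creates a loop from an odd cycle), and a loop forces non-bipartiteness. Since bipartiteness of the undirected inverse-dual graph is preserved by simple moves (\Cref{prop:sweets_inv}(3)), a non-bipartite $\Theta_S$ cannot be simply equivalent to a sweet, so its normal form must be an octopus (with a head); and a bipartite $\Theta_S$ cannot be simply equivalent to an octopus with a head, so its normal form is the sweet supplied by \Cref{prop:normal_form_sweet}. (In the edge cases where a graph can be drawn both as a head-less octopus and as a sweet — e.g. a star, which is exactly the bipartite situation — we adopt the convention demanded by the statement and call it a sweet; this is consistent with the ``iff''.) In particular, the \emph{shape} of the normal form is already determined by $\Theta_S$.

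For uniqueness up to relabelling I would first record that an octopus is determined up to graph isomorphism by its number of legs and its number of heads, and a sweet by the size of its core together with the unordered pair $\{p,q\}$ of numbers of sticks on its two bases; so it suffices to prove these data are invariant under simple moves. The number of edges is invariant by \Cref{prop:sweets_inv}(1). For the octopus case I additionally need invariance of the number of vertices of $\Theta_S$: I would check directly from the descriptions of L-spins and loop shifts that each such move replaces an element of the configuration by one lying in a column and a row that were \emph{already} occupied (the column of $a$ and the row of $c$ in the notation of \Cref{lem:above}; a row/column already incident to $b$ in a loop shift), and never empties a row or a column — so the sets of occupied rows and columns, and hence the number of vertices of $\Theta_S$, are unchanged. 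Then in an octopus the number of legs equals (number of vertices) $-1$ and the number of heads equals (number of edges) $-$ (number of legs), both invariant. For the sweet case, \Cref{prop:sweets_inv}(4) gives invariance of the sizes of the two bipartite classes; in a sweet these are $1+q$ and $1+p$, so $\{p,q\}$ is recovered, and the core size is (number of edges) $-(p+q)$. Hence the isomorphism type of the normal form is pinned down in both cases.

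The only step that requires genuine (if short) work is the vertex-count invariance used for octopuses: one must run through the small list of L-spin and loop-shift pictures (\Cref{fig:L-spin.1,fig:L-spin.2,fig:L-spin.3,fig:L-spin.4} and \Cref{fig:loop-shift}) and confirm no move ever creates a new occupied row/column or deletes the last occupant of one — this is the self-inverse analogue of the bookkeeping already done for \Cref{prop:L-spin form determined}, now extended to loop shifts and to loops and parallel edges. Everything else is a direct appeal to \Cref{prop:normal_form_oct}, \Cref{prop:normal_form_sweet}, and \Cref{prop:sweets_inv}.
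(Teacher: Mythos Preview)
Your proof is correct and follows essentially the same route as the paper: existence via \Cref{prop:normal_form_oct} and \Cref{prop:normal_form_sweet}, the bipartite/non-bipartite dichotomy from \Cref{prop:sweets_inv}(3), and uniqueness from the invariants in \Cref{prop:sweets_inv}. One remark: the extra work you do on vertex-count invariance is unnecessary, because by definition the vertex set of $\Theta_S$ is the fixed set $\{v_1,\ldots,v_n\}$ indexed by the columns of the chessboard---simple moves only change edges, not vertices---so the number of vertices is trivially invariant (and connectedness, which \emph{is} in \Cref{prop:sweets_inv}(2), guarantees none becomes isolated).
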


\begin{proof}
    The fact that $\Theta_S$ can be transformed to an octopus (if $\Theta_S$ isn't bipartite) and a sweet (if $\Theta_S$ is bipartite) is a consequence of Propositions~\ref{prop:normal_form_oct} and~\ref{prop:normal_form_sweet}.
    
    Now, as shown in \Cref{prop:sweets_inv}, the simple moves do not change whether or not $\Theta_S$ is bipartite, and, if it is bipartite, its bipartite components. So an octopus is possible if and only if $\Theta_S$ isn't bipartite, while a sweet only if $\Theta_S$ is bipartite. An octopus is determined by the number of elements of $S$ and number of its vertices (i.e. rows/columns that $S$ can occupy), so no two different octopuses are equivalent. A sweet is similarly determined by the number of its edges, its vertices and the sizes of its connected components. Since these are invariant by \Cref{prop:sweets_inv}, two sweets can be transformed by simple moves into each other only by relabelling of the vertices.
\end{proof}

\subsubsection{Solvable configurations}

Just like in \Cref{sec:solvable_non-self-inverse}, we want to be able to tell which of the inverse-dual graphs can be transformed into left-right diagonal sets. For that we first study what being left-right diagonal corresponds to in the inverse-dual graph.


\begin{prop}
    Let $S$ be a configuration in a self-inverse chessboard. Then $S$ is left-right diagonal if and only if $\Theta_S$ is a union of disjoint (directed) cycles.
\end{prop}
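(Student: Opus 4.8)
The plan is to translate the condition ``left-right diagonal'' directly into a condition on the degrees of $\Theta_S$, and then apply the elementary classification of directed multigraphs with bounded in- and out-degree. Recall that an element $s\in S$ occupying the box of coordinates $(i,j)$, i.e.\ $s\in a_jH\cap Ha_i^{-1}$, contributes the edge $v_i\to v_j$ of $\Theta_S$; thus the \emph{row} of $s$ is recorded by the \emph{tail} of its edge and the \emph{column} of $s$ by the \emph{head}. Hence two elements of $S$ lie in the same right coset $Ha_i^{-1}$ exactly when their edges share the tail $v_i$, and they lie in the same left coset $a_jH$ exactly when their edges share the head $v_j$; two elements lying in the same box correspond to a repeated edge, and such a pair shares both a row and a column. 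Consequently $S$ is left-right diagonal if and only if $\Theta_S$ has no two edges with a common tail and no two edges with a common head --- equivalently, $\Theta_S$ has no repeated edges and every vertex has out-degree at most $1$ and in-degree at most $1$ (an element on the diagonal of the chessboard being recorded as a loop, which contributes $1$ to both degrees of its vertex).

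Next I would invoke the standard structure of such digraphs: in a finite directed multigraph in which every vertex has in-degree $\le 1$ and out-degree $\le 1$, following each vertex forward along its (at most one) outgoing edge exhibits the graph as a vertex-disjoint union of directed paths and directed cycles, a loop counting as a $1$-cycle. For the forward implication, the configuration has equally many rows, columns and vertices, say $n$, and in the setting of interest --- inherited from \Cref{lem: solvable}, where the relevant configurations occupy each of their rows and columns, equivalently $|S|=n$ --- every vertex of $\Theta_S$ then has out-degree exactly $1$ and in-degree exactly $1$. A digraph all of whose vertices have in- and out-degree $1$ has no path components and is therefore a disjoint union of directed cycles: concretely, it is the graph of the permutation $v_i\mapsto v_j$ determined by $S$, decomposed into its cycles. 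For the converse, if $\Theta_S$ is a disjoint union of directed cycles then every vertex has in- and out-degree at most $1$ and there are no repeated edges, so by the dictionary of the first paragraph no two elements of $S$ share a row or a column, i.e.\ $S$ is left-right diagonal.

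There is essentially no obstacle here beyond bookkeeping: the real content of the proposition is the translation of the first paragraph, and the only points needing care are (i) keeping straight that the tail of an edge corresponds to a row and the head to a column, (ii) the degenerate cases, namely a diagonal element being a loop (hence a $1$-cycle) and two elements in one box forcing a repeated edge (which is incompatible with diagonality), and (iii) the standing assumption that the configuration occupies each of its rows and columns, so that the degree bounds ``$\le 1$'' are in fact equalities and path components cannot arise. I would state this assumption explicitly at the start of the proof so that the equivalence with ``union of directed cycles'' (rather than merely ``union of paths and cycles'') is clean.
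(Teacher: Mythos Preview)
Your proof is correct and follows essentially the same route as the paper: translate ``left-right diagonal'' into the degree condition on $\Theta_S$ and invoke the standard decomposition of such digraphs into cycles. You are in fact more careful than the paper, which simply asserts ``exactly one element in each of the columns and in each of the rows'' without flagging the standing assumption you identify in point~(iii); your explicit distinction between the $\le 1$ bound (from diagonality alone) and the $=1$ equality (from the assumption that every row and column is occupied) is a genuine improvement in rigour.
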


\begin{proof}
    Being left-right diagonal means that there is exactly one element in each of the columns and in each of the rows. Coming back to the directed graphs, this means that every vertex has exactly one edge coming in to it and exactly one edge coming out of it. Such a graph is a union of disjoint directed cycles (possibly including loops and $2$-cycles).
\end{proof}

Now, we want to identify the normal forms that these have.

\begin{prop}
    \label{prop:normal-forms}
    In the context of inverse-dual graphs, the normal form of an odd cycle is a single-headed octopus with an even number of legs, while the normal form of an even cycle is a sweet with two edges in the core and equal number of sticks on either side. Also, all such forms correspond to configurations that can be transformed to left-right diagonal configurations via simple moves.
\end{prop}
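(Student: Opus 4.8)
The plan is to deduce all three assertions from ingredients already available: the uniqueness of the normal form of a connected configuration in a self-inverse chessboard (\Cref{prop:os-nf}), the list of data preserved by simple moves --- number of edges, connectedness, bipartiteness, and (in the bipartite case) the two bipartite classes --- (\Cref{prop:sweets_inv}), and the characterisation of left-right diagonal configurations as exactly those whose inverse-dual graph is a disjoint union of directed cycles (the immediately preceding proposition). I will also use that simple moves are reversible: an inversion is its own inverse, an L-spin on $\{a,b,c\}$ is undone by an L-spin on the three resulting elements, and a loop shift along an edge $b$ is undone by a loop shift along $b^{-1}$. Hence ``simply equivalent'' is a genuine equivalence relation and every configuration is simply equivalent to its normal form.

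For the first two assertions I would count invariants. Suppose $\Theta_S$ is an odd cycle $C_m$, $m$ odd: it is connected, not bipartite, with $m$ vertices and $m$ edges, so by \Cref{prop:os-nf} its normal form is the unique octopus on $m$ vertices with $m$ edges. An octopus on $V$ vertices has exactly $V-1$ legs (one per non-base vertex), hence $E-(V-1)$ heads; with $V=E=m$ this is exactly one head and $m-1$ legs, and $m-1$ is even. Suppose instead $\Theta_S$ is an even cycle $C_m$, $m=2k$: it is connected, bipartite, with $m$ vertices, $m$ edges, and two bipartite classes each of size $k$, so by \Cref{prop:os-nf} and \Cref{prop:sweets_inv} its normal form is a sweet on $m$ vertices with $m$ edges whose bipartite classes have size $k$. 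A sweet with bases $v_0,v_1$, core of size $c$, and $p$ respectively $q$ sticks at $v_0$ respectively $v_1$ has $V=p+q+2$ vertices, $E=p+q+c$ edges, and bipartite classes $\{v_1\}\cup(\text{stick-ends at }v_0)$ and $\{v_0\}\cup(\text{stick-ends at }v_1)$ of sizes $p+1$ and $q+1$; matching $V=E=2k$ and class sizes $k,k$ forces $p=q=k-1$ and $c=E-p-q=2$, i.e. a sweet with two core edges and $k-1$ sticks on each side.

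For the last assertion I would argue thus. By the preceding proposition it suffices, for a configuration $S$ with $\Theta_S$ a single cycle $C_m$, to reach a configuration whose inverse-dual graph is a single \emph{directed} cycle. Fix one of the two consistent cyclic orientations of $C_m$; each edge currently oriented the wrong way can be reversed by the inversion $x\mapsto x^{-1}$ of the corresponding element, which is a simple move, so after finitely many inversions $S$ is left-right diagonal. Since $S$ is simply equivalent to its normal form (the single-headed octopus or the sweet found above) and also to this left-right diagonal configuration, the normal form is simply equivalent to a left-right diagonal configuration. Conversely, any single-headed octopus with $m-1$ (even) legs, or any sweet with two core edges and $m/2-1$ sticks on each side, is connected with $m$ vertices, $m$ edges, and --- in the sweet case --- bipartite classes of size $m/2$, so by the uniqueness in \Cref{prop:os-nf} it has the same normal form as $C_m$, hence is simply equivalent to $C_m$ and therefore to a left-right diagonal configuration.

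I do not expect a real obstacle: the whole argument is the bookkeeping that (number of vertices, number of edges, and in the bipartite case the sizes of the two bipartite classes) are exactly the data determining an octopus or a sweet and exactly the data preserved by simple moves, together with the easy orientation argument of the last paragraph. The only points needing care are verifying reversibility of the three simple moves, so a configuration and its normal form may be used interchangeably, and keeping straight which bipartite class of a sweet has size $p+1$ versus $q+1$.
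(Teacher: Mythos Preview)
Your proposal is correct and follows essentially the same approach as the paper: determine the normal form of a cycle by counting the invariants (vertices, edges, bipartiteness, sizes of bipartite classes) and invoking \Cref{prop:os-nf}, then observe that cycles realise every such normal form and can be consistently oriented (via inversions) to give a directed cycle, i.e.\ a left-right diagonal configuration. You are more explicit than the paper about two points the paper leaves implicit---reversibility of simple moves and the orientation step---and you add a ``conversely'' paragraph that the paper omits, but the substance is the same.
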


\begin{proof}
    Odd cycles aren't bipartite, so their normal forms are octopuses by \Cref{prop:os-nf}. Since the number of edges and the number of vertices involved are equal, these octopuses must be single-headed. Odd length implies even number of legs.
    
    Even cycles are bipartite with the bipartite components of equal sizes, so their normal forms are sweets (\Cref{prop:os-nf}) with equal sizes of bipartite components. The number of edges equal to the number of vertices determines the size of the core to be $2$.
    
    Finally, for each number $n$, there are cycles of length $2n$ and $2n+1$, equivalent to respectively a sweet with $n-1$ sticks on each side and an octopus with $2n$ legs. Thus all such normal forms correspond to configurations that can be transformed to be left-right diagonal with simple transformations.
\end{proof}

The proposition motivates the following definition.

\begin{defn}
	A connected inverse-dual graph $\Theta_S$ is \emph{solvable} if its normal form is either an octopus with even number of legs, or a sweet with equal number of sticks on each side.
\end{defn}

Examples of such an octopus and a sweet are illustrated in \Cref{fig:ex-solv}.

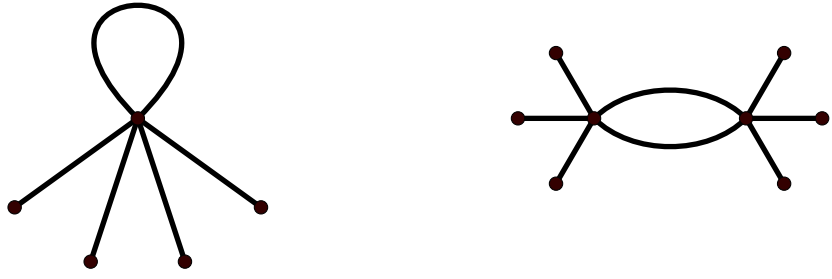
\begin{figure}
	\begin{center}
		\definecolor{ttqqqq}{rgb}{0.2,0,0}
		\begin{tikzpicture}[line cap=round,line join=round,x=1cm,y=1cm]
		
		\draw [line width=2pt] (0,0)-- (-1.62,-1.18);
		\draw [line width=2pt] (0,0)-- (-0.62,-1.9);
		\draw [line width=2pt] (0,0)-- (+1.62,-1.18);
		\draw [line width=2pt] (0,0)-- (+0.62,-1.9);
		\draw [line width=2pt, color=black](0,0) .. controls (-2,2) and (2,2) .. (0,0);
		\draw [fill=ttqqqq] (0,0) circle (2.5pt);
		\draw [fill=ttqqqq] (-1.62,-1.18) circle (2.5pt);
		\draw [fill=ttqqqq] (-0.62,-1.9) circle (2.5pt);
		\draw [fill=ttqqqq] (+1.62,-1.18) circle (2.5pt);
		\draw [fill=ttqqqq] (+0.62,-1.9) circle (2.5pt);
		
		\draw [line width=2pt] (0+6,0) .. controls (0.5+6,0.5) and (1.5+6,0.5) .. (2+6,0);
		\draw [line width=2pt] (0+6,0) .. controls (0.5+6,-0.5) and (1.5+6,-0.5) .. (2+6,0);
		\draw [line width=2pt, color=black] (2+6,0)-- (2+0.5+6,0+1.732/2);
		\draw [line width=2pt, color=black] (2+6,0)-- (2+0.5+6,0-1.732/2);
		\draw [line width=2pt, color=black] (2+6,0)-- (2+1+6,0);
		\draw [line width=2pt, color=black] (0+6,0)-- (-0.5+6,1.732/2);
		\draw [line width=2pt, color=black] (0+6,0)-- (-0.5+6,-1.732/2);
		\draw [line width=2pt, color=black] (0+6,0)-- (-1+6,0);
		\draw [fill=ttqqqq] (0+6,0) circle (2.5pt);
		\draw [fill=ttqqqq] (2+6,0) circle (2.5pt);
		\draw [fill=ttqqqq] (2+0.5+6,0+1.732/2) circle (2.5pt);
		\draw [fill=ttqqqq] (2+0.5+6,0-1.732/2) circle (2.5pt);
		\draw [fill=ttqqqq] (2+1+6,0) circle (2.5pt);
		\draw [fill=ttqqqq] (-0.5+6,1.732/2) circle (2.5pt);
		\draw [fill=ttqqqq] (-0.5+6,-1.732/2) circle (2.5pt);
		\draw [fill=ttqqqq] (-1+6,0) circle (2.5pt);
		
		\end{tikzpicture}
	\end{center}
	\caption{Examples of solvable normal forms.}
	\label{fig:ex-solv}
\end{figure}

\Cref{prop:normal-forms} is restated in the following corollary.

\begin{cor}
    \label{cor:solv_self-inverse}
    If a configuration $S$ in a self-inverse chessboard is such that its inverse-dual graph $\Theta_S$ has all connected components solvable, then it is Nielsen equivalent to a left-right diagonal configuration in that chessboard.
\end{cor}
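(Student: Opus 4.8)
The plan is to prove the statement one connected component of $\Theta_S$ at a time and then assemble, quoting the normal-form classification established above.

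First I would record that distinct connected components of $\Theta_S$ are supported on disjoint sets of vertices, and that a vertex $v_i$ of $\Theta_S$ simultaneously records the column $a_iH$ and the row $Ha_i^{-1}$ of the self-inverse chessboard; hence distinct components occupy pairwise disjoint collections of rows and of columns. Since an L-spin only replaces one of three elements by a product lying in the rows and columns already spanned by those three, and a loop shift only moves a loop between the two endpoints of an existing edge, any simple move carried out inside one component leaves every element of the other components --- and hence the property of being left-right diagonal on them --- untouched. So it is enough to prove the claim for a single connected, solvable $\Theta_S$, and then apply it in turn to each component; because the components sit in disjoint rows and columns, a configuration that is left-right diagonal on each component is left-right diagonal in the whole chessboard.

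Next, assuming $\Theta_S$ is connected and solvable, I would use \Cref{prop:os-nf} to move $\Theta_S$ by simple moves to its normal form, which by the definition of solvability is an octopus with an even number of legs or a sweet with equal numbers of sticks on the two sides. By \Cref{prop:normal-forms} these are precisely the normal forms of the odd and even cycles, and that proposition also supplies, for each such normal form, a further sequence of simple moves taking it to a configuration whose inverse-dual graph is a single directed cycle --- equivalently, a left-right diagonal configuration (using the characterisation that a configuration in a self-inverse chessboard is left-right diagonal exactly when $\Theta_S$ is a disjoint union of directed cycles). Concatenating the two sequences, $\Theta_S$ is simply equivalent to a left-right diagonal configuration. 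Finally I would note that every simple move is a finite composition of Nielsen moves --- inversions by definition, L-spins by \Cref{lem:above}, loop shifts by \Cref{prop:loop-shift} --- so ``simply equivalent'' upgrades to ``Nielsen equivalent'', which completes the argument.

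I expect no genuinely new difficulty here: the real content has already been absorbed into \Cref{prop:os-nf} and \Cref{prop:normal-forms}, and this corollary is mostly assembly. The two points that do need a careful word are the fact that simple moves respect the partition of $\Theta_S$ into connected components (so the components can be treated independently and then recombined), and checking that the notion of a solvable connected inverse-dual graph is exactly accounted for by the forms listed in \Cref{prop:normal-forms}.
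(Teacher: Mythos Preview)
Your proposal is correct and follows essentially the same approach as the paper, which treats the corollary as an immediate restatement of \Cref{prop:normal-forms} without giving a separate argument. You simply spell out the routine details the paper leaves implicit: reducing to connected components (which is justified by \Cref{prop:sweets_inv}), invoking \Cref{prop:os-nf} and \Cref{prop:normal-forms} on each, and recording that simple moves are compositions of Nielsen moves.
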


\subsubsection{Additional solvable configuration}
\label{sec:odd-octopi}
 We now know that octopuses with an odd number of legs and sweets with bipartite components of different sizes cannot be solved with simple moves (inversions, L-spins and loop shifts). However, we are able to weaken the condition of \Cref{cor:solv_self-inverse}.

For brevity, from now on we refer to configurations that can be transformed with simple moves to octopuses with even or odd number of legs, or sweets with equal number of sticks on each side as \emph{even octopi}, \emph{odd octopi} and \emph{equal sweets} respectively.

\begin{prop}
    \label{prop:additional-solv}
    Let $S$ be a union of a singleton configuration and a configuration $S'$ in a self-inverse chessboard $HgH$ which satisfy following conditions.
    \begin{enumerate}
        \item The singleton configuration consists of an element $h$ lying in $H$;
        \item $S'$ is such that all of its connected components are solvable or are odd octopuses;
        \item (possibly after inversions) $S'$ is left-diagonal.
    \end{enumerate}
    
    Then, $S$ is Nielsen-equivalent to a left-right diagonal configuration with one element in $H$ and the rest in $HgH$.
\end{prop}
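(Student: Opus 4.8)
The plan is to peel off the two kinds of connected components of $S'$ separately, handling the solvable ones first and then consuming the spare element $h$ to repair the odd octopuses one at a time.

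\emph{Solvable components.} For every connected component of $S'$ that is solvable, I would apply \Cref{cor:solv_self-inverse}: a sequence of simple moves (all internal to that component, hence affecting neither $h$ nor the other components) turns it into a union of disjoint directed cycles, i.e.\ a left-right diagonal configuration sitting inside $HgH$. These components are then done and are left untouched for the rest of the argument.

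\emph{Normalizing the odd octopuses.} Next I would put each odd-octopus component into its normal form via simple moves (\Cref{prop:os-nf}). Here conditions (2) and (3) combine to force each such component to be a \emph{single}-headed octopus: left-diagonality of $S'$ says every vertex of the inverse-dual graph has in-degree at most $1$, which rules out a base carrying two or more loops, and at least one head is forced because an octopus normal form is never bipartite. Write $b$ for the head element (so $b$ has $H$-exponent $2$ and lies on the diagonal of the self-inverse chessboard at the base vertex $v_0$) and $c_1,\dots,c_{2m+1}$ for the leg elements, all lying in the row of $v_0$.

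\emph{The key move, iterated.} Take the current spare element $h_0\in H$ (initially $h_0=h$). The idea is to push $h_0$ into $HgH$ by a Nielsen move such as $h_0\mapsto bh_0$ or $h_0\mapsto h_0b$; since $bH=a_0H$ and $Hb=Ha_0^{-1}$, the resulting element lands in the column (resp.\ row) of $v_0$, so the inverse-dual graph gains a single edge incident to $v_0$. When that edge lands on an \emph{existing} vertex of the graph, the component becomes an octopus with $2m+1$ legs and $2$ heads; being connected, non-bipartite, with $2m+2$ vertices and $2m+3$ edges, this has the same normal form as a $(2m+2)$-cycle together with a loop, and I would transform it into that form by simple moves. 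Then I extract the loop element back into $H$: it shares a column with the unique cycle-edge entering its vertex, so ``invert it and multiply by that cycle element'' (two Nielsen moves) produces an element of $H$, and what remains in $HgH$ is the $(2m+2)$-cycle, a left-right diagonal configuration, together with a fresh spare in $H$. Repeating this over all odd-octopus components, each pass consuming one spare and producing one, leaves exactly one element in $H$ and a left-right diagonal configuration filling $HgH$, as required.

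The main obstacle is exactly the hypothesis hidden in the previous paragraph: controlling \emph{where} the element moved out of $H$ lands inside $HgH$. One of its two coordinates (row or column, depending on the chosen move) is pinned to $v_0$, but the other is not, and if it falls on a coset of $HgH$ not yet used by any component, the component degenerates to an even octopus (or a tree) from which there is no extraction back to $H$ without undoing the work. Resolving this is the delicate part, and I expect it to go through the observation that in the ambient setting $S'$ is a full left transversal of $H$ restricted to $HgH$ (as produced by \Cref{prop: left-clean}), so the new edge necessarily lands on a vertex belonging to \emph{some} component of $S'$ — yielding either the two-headed octopus above, or an edge merging the odd octopus with another component, in which case one first extracts the head into $H$ and re-runs the analysis on the merged configuration. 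The remaining work is then routine chessboard bookkeeping, verifying that each of the indicated products lies in the claimed coset.
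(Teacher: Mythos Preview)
Your approach is essentially the paper's, and it is correct. The paper also pushes the spare $h$ onto the head $x_0$ of an odd octopus via $h\mapsto x_0h$, then splits into the two cases you identify: the new edge stays inside the component, or it joins it to another component (your left-diagonality observation that no vertex of $\Theta_{S'}$ has degree~$0$ is exactly what the paper invokes here). The only cosmetic difference is in the same-component case: you reshape to a $(2m{+}2)$-cycle with a pendant loop and pull the loop back into $H$, whereas the paper shifts the extra head along a leg and then cuts that leg, producing two even octopuses rather than one even cycle. Either way one is left with solvable pieces in $HgH$ and a fresh spare in $H$.

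The one genuine gap is termination. In the merge case you write ``re-run the analysis on the merged configuration'', but after merging and extracting a head the resulting single-headed octopus has $2m{+}1{+}k$ legs, where $k$ is the number of vertices of the component you hit; when $k$ is even (e.g.\ you merged with another odd octopus or an equal sweet) this is again an odd octopus, so naive recursion does not obviously terminate. The paper closes this with the counter
\[
C(S)=\#(\text{components of }\Theta_{S'})+2\cdot\#(\text{odd-octopus components}),
\]
which strictly decreases at every step: in the same-component case the number of odd octopuses drops by one, and in the merge case the number of components drops by one while the number of odd octopuses does not increase. Plugging this counter into your outline makes the argument complete.
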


\begin{proof}
    We are going to proceed by repeated use of an algorithm, which can be performed as long as there is some connected component in the form of an odd octopus. Each usage will decrease the value of the following counter:
    $$C(S) = \#(\textrm{connected components of $S$}) + 2\times \#(\textrm{connected components of $S$ being odd octopuses})$$
    thereby ensuring that at some point we are left with no odd octopuses. Then, by \Cref{cor:solv_self-inverse} the configuration can be transformed into a left-right diagonal form via simple moves.
    
    The condition that possibly after inversions $S$ is left-diagonal implies that no vertex in $\Theta_{S}$ has degree $0$. It also enforces the connected components to be either single-headed octopuses or sweets with two edges in the core.
    
    \textbf{Algorithm}
    
    Suppose that there is at least one connected component being an odd octopus, call it $S_0$. Take the element $x_0 \in S_0$ corresponding to the head of $S_0$ (i.e. the unique loop $v_0v_0$, where $v_0$ is the base of $S_0$) and the element $h\in H$, and perform the Nielsen move $h \mapsto x_0h$. Now, there are two options:
    \begin{enumerate}
        \item either $x_0h$ lies in one of the columns corresponding to vertices in $S_0$,
        \item or $x_0h$ lies in a column that doesn't correspond to a vertex in $S_0$.
    \end{enumerate}
    
    In the first case, we get a additional edge in $\Theta_{S_0}$, while the number of vertices remains unchanged, so the normal form of $S_0\cup\set{x_0h}$ is an octopus with two heads and an odd number of legs. The additional head can be shifted to the end (say $v_1$) of one of the legs via a loop shift and then this leg can be cut. Let $x_1$ be the element of $S_0$ corresponding to $v_0v_1$, we perform the Nielsen move $x_0 \mapsto x_0x_1^{-1} \in H$ (where $x_0$ corresponds to the loop) which leaves us with: two even octopuses and an element of $H$. Furthermore, none of the vertices (i.e. none of the rows of $HgH$) became empty, so we are in a situation satisfying the original hypotheses. This procedure is illustrated in \Cref{fig:cut_leg}.
    
    In the second case, we put an edge between two connected components of $\Theta_S$. Since we haven't done anything to the loop at the base of $S_0$, the new component must be a double-headed octopus. Let's transform it to that form and take $x_0, x_1$ to be elements corresponding to the two heads of the octopus. We perform the Nielsen move $x_0 \mapsto x_0x_1^{-1} \in H$, `cutting' one of the heads and thereby getting a single octopus and a single element in $H$. Also, no vertex became of degree $0$, so we have a configuration satisfying the original hypotheses.
    
    \textbf{Counter decreasing}
    
    The effect of both cases is summarised in \Cref{tab:counter}.
    
    \begin{table}[]
        \centering
        \begin{tabular}{c|ccc}
                    & no. components    & no. odd octopuses  & $C(S)$ \\ \hline
            Case 1. & $+1$  & $-1$  & $-1$  \\
            Case 2. & $-1$  & $0$ or $-1$  & $-1$ or $-3$
        \end{tabular}
        \caption{The effect of algorithm on the counter.}
        \label{tab:counter}
    \end{table}
    
    In the first case, the number of connected components increased by $1$ (since we `cut the octopuses into two'), while the number of odd octopuses decreased by $1$, so in total the  counter $C(S)$ decreased by $1$.
    
    In the second case, since the number of connected components decreased by $1$ and the number of odd octopuses remained the same (if we connected to a component of even number of elements) or decreased by $1$ (if we connected to a component with odd number of elements), the counter $C(S)$ decreased by $1$ or $3$.
    
    In both cases we observe the counter $C(S)$ decreasing, so at some point we get to a configuration where it is impossible to continue the algorithm, which is one that doesn't contain any odd octopuses. This is precisely what we wanted to get.
\end{proof}

\begin{figure}
    \begin{center}
    	\definecolor{ttqqqq}{rgb}{0.2,0,0}
    	\begin{tikzpicture}[line cap=round,line join=round,x=1cm,y=1cm, scale=0.5]
    	
    	\draw [line width=2pt] (0,0)-- (-1.62,-1.18);
    	\draw [line width=2pt] (0,0)-- (-0.62,-1.9);
    	\draw [line width=2pt] (0,0)-- (+1.62,-1.18);
    	\draw [line width=2pt] (0,0)-- (+0.62,-1.9);
    	\draw [line width=2pt] (0,0)-- (-2,0);
    	\draw [fill=ttqqqq] (-2,0) circle (2.5pt);
    	\draw [line width=2pt, color=black](0,0) .. controls (0,+2) and (-2,0) .. (0,0);
    	\draw [line width=2pt, color=black](0,0) .. controls (0,+2) and (+2,0) .. (0,0);
    	\draw [fill=ttqqqq] (0,0) circle (2.5pt);
    	\draw [fill=ttqqqq] (-1.62,-1.18) circle (2.5pt);
    	\draw [fill=ttqqqq] (-0.62,-1.9) circle (2.5pt);
    	\draw [fill=ttqqqq] (+1.62,-1.18) circle (2.5pt);
    	\draw [fill=ttqqqq] (+0.62,-1.9) circle (2.5pt);
    	
    	\draw [<-, line width=2pt] (4+1,0) -- (2+1,0);
    	
    	\draw [line width=2pt] (0+7+1+0.5,0)-- (-1.62+7+1+0.5,-1.18);
    	\draw [line width=2pt] (0+7+1+0.5,0)-- (-0.62+7+1+0.5,-1.9);
    	\draw [line width=2pt] (0+7+1+0.5,0)-- (+1.62+7+1+0.5,-1.18);
    	\draw [line width=2pt] (0+7+1+0.5,0)-- (+0.62+7+1+0.5,-1.9);
    	\draw [line width=2pt] (0+7+1+0.5,0)-- (-2+7+1+0.5,0);
    	\draw [fill=ttqqqq] (-2+7+1+0.5,0) circle (2.5pt);
    	\draw [line width=2pt, color=black](0+7+1+0.5,0) .. controls (0+7-2+1+0.5,+2) and (2+7+1+0.5,+2) .. (0+7+1+0.5,0);
    	\draw [line width=2pt, color=black](0+7-2+1+0.5,0) .. controls (0+7-2-2+1+0.5,+2) and (+2+7-2+1+0.5,2) .. (0+7-2+1+0.5,0);
    	\draw [fill=ttqqqq] (0+7+1+0.5,0) circle (2.5pt);
    	\draw [fill=ttqqqq] (-1.62+7+1+0.5,-1.18) circle (2.5pt);
    	\draw [fill=ttqqqq] (-0.62+7+1+0.5,-1.9) circle (2.5pt);
    	\draw [fill=ttqqqq] (+1.62+7+1+0.5,-1.18) circle (2.5pt);
    	\draw [fill=ttqqqq] (+0.62+7+1+0.5,-1.9) circle (2.5pt);
    	
    	\draw [<-, line width=2pt] (4+6+1+3,0) -- (2+6+1+3,0);
    	
    	\draw [line width=2pt] (0+7+7+4-0.5,0)-- (-1.62+7+7+4-0.5,-1.18);
    	\draw [line width=2pt] (0+7+7+4-0.5,0)-- (-0.62+7+7+4-0.5,-1.9);
    	\draw [line width=2pt] (0+7+7+4-0.5,0)-- (+1.62+7+7+4-0.5,-1.18);
    	\draw [line width=2pt] (0+7+7+4-0.5,0)-- (+0.62+7+7+4-0.5,-1.9);
    	\draw [fill=ttqqqq] (-2+7+7+4-0.5,0) circle (2.5pt);
    	\draw [line width=2pt, color=black](0+7+7+4-0.5,0) .. controls (0+7-2+7+4-0.5,+2) and (2+7+7+4-0.5,+2) .. (0+7+7+4-0.5,0);
    	\draw [line width=2pt, color=black](0+7-2+7+4-0.5,0) .. controls (0+7-2-2+7+4-0.5,+2) and (+2+7-2+7+4-0.5,2) .. (0+7-2+7+4-0.5,0);
    	\draw [fill=ttqqqq] (0+7+7+4-0.5,0) circle (2.5pt);
    	\draw [fill=ttqqqq] (-1.62+7+7+4-0.5,-1.18) circle (2.5pt);
    	\draw [fill=ttqqqq] (-0.62+7+7+4-0.5,-1.9) circle (2.5pt);
    	\draw [fill=ttqqqq] (+1.62+7+7+4-0.5,-1.18) circle (2.5pt);
    	\draw [fill=ttqqqq] (+0.62+7+7+4-0.5,-1.9) circle (2.5pt);
    	
    	\end{tikzpicture}
    \end{center}
    \caption{Cutting an odd octopus into two even octopuses.}
    \label{fig:cut_leg}
\end{figure}
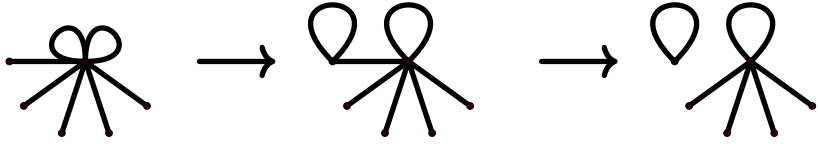

\subsection{Applications of configurations and inverse-dual graphs}\label{sec:applications}

The combined techniques of sections~\ref{sec:non-self-inverse-chessboards} and~\ref{sec:self-inverse-chessboards} give us the following proposition.

\begin{thm}
	\label{cor:final}
	Let $H\leq G$ be a subgroup and $S$ be a left transversal generating $G$. Then, if the following conditions are satisfied, $S$ is Nielsen-equivalent to a left-right transversal.
	\begin{enumerate}
		\item For $g\in G$ such that $HgH \neq Hg^{-1}H$, for $T_g = S_g\cup S_{g^{-1}}^{-1}$, the configuration $T_g$ has only square connected components.
		\item For $g\in G$ such that $HgH = Hg^{-1}H$, the graph $\Theta_{S_g}$ has connected components in the form of any octopuses or equal sweets.
	\end{enumerate}
\end{thm}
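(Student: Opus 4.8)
The plan is to decompose $G$ into double cosets of $H$ and treat each one, or each mutually inverse pair, in isolation, exploiting the fact that all the moves we need --- inversions, L-spins and loop shifts --- only ever involve the elements of $S$ lying inside a single chessboard (or a single inverse pair of chessboards), together with the unique element of $S$ lying in $H$, which will get borrowed and returned. Since $S$ is a left transversal, for each $g$ the section $S_g = HgH\cap S$ is left-diagonal (one element per column of the chessboard $HgH$), and $S$ is the disjoint union of the $S_g$ over double coset representatives $g$. It therefore suffices to Nielsen-transform, chessboard by chessboard, so that each piece becomes left-right diagonal inside its own double coset; because distinct double cosets occupy disjoint sets of left cosets and disjoint sets of right cosets, the union of the transformed pieces is then automatically a left-right transversal.

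First I would dispose of the non-self-inverse chessboards. Fix $g$ with $HgH\neq Hg^{-1}H$. Applying the Nielsen move $x\mapsto x^{-1}$ to each element of $S_{g^{-1}}$ moves that whole section into $HgH$, producing there the configuration $T_g = S_g\cup S_{g^{-1}}^{-1}$. By hypothesis $T_g$ has only square connected components, so by \Cref{prop:solv} it is solvable, i.e. L-spin equivalent to a configuration with exactly two elements in each row and each column; I would perform those L-spins, which are Nielsen moves on the elements of $T_g$ alone. Then \Cref{lem: solvable} partitions $T_g$ into multisets $A,B$ with $A\cup B^{-1}$ left-right diagonal; inverting the elements of $B$ back into $Hg^{-1}H$ leaves $A\subset HgH$ and $B^{-1}\subset Hg^{-1}H$, and since $HgH\cap Hg^{-1}H=\emptyset$ this union is left-right diagonal across the pair. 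This is precisely the per-pair step in the proof of \Cref{cor:trans-solv}, with one extra round of L-spins inserted at the start to pass from ``square components'' to the ``two per row and column'' hypothesis used there.

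Next I would handle the self-inverse chessboards $HgH = Hg^{-1}H$ with $g\notin H$. Since $S_g$ is left-diagonal, every vertex of the inverse-dual graph $\Theta_{S_g}$ has in-degree $1$, so $\Theta_{S_g}$ has as many edges as vertices and each connected component is unicyclic; together with \Cref{prop:os-nf} this forces every octopus component to be single-headed and every sweet component to have core of size $2$. By hypothesis the components are such (single-headed) octopuses --- with an even or an odd number of legs --- or equal sweets; equivalently, each component is either solvable (even octopus or equal sweet) or an odd octopus, which is exactly the hypothesis of \Cref{prop:additional-solv}. Letting $h$ be the unique element of the current multiset lying in $H$ (it exists because we started from a transversal and, as we note below, the element in $H$ is restored after each application), \Cref{prop:additional-solv} Nielsen-transforms $S_g\cup\{h\}$ to a left-right diagonal configuration with one element in $H$ and the rest in $HgH$; when $\Theta_{S_g}$ has no odd octopus component one may instead invoke \Cref{cor:solv_self-inverse} directly, with no need for $h$. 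Because each such step returns an element to $H$, the self-inverse chessboards can be processed one at a time.

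Combining all of these transformations --- applied in any order, as each is supported on a single chessboard or inverse pair plus the roving element of $H$ --- the resulting multiset is left-right diagonal inside every double coset, hence a left-right transversal of $H$ in $G$. I expect the only real subtlety to be this locality bookkeeping: verifying that the chessboard-local procedures genuinely leave the other double cosets untouched (L-spins, loop shifts and inversions never move an element out of its double coset or its inverse pair, and the $H$-element is the single shared resource, always returned), so that fixing one double coset never disturbs another. All the substantive content --- solvability of square configurations (\Cref{prop:solv}), the bipartite-matching argument (\Cref{lem: solvable}), and the odd-octopus algorithm (\Cref{prop:additional-solv}) --- is already available, so the proof is an assembly of these pieces.
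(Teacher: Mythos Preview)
Your proposal is correct and follows essentially the same approach as the paper: handle each inverse pair of non-self-inverse chessboards via \Cref{prop:solv} and \Cref{lem: solvable}, and each self-inverse chessboard via \Cref{prop:additional-solv}. Your write-up is in fact more careful than the paper's about the locality bookkeeping (that the element in $H$ is borrowed and returned so the self-inverse chessboards can be processed sequentially) and about why the left-diagonality of $S_g$ forces single-headed octopuses and $2$-core sweets, points the paper leaves implicit.
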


\begin{proof}
    We look one-by-one at the self-inverse chessboards and pairs of non-self-inverse chessboards which are inverses of each other.
    
    For each pair $HgH$ and $Hg^{-1}H$ of non-self-inverse chessboards, we note that the hypotheses of \Cref{prop:solv} are satisfied, since each connected component of $T_g$ has an equal number of rows and columns, while because $T_g = S_g\cup S_{g^{-1}}^{-1}$ and $S_g$ has exactly one element in each row, while $S_{g^{-1}}^{-1}$ has one element in each column, each connected component must have the number of elements equal to twice the number of rows (and columns).
    
    For a self-inverse chessboard, we can apply \Cref{prop:additional-solv} directly.
    
    After these procedures, for each $g$, the set $S_g$ is diagonal and therefore the whole set $S$ is diagonal.
\end{proof}

We now present two consequences of our results on configurations and inverse-dual graphs. One application is to a situation where the possible chessboards are small enough to not allow non-square connected components, i.e. when $[H:xHx^{-1}\cap H]\leq 2$ for all $x$ (so $H$ is very close to normal in $G$). The other application uses a technique similar to the proof of \Cref{lem:cyclic-nielsen} for finding left-right transversal generating sets for $H\leq G$ when $H\cong C_n$ , i.e. being able to Nielsen transform the trivial element $e$ to any other element of the group.

In each case, we make incremental progress on \Cref{main Q} or \Cref{general Q}, by adding additional hypotheses.

\begin{prop}\label{prop:almost-normal}
    Let $H\leq G$ be a subgroup of finite index such that $\forall x\in G$ we have $ [H:xHx^{-1}\cap H]\leq 2$. Then any generating multiset of size $[G:H]$ can be Nielsen transformed to a left-right transversal.
\end{prop}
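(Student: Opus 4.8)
The plan is to Nielsen-transform $S$ into a left transversal of $H$ and then invoke \Cref{cor:final}, exploiting the fact that the hypothesis on $H$ forces every chessboard of $H$ in $G$ to be tiny. First I would record that the chessboard $HgH$ is a square grid whose side-length equals the number of right cosets of $H$ contained in $HgH$, namely $[H:H\cap gHg^{-1}]$ (see \Cref{sec:graphs}); applying the hypothesis with $x=g$, this is at most $2$, so every chessboard is a $1\times 1$ or $2\times 2$ grid, and $HgH$, $Hg^{-1}H$ always have the same side-length $n\le 2$ (the inversion map being a size-preserving bijection between them that swaps rows and columns). Then, since $|S|=[G:H]$, \Cref{prop: left-clean} shows $S$ is Nielsen-equivalent to a multiset with entries in distinct left cosets; as there are exactly $[G:H]$ left cosets, this multiset is a left transversal of $H$ which still generates $G$, and I relabel it $S$. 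Now $|S\cap H|=1$, and for each double coset the section $S_g=HgH\cap S$ has exactly one element in each column of $HgH$.

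To check hypothesis (1) of \Cref{cor:final}, fix $g$ with $HgH\ne Hg^{-1}H$ and put $T_g=S_g\cup S_{g^{-1}}^{-1}\subset HgH$. Since $S_g$ has one element per column of $HgH$ and $S_{g^{-1}}$ has one element per column of $Hg^{-1}H$, the inverted set $S_{g^{-1}}^{-1}$ has one element per row of $HgH$; hence $T_g$ has $2n$ elements meeting all $n$ rows and all $n$ columns, with $n\in\{1,2\}$. If $n=1$, then $T_g$ is a single square component ($2$ elements in one row and one column). If $n=2$, then $T_g$ has $4$ elements, occupies both rows and both columns, and so has at most two connected components: a single component is square ($4=2\cdot 2$), while with two components each must lie in a single row and a single column, hence contains at most one element of $S_g$ and one of $S_{g^{-1}}^{-1}$, i.e.\ at most $2$ elements, forcing a $2+2$ split in which both components are square ($2=2\cdot 1$). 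By the criterion of \Cref{prop:solv}, all connected components of $T_g$ are square.

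To check hypothesis (2), fix $g$ with $HgH=Hg^{-1}H$. If $HgH$ is $1\times1$, then $S_g$ is a single element on the diagonal, so $\Theta_{S_g}$ is a single loop, a one-headed octopus with no legs. If $HgH$ is $2\times2$, then $\Theta_{S_g}$ has two vertices and two edges, one edge arising from each of the two columns; enumerating the possibilities, $\Theta_{S_g}$ is a double edge (an even cycle, with normal form an equal sweet), or a loop together with a pendant edge (an octopus), or two disjoint loops (two octopuses). In every case the connected components of $\Theta_{S_g}$ are octopuses or equal sweets. Both hypotheses of \Cref{cor:final} then hold, so the relabelled $S$ — and hence the original generating multiset — is Nielsen-equivalent to a left-right transversal of $H$ in $G$, as required.

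The main obstacle is really just the verification in the middle step that a connected component of $T_g$ in a $2\times 2$ non-self-inverse chessboard cannot contain three elements: this is exactly where one uses that $S$ is a genuine transversal (one element per column), which bounds the occupancy of any single box of $T_g$ by two and so rules out the non-square $3+1$ split. Everything else amounts to routine bookkeeping about the handful of possible configurations on a $1\times1$ or $2\times2$ board.
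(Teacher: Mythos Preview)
Your proof is correct and follows essentially the same approach as the paper's: reduce to a left transversal via \Cref{prop: left-clean}, observe that all chessboards are at most $2\times 2$, verify the hypotheses of \Cref{cor:final} by direct enumeration of the possible configurations on such small boards, and conclude. Your case analysis is slightly more explicit than the paper's (you separately handle $1\times1$ non-self-inverse chessboards and spell out the argument ruling out a $3+1$ split of $T_g$), but the structure is identical.
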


\begin{proof}
    We start by taking a generating multiset $S$ of size $[G:H]$ and Nielsen transforming it to a left transversal. All of the chessboards are either $1\times 1$ or $2\times 2$ as by \cite[Proposition 9]{cosetgraph} the size of the chessboard containing $gH$ is $[G : gHg^{-1} \cap H]/[G:H]$. The $1\times 1$ chessboards correspond to the cosets of $H$ in the normaliser, and the elements of $S$ in them already form a diagonal set. Therefore, we'll only consider the $2\times 2$ chessboards from now on.
    
    For the self-inverse chessboards, we are either in the situation of having a diagonal set, or a configuration represented by a single-headed octopus with one leg. In particular, we cannot get a non-equal sweet.
    
    For the non-self inverse chessboards, after moving the elements from $Hg^{-1}H$ to $HgH$ with the inversion map, we get $4$ elements in a $2\times 2$ board, which can either form two connected components (each of size $1\times 1$) or form one connected component of size $2\times 2$. In both cases, these are square.
    
    Now, we can apply \Cref{cor:final} to conclude that $S$ can be Nielsen transformed to a left-right transversal.
\end{proof}

We now provide a proof for \Cref{thm:malnormal}. We note that the hypotheses of this theorem imply that $G$ must be finite. This is due to the fact that infinite groups can't have malnormal groups of finite index. Indeed, taking $H$ malnormal in $G$, if $[G:H]=[G:gHg^{-1}] < \infty$, then we also have $[G:H\cap gHg^{-1}] < \infty$, but $H\cap gHg^{-1} = \set{e}$, so this would imply that $|G|< \infty$.

\vspace{\topsep}
\begin{proof}[Proof of \Cref{thm:malnormal}]
    We start with the observation that for a malnormal subgroup $H\leq G$, the individual boxes in the chessboards other than $H$ correspond to singleton sets, since
    $$|xH\cap Hx| = |xHx^{-1}\cap H| = 1$$
    for $x\not \in H$. This means that whenever two dots end up in the same box, we are able to obtain the trivial element $e$, which can be Nielsen transformed to any other element. Similarly to the previous lemma, we start with a left generating transversal $S$.
    
    Firstly, given a pair $HgH\neq Hg^{-1}H$, we move all of the elements to one of these chessboards by inversion. Then, after transforming the configuration to its normal form (\Cref{defn:config-nf}) we get some number of connected components, each having two dots in its top left corner. We use this to first get $e$ in our multiset and then Nielsen transform $e$ (without using it in the Nielsen moves) to a box which will connect two distinct connected components in that chessboard. After a finite number of such operations we obtain just one connected component in $HgH$, which has to have equal vertical and horizontal dimensions. By \Cref{prop:solv} this is a solvable configuration, so by \Cref{lem: solvable} it is L-spin equivalent to one of the form $A\cup B^{-1}$, were $A \subset HgH$ and $B \subset Hg^{-1}H$ are diagonal sets.
    
    Secondly, given a self-inverse chessboard, i.e., a double coset $HgH=Hg^{-1}H$, by \Cref{prop:normal-forms} the normal forms of connected components of $S_g$ are either octopuses or sweets with core consisting of two edges. In the case of sweets, the two parallel edges in the core represent two elements in the same box, one of which can then be transformed to $e$, which then can be transformed to add a loop on one of the vertices of the inverse-dual graph of the component, thereby changing its normal form from a sweet to an octopus. Repeating the procedure for every self-inverse chessboard, for every connected component whose normal form is a sweet, we get to a situation, where all of components are actually octopuses. Now we can apply \Cref{cor:final} to conclude that $S$ is indeed equivalent to a left-right transversal.
\end{proof}

The procedure also gives us the following quantitative result.

\begin{thm}
	Let $H\leq G$ be a malnormal subgroup such that $\rank(G) \leq [G:H]$. Then:
	$$\rank(G) \leq [G:H] - \frac{\#(\emph{non-self-inverse chessboards})}{2}$$
\end{thm}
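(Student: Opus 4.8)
The plan is to run the algorithm from the proof of \Cref{thm:malnormal}, but instead of consuming every identity element it produces, to retain one copy of the identity from each pair of mutually-inverse non-self-inverse chessboards; since the identity is redundant in any generating multiset, these copies may then be discarded to shrink the generating set.

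First I would take a generating multiset of minimal size $\rank(G)\le[G:H]$ and, by \Cref{prop: left-clean} (adjoining elements from empty columns if $\rank(G)<[G:H]$), pass to a left transversal $S$ with $|S|=[G:H]$ which still generates $G$, as it contains the original generating set. By malnormality, every chessboard other than that of $H$ is an $n\times n$ board with $n=|H|$, each of whose boxes is a singleton coset intersection. Enumerate the pairs $\{HgH,Hg^{-1}H\}$ of distinct mutually-inverse chessboards as $P_1,\dots,P_k$, so $k=\tfrac12\#(\text{non-self-inverse chessboards})$. For each $P_t$, exactly as in the proofs of \Cref{cor:trans-solv} and \Cref{thm:malnormal}, I would use the inversion map to move all elements of $S$ lying in $Hg^{-1}H$ into $HgH$, obtaining the configuration $T_g=S_g\cup S_{g^{-1}}^{-1}$, and then isolate a single connected component $D$ of $T_g$, say occupying $r$ rows and $c$ columns. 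Because $S$ is a left transversal, $S_g$ meets each column of $HgH$ once and $S_{g^{-1}}^{-1}$ meets each row once, so $D$ contains exactly $c$ elements of $S_g$ and $r$ of $S_{g^{-1}}^{-1}$, hence $m:=|D|=r+c$. L-spinning $D$ into its normal form (\Cref{defn:config-nf}) — an operation confined to the rows and columns of $D$, hence leaving the other components and the other chessboards untouched — the corner box then holds $m-(r+c-2)=2$ elements; as this box is a singleton coset, the two coincide as group elements, and a pair of Nielsen moves replaces one of them by the identity $e\in H$.

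Performing this for $P_1,\dots,P_k$ produces a multiset $S'$, Nielsen equivalent to $S$ and hence generating $G$, containing at least $k$ entries equal to $e$ — one from each pair, descended from distinct members of $S$. Deleting these $k$ identities leaves a generating multiset of size $[G:H]-k$, so $\rank(G)\le[G:H]-\tfrac12\#(\text{non-self-inverse chessboards})$. The step needing the most care is verifying that the moves made for one pair of chessboards cannot disturb another (inversions only exchange elements between $HgH$ and $Hg^{-1}H$, while L-spins and the identity-creating move remain inside a single chessboard or box), together with the count $|D|=r+c$ that forces the corner box of the normal form to double up; the remainder follows immediately from the machinery set up for \Cref{thm:malnormal}.
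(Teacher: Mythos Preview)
Your argument is correct, and its core mechanism (push $S_{g^{-1}}$ into $HgH$ by inversion, put a connected component into normal form, observe that the corner box receives two elements which---by malnormality---coincide, and harvest an identity) is exactly the one the paper uses. The counting step $|D|=r+c$ and the resulting corner multiplicity $m-(r+c-2)=2$ are the right checks, and your independence observation (inversions and L-spins for one pair $P_t$ never touch elements in another pair) is what makes the $k$ identities accumulate.

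Where you differ from the paper is in the starting point. The paper first invokes \Cref{thm:malnormal} to pass to a \emph{left-right} transversal; then $S_g$ and $S_{g^{-1}}^{-1}$ are each diagonal, so $T_g$ has exactly two elements per row and per column and every component is square, and \Cref{prop:solv} is cited to see the corner multiplicity is $2$. You bypass \Cref{thm:malnormal} entirely: starting from a mere left transversal, $S_g$ contributes one element per column and $S_{g^{-1}}^{-1}$ one per row, so a component on $r$ rows and $c$ columns has $r+c$ elements even when $r\neq c$, and the corner count still comes out to $2$. This is a genuinely more economical route: it avoids the whole machinery for self-inverse chessboards (octopuses, sweets, loop shifts, \Cref{prop:additional-solv}) that underlies \Cref{thm:malnormal}, and shows the rank bound is really a statement about the non-self-inverse part alone. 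The paper's route, on the other hand, gets the left-right transversal ``for free'' as a by-product, which is convenient if one cares about that conclusion as well.
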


\begin{proof}
    According to \Cref{thm:malnormal}, we can take any generating set of size $[G:H]$ and Nielsen-transform it to a left-right transversal $S$.
    
    Now, for each pair $HgH,Hg^{-1}H$ of non-self inverse chessboards which are inverses to each other, we can use the inversion map to transform elements of the multiset from $Hg^{-1}H$ to $HgH$, forming configurations $S_g \cup S_{g^{-1}}^{-1}$ which are solvable. Now, when we transform these to their normal forms, \Cref{prop:solv} tells us that these normal forms have two dots in the top-left corner.
    
    Thus, denoting by $k$ the number of pairs $HgH,Hg^{-1}H$ of non-self-inverse chessboards, we Nielsen-transformed $S$ to a configuration with at least $k$ occurrences of two elements being in one box. Because $H$ is a malnormal subgroup, each box in a chessboard of $H$ (excluding the chessboard $H$ itself) contains precisely one group element. Thus a pair of elements of $S$ in the same box must be of the form \set{g,g}. Any such pair can be Nielsen transformed in the following way: $\set{g,g} \mapsto \set{g,e}$. Thus, we get $k$ occurrences of the trivial element $e$ in $S$, which don't play a role in generating $G$.
    
    This implies that the minimal size of a generating set, i.e. the rank of $G$, is less or equal to $|S|-k$, which is equal to
    $$[G:H] - \frac{\#(\textrm{non-self-inverse chessboards})}{2}$$
\end{proof}

\noindent \scriptsize{\textsc{King's College, Cambridge,
\\King's Parade, Cambridge, CB2 1ST, UK. 
\\mcc56@cam.ac.uk
\vspace{5pt}
\\King's College, Cambridge,
\\King's Parade, Cambridge, CB2 1ST, UK. 
\\rhc33@cam.ac.uk
\vspace{5pt}
\\Selwyn College, Cambridge,
\\Grange Road, Cambridge,  CB3 9DQ, UK. 
\\ojd30@cam.ac.uk
\vspace{5pt}
\\King's College, Cambridge,
\\King's Parade, Cambridge, CB2 1ST, UK. 
\\ppp24@cam.ac.uk}}

\end{document}